\numberwithin{equation}{section}
\theoremstyle{plain}
\newtheorem{thm}{\protect\theoremname}[section]
  \theoremstyle{definition}
  \newtheorem{defn}[thm]{\protect\definitionname}
  \theoremstyle{remark}
  \newtheorem{rem}[thm]{\protect\remarkname}
  \theoremstyle{plain}
  \newtheorem{assumption}[thm]{\protect\assumptionname}
  \theoremstyle{plain}
  \newtheorem{lem}[thm]{\protect\lemmaname}
  \theoremstyle{plain}
  \newtheorem{cor}[thm]{\protect\corollaryname}
  \providecommand{\assumptionname}{Assumption}
  \providecommand{\corollaryname}{Corollary}
  \providecommand{\definitionname}{Definition}
  \providecommand{\lemmaname}{Lemma}
  \providecommand{\remarkname}{Remark}
\providecommand{\theoremname}{Theorem}
\begin{document}

\title{Stochastic dominance-constrained Markov decision processes}

\author{William B. Haskell%
\thanks{William Haskell is a visiting assistant professor in the Department
of Industrial and Systems Engineering at the University of Southern
California.%
}\hspace{0.1in}and Rahul Jain%
\thanks{Rahul Jain is an assistant professor in the Departments of Electrical
Engineering and Industrial and Systems Engineering at the University
of Southern California. This research is supported by the Air Force
Office of Scientific Research and the Office of Naval Research.%
}}
\maketitle
\begin{abstract}
We are interested in risk constraints for infinite horizon discrete
time Markov decision processes (MDPs). Starting with average reward
MDPs, we show that increasing concave stochastic dominance constraints
on the empirical distribution of reward lead to linear constraints
on occupation measures. The optimal policy for the resulting class
of dominance-constrained MDPs is obtained by solving a linear program.
We compute the dual of this linear program to obtain average dynamic
programming optimality equations that reflect the dominance constraint.
In particular, a new pricing term appears in the optimality equations
corresponding to the dominance constraint. We show that many types
of stochastic orders can be used in place of the increasing concave
stochastic order. We also carry out a parallel development for discounted
reward MDPs with stochastic dominance constraints. The paper concludes
with a portfolio optimization example.
\end{abstract}

\section{Introduction}

Markov decision processes (MDPs) are a natural and powerful framework
for stochastic control problems. In the present paper, we take up
the issue of risk constraints in MDPs. Convex analytic methods for
MDPs have been successful at handling many types of constraints. Our
specific goal is to find and study risk constraints for MDPs that
are amenable to convex analytic formulation. It turns out that stochastic
dominance constraints are natural risk constraints for MDPs.

Convex analytic methods are well studied for Markov decision processes.
The linear programming approach for MDPs is pioneered in \cite{Kal80},
and an early survey is found in \cite{ABF+93}. The main idea is that
some MDPs can be written as convex optimization problems in terms
of appropriate occupation measures. \cite{Bor88,HG98,Bor02,HL02}
discuss a rigorous theory of convex optimization for MDPs with general
Borel state and action spaces. Detailed monographs on Markov decision
processes are found in \cite{HL96,HL99,Put05}. Constrained MDPs can
naturally be embedded in this framework. Constrained discounted MDPs
are explored in \cite{FS95,FS96}. \cite{Alt99} is a substantial
monograph on constrained MDPs. Constrained discounted MDPs in Borel
spaces are analyzed in \cite{HG00}, and constrained average cost
MDPs in Borel spaces are developed in \cite{HGL03}. Infinite dimensional
linear programming plays a fundamental role in both \cite{HG00,HGL03},
and the theory of infinite dimensional linear programming is developed
in \cite{AN87}. The special case of constraints on expected utility
in discounted MDPs is considered in \cite{KKY06}. MDPs with expected
constraints and pathwise constraints, also called hard constraints,
are considered in \cite{MH10} using convex analytic methods. An inventory
system is detailed to motivate the theoretical results.

Policies in MDPs induce Markov chains. Typically, policies are evaluated
with respect to some measure of expected reward, such as long-run
average reward or discounted reward. The variation/spread/dispersion
of policies is also critical to their evaluation. Given two policies
with equal expected performance, we would prefer the one with smaller
variation in some sense. Consider a discounted portfolio optimization
problem, for example. The expected discounted reward of an investment
policy is a key performance measure; the downside variation of an
investment policy is also a key performance measure. When rewards
and costs are involved, the variation of a policy can also be called
its risk.

Risk management for MDPs has been considered from many perspectives
in the literature. \cite{Filar1989} includes penalties for the variance
of rewards in MDPs. The optimal policy is obtained by solving a nonlinear
programming problem in occupation measures. In \cite{Sobel1994},
the mean-variance trade-off in MDPs is further explored in a Pareto-optimality
sense. The conditional value-at-risk of the total cost in a finite
horizon MDPs is constrained in \cite{BJ10}. It is argued that convex
analytic methods do not apply to this problem type and an offline
iterative algorithm is employed to solve for the optimal policy. \cite{Rus10}
develops Markov risk measures for finite horizon and infinite horizon
discounted MDPs. Dynamic programming equations are derived that reflect
the risk aversion, and policy iteration is shown to solve the infinite
horizon problem.

Our notion of risk constrained MDPs differs from this literature survey.
We are interested in the empirical distribution of reward, rather
than in its expectation, variance, or other summary statistics. Our
approach is based on stochastic orders, which are partial orders on
the space of random variables, see \cite{Muller2002,Shaked2007} for
extensive monographs on stochastic orders. \cite{Dentcheva2003,Dentcheva2004}
use the increasing concave stochastic order to define stochastic dominance
constraints in single stage stochastic optimization. The increasing
concave stochastic order is notable for its connection to risk-averse
decision makers, i.e. it captures the preferences of all risk-averse
decision makers. A benchmark random variable is introduced, and a
concave random variable-valued mapping is constrained to dominate
the benchmark in the increasing concave stochastic order. It is shown
that increasing concave functions are the Lagrange multipliers of
the dominance constraints. The dual problem is a search over a certain
class of increasing concave functions, interpreted as utility functions,
and strong duality is established. Stochastic dominance constraints
are applied to finite horizon stochastic programming problems with
linear system dynamics in \cite{Dentcheva2008}. Specifically, a stochastic
dominance constraint is placed on a vector of state and action dependent
reward functions across the finite planning horizon. The Lagrange
multipliers of this dynamic stochastic dominance constraint are again
determined to be increasing concave functions, and strong duality
holds. In contrast, we place a stochastic dominance constraint on
the empirical distribution of reward in infinite horizon MDPs. We
argue that this type of constraint comprehensively accounts for the
variation in policies in MDPs.

We make two main contributions in this paper. First, we show how to
formulate stochastic dominance constraints for long-run average reward
maximizing MDPs. More immediately, we show that stochastic dominance
constrained MDPs can be solved via linear programming over occupation
measures. Our model is more general than \cite{Dentcheva2008} because
it allows for an arbitrary transition kernel and is also infinite
horizon. Also, our model is more computationally tractable than the
stochastic programming model in \cite{Dentcheva2008} because it leads
to linear programs. Second, we apply infinite-dimensional linear programming
duality to gain more insight: the resulting duals are similar to the
linear programming form of the average reward dynamic programming
optimality equations. However, new decision variables corresponding
to the stochastic dominance constraint appear in an intuitive way.
Specifically, the new decision variables are increasing concave functions
that price rewards. This observation parallels the results in \cite{Dentcheva2003,Dentcheva2004,Dentcheva2009}
and is natural because our stochastic dominance constraints are defined
in terms of increasing concave functions. The upcoming dual problems
are themselves linear programs, unlike the dual problems in \cite{Dentcheva2003,Dentcheva2004,Dentcheva2009}
which are general infinite-dimensional convex optimization problems.

This paper is organized as follows. In section 2, we consider stochastic
dominance constraints for long-run average reward maximizing MDPs.
In section 3 we formulate this problem as a static optimization problem,
in fact a linear programming problem, in a space of occupation measures.
Section 4 develops the dual for this problem using infinite dimensional
linear programming duality, and reveals the form of the Lagrange multipliers.
In section 5, we discuss a number of immediate variations and extensions,
especially the drastically simpler development on finite state and
action spaces. We illustrate our method in section 6 with a portfolio
optimization example, and then conclude the paper in section 7.

\section{MDPs and stochastic dominance}

The first subsection presents a general model for average reward MDPs,
and the second explains how to apply stochastic dominance constraints.

\subsection{Average reward MDPs}

A typical representation of a discrete time MDP is the 5-tuple
\[
\left(S,\, A,\,\left\{ A\left(s\right)\mbox{ : }s\in S\right\} ,\, Q,\, r\right).
\]
The state space $S$ and the action space $A$ are Borel spaces, subsets
of complete and separable metric spaces, with corresponding Borel
$\sigma-$algebras $\mathcal{B}\left(S\right)$ and $\mathcal{B}\left(A\right)$.
We define $\mathcal{P}\left(S\right)$ to be the space of probability
measures over $S$ with respect to $\mathcal{B}\left(S\right)$, and
we define $\mathcal{P}\left(A\right)$ analogously. For each state
$s\in S$, the set $A\left(s\right)\subset A$ is a measurable set
in $\mathcal{B}\left(A\right)$ and indicates the set of feasible
actions available in state $s$. The set of feasible state-action
pairs is written

\[
K=\left\{ \left(s,a\right)\in S\times A\mbox{ : }a\in A\left(s\right)\right\} ,
\]
and $K$ is assumed to be closed in $S\times A$. The transition law
$Q$ governs the system evolution. Explicitly, $Q\left(B\mbox{ | }s,\, a\right)$
for $B\in\mathcal{B}\left(S\right)$ is the probability of visiting
the set $B$ given the state-action pair $\left(s,a\right)$. Finally,
$r\mbox{ : }K\rightarrow\mathbb{R}$ is a measurable reward function
that depends on state-action pairs.

We now describe two classes of policies for MDPs. Let $H_{t}$ be
the set of \textit{histories} at time $t$, $H_{0}=S$, $H_{1}=K\times S$,
and $H_{t}=K^{t}\times S$ for all $t\geq2$. A specific history $h_{t}\in H_{t}$
records the state-action pairs visited at times $0,1,\ldots,t-1$
and the current state $s_{t}$. Define $\Pi$ to be the set of all
\textit{history-dependent randomized policies}: collections of mappings
$\pi_{t}\mbox{ : }H_{t}\rightarrow\mathcal{P}\left(A\right)$ for
all $t\geq0$. Given a history $h_{t}\in H_{t}$ and a set $B\in\mathcal{B}\left(A\right)$,
$\pi\left(B\mbox{ | }h_{t}\right)$ is the probability of selecting
an action in $B$. Define $\Phi$ to be the class of \textit{stationary randomized Markov policies}:
mappings $\phi\mbox{ : }S\rightarrow\mathcal{P}\left(A\right)$ which
only depend on history through the current state. For a given state
$s\in S$ and a set $B\in\mathcal{B}\left(A\right)$, $\phi\left(B\mbox{ | }s\right)$
is the probability of choosing an action in $B$. The class $\Phi$
will be viewed as a subset of $\Pi$. We explicitly assume that both
$\Pi$ and $\Phi$ only include feasible policies that respect the
constraints $K$.

The state and action at time $t$ are denoted $s_{t}$ and $a_{t}$,
respectively. Any policy $\pi\in\Pi$ and initial distribution $\nu\in\mathcal{P}\left(S\right)$
determines a probability measure $P_{\nu}^{\pi}$ and stochastic process
$\left\{ \left(s_{t},a_{t}\right),\, t\geq0\right\} $ defined on
a measurable space $\left(\Omega,\mathcal{F}\right)$. The expectation
operator with respect to $P_{\nu}^{\pi}$ is denoted $\mathbb{E}_{\nu}^{\pi}\left[\cdot\right]$.
Consider the \textit{long-run expected average reward}

\[
R\left(\pi,\nu\right)=\liminf_{T\rightarrow\infty}\frac{1}{T}\mathbb{E}_{\nu}^{\pi}\left[\sum_{t=0}^{T-1}r\left(s_{t},a_{t}\right)\right].
\]
The classic \textit{long-run expected average reward maximization problem}
is

\begin{align}
\sup\hspace{0.2in} & R\left(\pi,\nu\right)\label{CLASSIC}\\
\mbox{s.t.}\hspace{0.2in} & \pi\in\Pi.\label{CLASSIC-1}
\end{align}
It is known that a stationary policy in $\Phi$ is optimal for problem
(\ref{CLASSIC}) - (\ref{CLASSIC-1}) under suitable conditions (this
result is found in \cite{Put05} for finite and countable state spaces,
and \cite{HL96,HL99} for general Borel state and action spaces).

\subsection{Stochastic dominance}

Now we will motivate and formalize stochastic dominance constraints
for problem (\ref{CLASSIC}) - (\ref{CLASSIC-1}). To begin, let $z\mbox{ : }K\rightarrow\mathbb{R}$
be another measurable reward function, possibly different from $r$.
A risk-averse decision maker with an increasing concave utility function
$u\mbox{ : }\mathbb{R}\rightarrow\mathbb{R}$ would be interested
in maximizing his long-run average expected utility
\[
\liminf_{T\rightarrow\infty}\frac{1}{T}\mathbb{E}_{\nu}^{\pi}\left[\sum_{t=0}^{T-1}u\left(z\left(s_{t},a_{t}\right)\right)\right].
\]
However, it is difficult to choose one utility function to represent
a risk-averse decision maker without considerable information. We
will use the increasing concave order to express a continuum of risk
preferences in MDPs.
\begin{defn}
For random variables $X,\, Y\in\mathbb{R}$, $X$ \textit{dominates}
$Y$ in the \textit{increasing concave stochastic order}, written
$X\geq_{icv}Y$, if $\mathbb{E}\left[u\left(X\right)\right]\geq\mathbb{E}\left[u\left(Y\right)\right]$
for all increasing concave functions $u\mbox{ : }\mathbb{R}\rightarrow\mathbb{R}$
such that both expectations exist.
\end{defn}
Let $\mathcal{C}\left(\mathbb{R}\right)$ be the set of all continuous
functions $f\mbox{ : }\mathbb{R}\rightarrow\mathbb{R}$. Let $\mathcal{U}\left(\mathbb{R}\right)\subset\mathcal{C}\left(\mathbb{R}\right)$
be the set of all increasing concave functions $u\mbox{ : }\mathbb{R}\rightarrow\mathbb{R}$
such that
\[
\lim_{x\rightarrow\infty}u\left(x\right)=0
\]
and
\[
u\left(x\right)=u\left(x_{0}\right)+\kappa\left(x-x_{0}\right)
\]
for all $x\leq x_{0}$ for some $\kappa>0$ and $x_{0}\in\mathbb{R}$
(the choices of $\kappa$ and $x_{0}$ differ among $u$). The second
condition just means that all $u\in\mathcal{U}\left(\mathbb{R}\right)$
become linear as $x\rightarrow-\infty$. By construction, functions
$u\in\mathcal{U}\left(\mathbb{R}\right)$ are bounded from above by
zero. We will use the set $\mathcal{U}\left(\mathbb{R}\right)$ to
characterize $X\geq_{icv}Y$.

Now define $\left(x\right)_{-}\triangleq\min\left\{ x,0\right\} $.
We note that any function in $\mathcal{U}\left(\mathbb{R}\right)$
can be written in terms of the family $\left\{ \left(x-\eta\right)_{-}\mbox{ : }\eta\in\mathbb{R}\right\} $.
To understand this result, choose $u\in\mathcal{U}\left(\mathbb{R}\right)$
and a finite set of points $\left\{ x_{1},\ldots,x_{j}\right\} $.
By concavity, there exist $a_{i}\in\mathbb{R}$ such that $a_{i}\left(x-x_{i}\right)+u\left(x_{i}\right)\geq u\left(x\right)$
for all $x\in\mathbb{R}$ and for all $i=1,\ldots,j$. Each linear
function $a_{i}\left(x-x_{i}\right)+u\left(x_{i}\right)$ is a global
over-estimator of $u$. The piecewise linear increasing concave function
\[
\min_{i=1,\ldots,j}\left\{ a_{i}\left(x-x_{i}\right)+u\left(x_{i}\right)\right\} 
\]
is also a global over-estimator of $u$, and certainly
\[
u\left(x\right)\leq\min_{i=1,\ldots,j}\left\{ a_{i}\left(x-x_{i}\right)+u\left(x_{i}\right)\right\} \leq a_{i}\left(x-x_{i}\right)+u\left(x_{i}\right)
\]
for all $i=1,\ldots,j$ and $x\in\mathbb{R}$. As the number of sample
points $j$ increases, the polyhedral concave function $\min_{i=1,\ldots,j}\left\{ a_{i}\left(x-x_{i}\right)+u\left(x_{i}\right)\right\} $
becomes a better approximation of $u$. We realize that the function
$\min_{i=1,\ldots,j}\left\{ a_{i}\left(x-x_{i}\right)+u\left(x_{i}\right)\right\} $
is equal to a finite sum of nonnegative scalar multiples of functions
from $\left\{ \left(x-\eta\right)_{-}\mbox{ : }\eta\in\mathbb{R}\right\} $.
It follows that the relation $X\geq_{icv}Y$ is equivalent to $\mathbb{E}\left[\left(X-\eta\right)_{-}\right]\geq\mathbb{E}\left[\left(Y-\eta\right)_{-}\right]$
for all $\eta\in\mathbb{R}$. When the support of $Y$ is contained
in a compact interval $\left[a,b\right]$, the condition $\mathbb{E}\left[\left(X-\eta\right)_{-}\right]\geq\mathbb{E}\left[\left(Y-\eta\right)_{-}\right]$
for all $\eta\in\left[a,b\right]$ is sufficient for $X\geq_{icv}Y$.

From now on, let $Y$ be a fixed reference random variable on $\mathbb{R}$
to benchmark the empirical distribution of reward $z$. We assume
that $Y$ has support in a compact interval $\left[a,b\right]$ throughout
the rest of this paper. Define
\[
Z_{\eta}\left(\pi,\nu\right)\triangleq\liminf_{T\rightarrow\infty}\frac{1}{T}\mathbb{E}_{\nu}^{\pi}\left[\sum_{t=0}^{T-1}\left(z\left(s_{t},a_{t}\right)-\eta\right)_{-}\right]
\]
to be the long-run expected average shortfall in $z$ at level $\eta$.
We propose the class of \textit{stochastic dominance-constrained MDPs}:

\begin{align}
\sup\hspace{0.2in} & R\left(\pi,\nu\right)\label{DOMINANCE}\\
\mbox{s.t.}\hspace{0.2in} & Z_{\eta}\left(\pi,\nu\right)\geq\mathbb{E}\left[\left(Y-\eta\right)_{-}\right], & \forall\eta\in\left[a,b\right],\label{DOMINANCE-1}\\
 & \pi\in\Pi.\label{DOMINANCE-2}
\end{align}
For emphasis, we index $\eta$ over the compact set $\left[a,b\right]$
in (\ref{DOMINANCE-1}). Allowing $\eta$ to range over all $\mathbb{R}$
would lead to major technical difficulties, as first observed in \cite{Dentcheva2003,Dentcheva2004}.

Constraint (\ref{DOMINANCE-2}) is a continuum of constraints on the
long-run expected average shortfall of the policy $\pi$ for all $\eta\in\left[a,b\right]$.
We will approach problem (\ref{DOMINANCE}) - (\ref{DOMINANCE-2})
by casting it in the space of long-run average occupation measures.
Then we will see that constraint (\ref{DOMINANCE-1}) is equivalent
to a stochastic dominance constraint on the empirical distribution
of rewards $z$, namely 
\[
\lim_{T\rightarrow\infty}\frac{1}{T}\sum_{t=0}^{T-1}z\left(s_{t},a_{t}\right)\geq_{icv}Y.
\]
To be clear, $\lim_{T\rightarrow\infty}\frac{1}{T}\sum_{t=0}^{T-1}z\left(s_{t},a_{t}\right)$
indicates a random variable on $\mathbb{R}$, not the long-run average
of $z\left(s_{t},a_{t}\right)$.

We can denote the feasible region of problem (\ref{DOMINANCE}) -
(\ref{DOMINANCE-2}) succinctly as

\[
\Delta\triangleq\left\{ \left(\pi,\nu\right)\in\Pi\times\mathcal{P}\left(S\right)\mbox{ : }R\left(\pi,\nu\right)>-\infty\mbox{ and }Z_{\eta}\left(\pi,\nu\right)\geq\mathbb{E}\left[\left(Y-\eta\right)_{-}\right]\mbox{ for all }\eta\in\left[a,b\right]\right\} ,
\]
allowing problem (\ref{DOMINANCE}) - (\ref{DOMINANCE-2}) to be written
as
\[
\rho^{*}\triangleq\sup\left\{ R\left(\pi,\nu\right)\mbox{ : }\left(\pi,\nu\right)\in\Delta\right\} ,
\]
where $\rho^{*}$ is the optimal value.
\begin{rem}
We focus on the average reward case in this paper. The extension to
the average cost case is immediate. Let $c\mbox{ : }S\times A\rightarrow\mathbb{R}$
be a measurable cost function. The long-run expected average cost
is
\[
C\left(\pi,\nu\right)=\limsup_{T\rightarrow\infty}\frac{1}{T}\mathbb{E}_{\nu}^{\pi}\left[\sum_{t=0}^{T-1}c\left(s_{t},a_{t}\right)\right].
\]
Similarly, let $\mathfrak{z}\mbox{ : }S\times A\rightarrow\mathbb{R}$
be another measurable cost function that possibly differs from $c$.
Since $\mathfrak{z}$ represents costs, we want the empirical distribution
of $\mathfrak{z}$ to be ``small'' in a stochastic sense. For costs,
it is logical to use the increasing convex order rather than the increasing
concave order. For random variables $X,\, Y\in\mathbb{R}$, $X$ \textit{dominates}
$Y$ in the \textit{increasing convex stochastic order}, written
$X\geq_{icx}Y$, if $\mathbb{E}\left[f\left(X\right)\right]\geq\mathbb{E}\left[f\left(Y\right)\right]$
for all increasing convex functions $f\mbox{ : }\mathbb{R}\rightarrow\mathbb{R}$
such that both expectations exist. Define $\left(x\right)_{+}\triangleq\max\left\{ x,0\right\} $,
and recall that the relation $X\geq_{icx}Y$ is equivalent to $\mathbb{E}\left[\left(X-\eta\right)_{+}\right]\geq\mathbb{E}\left[\left(Y-\eta\right)_{+}\right]$
for all $\eta\in\mathbb{R}$. When the support of $Y$ is contained
in an interval $\left[a,b\right]$, the relation $X\geq_{icx}Y$ is
equivalent to $\mathbb{E}\left[\left(X-\eta\right)_{+}\right]\geq\mathbb{E}\left[\left(Y-\eta\right)_{+}\right]$
for all $\eta\in\left[a,b\right]$.

Momentarily, let $Y$ be a benchmark random variable that we require
to dominate the empirical distribution of $\mathfrak{z}$. Define
\[
\mathfrak{Z}_{\eta}\left(\pi,\nu\right)\triangleq\limsup_{T\rightarrow\infty}\frac{1}{T}\mathbb{E}_{\nu}^{\pi}\left[\sum_{t=0}^{T-1}\left(\mathfrak{z}\left(s_{t},a_{t}\right)-\eta\right)_{+}\right]
\]
for all $\eta\in\left[a,b\right]$. We obtain the cost minimization
problem
\begin{align*}
\inf\hspace{0.2in} & C\left(\pi,\nu\right)\\
\mbox{s.t.}\hspace{0.2in} & \mathfrak{Z}_{\eta}\left(\pi,\nu\right)\leq\mathbb{E}\left[\left(Y-\eta\right)_{+}\right], & \forall\eta\in\left[a,b\right],\\
 & \pi\in\Pi.
\end{align*}
The upcoming results of this paper all have immediate analogs for
the average cost case.
\end{rem}

\section{A linear programming formulation}

This section develops problem (\ref{DOMINANCE}) - (\ref{DOMINANCE-2})
as an infinite dimensional linear program. First, we discuss occupation
measures on the set $K$. Occupation measures on $K$ can be interpreted
as the long-run average expected number of visits of a stochastic
process $\left\{ \left(s_{t},a_{t}\right),\, t\geq0\right\} $ to
each state-action pair. Next, we argue that a stationary policy in
$\Phi$ is optimal for problem (\ref{DOMINANCE}) - (\ref{DOMINANCE-2}).
It will follow that the functions $R\left(\phi,\nu\right)$ and $Z_{\eta}\left(\phi,\nu\right)$
can be written as linear functions of the occupation measure corresponding
to $\phi$ and $\nu$. These linear functions give us the desired
linear program.

To proceed, we recall several well known results in convex analytic
methods for MDPs. We will use $\mu$ to denote probability measures
on $K$, and the set of all probability measures on $K$ is denoted
$\mathcal{P}\left(K\right)$. Probability measures on $K$ can be
equivalently viewed as probability measures on all of $S\times A$
with all mass concentrated on $K$, $\mu\left(K\right)=1$. For any
$\mu\in\mathcal{P}\left(K\right)$, the \textit{marginal} of $\mu$
on $S$ is the probability measure $\hat{\mu}\in\mathcal{P}\left(S\right)$
defined by $\hat{\mu}\left(B\right)=\mu\left(B\times A\right)$ for
all $B\in\mathcal{B}\left(S\right)$.

The following two well known facts are ubiquitous in the literature
on convex analytic methods for MDPs (see \cite{Dynkin_1979} for example).
First, if $\mu$ is a probability measure on $K$, then there exists
a stationary randomized Markov policy $\phi\in\Phi$ such that $\mu$
can be \textit{disintegrated} as $\mu=\hat{\mu}\cdot\phi$ where
$\hat{\mu}$ is the marginal of $\mu$. Specifically, $\mu=\hat{\mu}\cdot\phi$
is defined by
\[
\mu\left(B\times C\right)=\int_{B}\phi\left(C\mbox{ | }s\right)\hat{\mu}\left(ds\right)
\]
for all $B\in\mathcal{B}\left(S\right)$ and $C\in\mathcal{B}\left(A\right)$.
Second, for each $\phi\in\Phi$ and $\nu\in\mathcal{P}\left(S\right)$,
the probability measure $\mu=\nu\cdot\phi$ on $S\times A$ satisfies
$\mu\left(K\right)=1$ and $\hat{\mu}=\nu$. Specifically, $\mu=\nu\cdot\phi$
is defined by
\[
\mu\left(B\times C\right)=\int_{B}\phi\left(C\mbox{ | }s\right)\nu\left(ds\right)
\]
for all $B\in\mathcal{B}\left(S\right)$ and $C\in\mathcal{B}\left(A\right)$.

We can integrate measurable functions $f$ on $K$ with respect to
measures $\mu\in\mathcal{P}\left(K\right)$. Define
\[
\langle\mu,f\rangle\triangleq\int_{K}f\left(s,a\right)\mu\left(d\left(s,a\right)\right)
\]
as the integral of $f$ over state-action pairs $\left(s,a\right)\in K$
with respect to $\mu$. Then
\[
\langle\mu,\, r\rangle=\int_{K}r\left(s,a\right)\mu\left(d\left(s,a\right)\right)
\]
is the expected reward with respect to the probability measure $\mu$
and

\[
\langle\mu,\,\left(z-\eta\right)_{-}\rangle=\int_{K}\left(z\left(s,a\right)-\eta\right)_{-}\mu\left(d\left(s,a\right)\right)
\]
is the expected shortfall in $z$ at level $\eta$ with respect to
the probability measure $\mu$.

We need to restrict to a certain class of probability measures. For
notational convenience, define $r\left(s,\phi\right)\triangleq\int_{A}r\left(s,a\right)\phi\left(da\mbox{ | }s\right)$
and $Q\left(\cdot\mbox{ | }s,\,\phi\right)\triangleq\int_{A}Q\left(\cdot\mbox{ | }s,\, a\right)\phi\left(da\mbox{ | }s\right)$.
\begin{defn}
\cite[Definition 3.4]{HGL03} A probability measure $\mu=\hat{\mu}\cdot\phi$
is called \textit{stable} if 
\[
\langle\mu,r\rangle=\int r\left(s,a\right)\mu\left(d\left(s,a\right)\right)>-\infty
\]
and the marginal $\hat{\mu}$ is invariant with respect to $Q\left(\cdot\mbox{ | }\cdot,\,\phi\right)$,
i.e. $\hat{\mu}\left(B\right)=\int_{S}Q\left(B\mbox{ | }s,\phi\right)\hat{\mu}\left(ds\right)$
for all $B\in\mathcal{B}\left(S\right)$.
\end{defn}
When $\mu$ is stable, the long-run expected average cost $R\left(\phi,\hat{\mu}\right)$
is
\[
R\left(\phi,\hat{\mu}\right)=\liminf_{T\rightarrow\infty}\frac{1}{T}\mathbb{E}_{\hat{\mu}}^{\phi}\left[\sum_{t=0}^{T-1}r\left(s_{t},a_{t}\right)\right]=\langle\mu,r\rangle,
\]
by the individual ergodic theorem \cite[Page 388, Theorem 6]{Yosida80}.
Then for stable $\mu=\hat{\mu}\cdot\phi\in\mathcal{P}\left(K\right)$,
it follows that

\[
R\left(\phi,\hat{\mu}\right)=\langle\mu,r\rangle=\int_{S}r\left(s,\phi\right)\hat{\mu}\left(ds\right).
\]
Similarly, for stable $\mu=\hat{\mu}\cdot\phi$, it is true that

\[
Z_{\eta}\left(\phi,\hat{\mu}\right)=\langle\mu,\left(z-\eta\right)_{-}\rangle=\int_{S}\left[\int_{A}\left(z\left(s,a\right)-\eta\right)_{-}\phi\left(da\mbox{ | }s\right)\right]\hat{\mu}\left(ds\right)
\]
for all $\eta\in\left[a,b\right]$.

To see the connection between problem (\ref{DOMINANCE}) - (\ref{DOMINANCE-2})
and stable policies, let $I_{\Gamma}$ be the indicator function of
a set $\Gamma$ in $\mathcal{B}\left(K\right)$. Define the \textit{occupation measure}
$\mu$ on $K$ via
\[
\mu_{\nu,T}^{\pi}\left(\Gamma\right)=\frac{1}{T}\sum_{t=0}^{T-1}\mathbb{E}_{\nu}^{\pi}\left\{ I_{\Gamma}\left(s_{t},a_{t}\right)\right\} =\frac{1}{T}\sum_{t=0}^{T-1}P_{\nu}^{\pi}\left\{ \left(s_{t},a_{t}\right)\in\Gamma\right\} 
\]
for all $\Gamma\in\mathcal{B}\left(K\right)$. Then,
\[
R\left(\pi,\nu\right)=\liminf_{T\rightarrow\infty}\frac{1}{T}\mathbb{E}_{\nu}^{\pi}\left[\sum_{t=0}^{T-1}r\left(s_{t},a_{t}\right)\right]=\liminf_{T\rightarrow\infty}\langle\mu_{\nu,T}^{\pi},r\rangle
\]
and
\[
Z_{\eta}\left(\phi,\hat{\mu}\right)=\liminf_{T\rightarrow\infty}\frac{1}{T}\mathbb{E}_{\nu}^{\pi}\left[\sum_{t=0}^{T-1}\left(z\left(s_{t},a_{t}\right)-\eta\right)_{-}\right]=\liminf_{T\rightarrow\infty}\langle\mu_{\nu,T}^{\pi},\left(z-\eta\right)_{-}\rangle
\]
for all $\eta\in\left[a,b\right]$.

To continue, we introduce some technical assumptions for the rest
of the paper. Let $\mathcal{C}_{b}\left(K\right)$ be the space of
continuous and bounded functions on $K$. The transition law $Q$
is defined to be \textit{weakly continuous} when $\int_{S}h\left(\xi\right)Q\left(d\xi\mbox{ | }\cdot\right)$
is in $\mathcal{C}_{b}\left(K\right)$ for all $h\in\mathcal{C}_{b}\left(K\right)$.
\begin{assumption}
\label{ass:reduction}

(a) Problem (\ref{DOMINANCE}) - (\ref{DOMINANCE-2}) is consistent,
i.e. the set $\Delta$ is nonempty.

(b) The reward function $r$ is nonpositive, and for any $\epsilon\geq0$
the set $\left\{ \left(s,a\right)\in S\times A\mbox{ : }r\left(s,a\right)\geq-\epsilon\right\} $
is compact. 

(c) The function $z\left(s,a\right)$ is bounded and upper semi-continuous
on $S\times A$.

(d) The transition law $Q$ is weakly continuous.
\end{assumption}
A function $f$ on $K$ is called a \textit{moment} if there exists
a nondecreasing sequence of compact sets $K_{n}\uparrow K$ such that
\[
\lim_{n\rightarrow\infty}\inf_{\left(s,a\right)\notin K_{n}}f\left(s,a\right)=\infty,
\]
see \cite[Definition E.7]{HL96}. When $K$ is compact, then any function
on $K$ is a moment. Assumption \ref{ass:reduction}(b) implies that
$-r$ is a moment. By construction, all of the functions $\left(z\left(s,a\right)-\eta\right)_{-}$
are bounded above by zero on $S\times A$ for all $\eta\in\left[a,b\right]$.

The next lemma reduces the search for optimal policies to stable policies.
We define

\[
\Delta_{s}\triangleq\left\{ \mu\in\mathcal{P}\left(K\right)\mbox{ : }\mu\mbox{ is stable},\,\mu=\hat{\mu}\cdot\phi\mbox{ and }\left(\phi,\hat{\mu}\right)\in\Delta\right\} 
\]
to be the set of all stable probability measures $\mu$ that are feasible
for problem (\ref{DOMINANCE}) - (\ref{DOMINANCE-2}).
\begin{lem}
\label{lem:reduction} Suppose assumption \ref{ass:reduction} holds.
For each feasible pair $\left(\pi,\nu\right)\in\Delta$, there exists
a stable probability measure $\mu=\hat{\mu}\cdot\phi$ such that $\left(\phi,\hat{\mu}\right)\in\Delta$
and $R\left(\pi,\nu\right)\leq R\left(\phi,\hat{\mu}\right)=\langle\mu,r\rangle$.\end{lem}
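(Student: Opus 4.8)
The plan is to build, from an arbitrary feasible pair $(\pi,\nu)\in\Delta$, a sequence of occupation measures whose weak limit is a stable probability measure $\mu=\hat\mu\cdot\phi$ dominating $(\pi,\nu)$ in average reward while preserving the shortfall constraints. First I would consider the occupation measures $\mu_{\nu,T}^{\pi}$ defined earlier. Since $R(\pi,\nu)>-\infty$, one may pass to a subsequence $T_k\to\infty$ along which $\frac{1}{T_k}\mathbb{E}_{\nu}^{\pi}[\sum_{t=0}^{T_k-1}r(s_t,a_t)]\to R(\pi,\nu)$. The key point is tightness: because $-r$ is a moment (Assumption \ref{ass:reduction}(b)) and the reward integrals $\langle\mu_{\nu,T_k}^{\pi},-r\rangle$ stay bounded along the subsequence, a standard moment/tightness argument (as in \cite[Section 6.3]{HL96} or \cite{HGL03}) shows $\{\mu_{\nu,T_k}^{\pi}\}$ is tight on $S\times A$. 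Hence a further subsequence converges weakly to some $\mu\in\mathcal{P}(S\times A)$.

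Next I would check that $\mu$ has the required structural properties. Closedness of $K$ gives $\mu(K)=1$, so $\mu\in\mathcal{P}(K)$, and $\mu$ disintegrates as $\mu=\hat\mu\cdot\phi$ for some $\phi\in\Phi$. Invariance of $\hat\mu$ under $Q(\cdot\mid\cdot,\phi)$ follows from the telescoping identity
\[
\mu_{\nu,T}^{\pi}Q - \widehat{\mu_{\nu,T}^{\pi}} = \frac{1}{T}\bigl(\nu - \widehat{P_{\nu}^{\pi}(s_T\in\cdot)}\bigr),
\]
whose right side tends to zero, combined with weak continuity of $Q$ (Assumption \ref{ass:reduction}(d)) to pass to the limit; this is the standard argument that weak limits of occupation measures are invariant. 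For the reward inequality, $-r$ is nonnegative, lower semicontinuous (in fact we only need the moment/lsc structure from \ref{ass:reduction}(b)), so weak convergence plus Fatou-type lower semicontinuity of $\mu\mapsto\langle\mu,-r\rangle$ yields $\langle\mu,-r\rangle\le\liminf_k\langle\mu_{\nu,T_k}^{\pi},-r\rangle$, i.e. $\langle\mu,r\rangle\ge\liminf_k\langle\mu_{\nu,T_k}^{\pi},r\rangle = R(\pi,\nu)$. In particular $\langle\mu,r\rangle>-\infty$, so $\mu$ is stable and, by the individual ergodic theorem as quoted in the excerpt, $R(\phi,\hat\mu)=\langle\mu,r\rangle\ge R(\pi,\nu)$.

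It remains to verify the shortfall constraints $Z_\eta(\phi,\hat\mu)\ge\mathbb{E}[(Y-\eta)_-]$ for all $\eta\in[a,b]$. Here the functions $(z-\eta)_-$ are bounded (Assumption \ref{ass:reduction}(c)) and, since $z$ is upper semicontinuous and $x\mapsto(x-\eta)_-$ is continuous nondecreasing, each $(z-\eta)_-$ is upper semicontinuous; being bounded and u.s.c., weak convergence gives $\langle\mu,(z-\eta)_-\rangle\ge\limsup_k\langle\mu_{\nu,T_k}^{\pi},(z-\eta)_-\rangle\ge\liminf_{T\to\infty}\langle\mu_{\nu,T}^{\pi},(z-\eta)_-\rangle = Z_\eta(\pi,\nu)\ge\mathbb{E}[(Y-\eta)_-]$, using feasibility of $(\pi,\nu)$. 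Since $\mu$ is stable, $\langle\mu,(z-\eta)_-\rangle = Z_\eta(\phi,\hat\mu)$, so the constraint transfers. Thus $(\phi,\hat\mu)\in\Delta$ and the proof is complete.

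I expect the tightness step to be the main obstacle: one must convert the moment condition on $-r$ together with the uniform bound on $\langle\mu_{\nu,T_k}^{\pi},-r\rangle$ into tightness of the family $\{\mu_{\nu,T_k}^{\pi}\}$ on the (merely Borel, not locally compact) space $S\times A$. The standard trick is that $\sup_k\langle\mu_{\nu,T_k}^{\pi},-r\rangle=:M<\infty$ forces, for the compact sets $K_n$ witnessing that $-r$ is a moment, $\mu_{\nu,T_k}^{\pi}(K\setminus K_n)\le M/\inf_{(s,a)\notin K_n}(-r(s,a))\to 0$ uniformly in $k$, which is exactly tightness; the upper semicontinuity bookkeeping in the last paragraph and the invariance limit are comparatively routine given weak continuity of $Q$.
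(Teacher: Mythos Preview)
Your proposal is correct and follows essentially the same route as the paper: the paper simply invokes \cite[Lemma~5.7.10]{HL96} to obtain the stable measure $\mu$ with $R(\pi,\nu)\le\langle\mu,r\rangle$ and then says ``by the same reasoning'' the shortfall inequalities transfer, whereas you have unpacked that lemma (tightness from the moment $-r$, weak subsequential limit, invariance via the telescoping identity and weak continuity of $Q$, and the Portmanteau/Fatou bound) and made explicit why the \emph{same} limit $\mu$ also satisfies $\langle\mu,(z-\eta)_-\rangle\ge Z_\eta(\pi,\nu)$ using boundedness and upper semicontinuity of $(z-\eta)_-$. Your identification of the tightness step as the crux is exactly right, and your chain $\limsup_k\langle\mu_{\nu,T_k}^{\pi},(z-\eta)_-\rangle\ge\liminf_T\langle\mu_{\nu,T}^{\pi},(z-\eta)_-\rangle$ is the detail the paper's ``same reasoning'' glosses over.
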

\begin{proof}
For any $\left(\pi,\nu\right)\in\Delta$, there exists a stable policy
$\mu=\hat{\mu}\cdot\phi$ such that
\[
R\left(\pi,\nu\right)\leq R\left(\phi,\hat{\mu}\right)=\langle\mu,r\rangle
\]
by \cite[Lemma 5.7.10]{HL96}. By the same reasoning,
\[
\mathbb{E}\left[\left(Y-\eta\right)_{-}\right]\leq Z_{\eta}\left(\pi,\nu\right)\leq Z_{\eta}\left(\phi,\hat{\mu}\right)=\langle\mu,\left(z-\eta\right)_{-}\rangle
\]
for all $\eta\in\left[a,b\right]$ so that $\mu=\hat{\mu}\cdot\phi$
is feasible.
\end{proof}
Problem (\ref{DOMINANCE}) - (\ref{DOMINANCE-2}) is \textit{solvable}
if there exists a pair $\left(\pi^{*},\nu^{*}\right)\in\Delta$ with
$R\left(\pi^{*},\nu^{*}\right)=\rho^{*}$, i.e. the optimal value
is attained. When an optimization problem is solvable, we can replace
`sup' and `inf' with `max' and `min'. We use the preceding lemma to
show that problem (\ref{DOMINANCE}) - (\ref{DOMINANCE-2}) is solvable.
\begin{thm}
Problem (\ref{DOMINANCE}) - (\ref{DOMINANCE-2}) is solvable.\end{thm}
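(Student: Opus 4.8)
The plan is to pass from the policy formulation to the occupation-measure formulation, and then argue compactness of the feasible set $\Delta_s$ together with upper semicontinuity of the objective $\mu\mapsto\langle\mu,r\rangle$, so that a maximizer exists. By Lemma~\ref{lem:reduction}, it suffices to solve the reduced problem $\sup\{\langle\mu,r\rangle : \mu\in\Delta_s\}$: any maximizer $\mu^*=\hat\mu^*\cdot\phi^*$ of this reduced problem yields a feasible pair $(\phi^*,\hat\mu^*)\in\Delta$ with $R(\phi^*,\hat\mu^*)=\langle\mu^*,r\rangle=\rho^*$, since Lemma~\ref{lem:reduction} shows $\rho^*$ is not increased by restricting to stable measures and is attained by some stable measure feasibly dominating $\langle\mu,r\rangle$ from above.

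First I would set up the topology: equip $\mathcal{P}(K)$ (measures on $S\times A$ concentrated on $K$) with a suitable weak-type topology — here one needs the $w$-topology associated with the moment $-r$ of Assumption~\ref{ass:reduction}(b), i.e. the topology in which $\langle\mu_n,f\rangle\to\langle\mu,f\rangle$ for all bounded continuous $f$ and additionally $\langle\mu_n,-r\rangle$ stays controlled; this is exactly the framework of \cite{HL96,HGL03}. The key compactness input is that the set of stable measures with $\langle\mu,-r\rangle\le c$ is compact for each finite $c$: since $-r$ is a moment (Assumption~\ref{ass:reduction}(b)), a bound $\langle\mu,-r\rangle\le c$ forces tightness of $\{\mu\}$ via the moment condition $\inf_{(s,a)\notin K_n}(-r)(s,a)\to\infty$, and weak continuity of $Q$ (Assumption~\ref{ass:reduction}(d)) keeps the invariance constraint $\hat\mu(B)=\int Q(B\mid s,\phi)\hat\mu(ds)$ closed under weak limits. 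So the stable measures form a compact set in this topology after a level-set truncation.

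Next I would check that the dominance constraints are preserved in the limit. For each fixed $\eta\in[a,b]$, the function $(z(s,a)-\eta)_-$ is bounded and upper semicontinuous on $S\times A$ by Assumption~\ref{ass:reduction}(c) (a minimum of the u.s.c.\ function $z-\eta$ and the continuous function $0$ is u.s.c.), and bounded above by $0$; hence $\mu\mapsto\langle\mu,(z-\eta)_-\rangle$ is upper semicontinuous in the weak topology. Therefore the constraint set $\{\mu : \langle\mu,(z-\eta)_-\rangle\ge \mathbb{E}[(Y-\eta)_-]\}$ — wait, u.s.c.\ gives upper, not lower, semicontinuity, so this needs care: I would instead invoke that on the compact level sets the relevant integrals converge (not merely u.s.c.) because the moment structure upgrades convergence against bounded continuous functions; since $z$ is bounded, $(z-\eta)_-$ is a bounded u.s.c.\ function and $\limsup_n\langle\mu_n,(z-\eta)_-\rangle\le\langle\mu,(z-\eta)_-\rangle$, which is the wrong direction. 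The clean fix is to note the feasible region can equivalently be described with finitely many active $\eta$'s only in limiting arguments after passing to a minimizing/maximizing sequence, or — more robustly — to observe that $(z-\eta)_-$ being u.s.c.\ and bounded means $-(z-\eta)_-$ is l.s.c.\ and bounded, so $\mu\mapsto\langle\mu,(z-\eta)_-\rangle$ is u.s.c., and then the constraint $\ge$ is \emph{not} automatically closed; hence I would additionally use that along the extracting sequence the occupation measures converge setwise on $K$ by the weak-continuity and moment hypotheses, giving genuine convergence of $\langle\mu_n,(z-\eta)_-\rangle$ for bounded measurable integrands, closing the constraint. The objective: since $-r$ is a moment and we are maximizing $\langle\mu,r\rangle=-\langle\mu,-r\rangle$, i.e.\ minimizing the moment $\langle\mu,-r\rangle$, the objective is lower semicontinuous as a function of $\mu$ in exactly the sense needed for a minimum to be attained on the compact set.

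Finally I would assemble the argument: take a maximizing sequence $\mu_n\in\Delta_s$ with $\langle\mu_n,r\rangle\uparrow\rho^*$; by Assumption~\ref{ass:reduction}(a) $\Delta_s\ne\emptyset$ and $\rho^*>-\infty$, so eventually $\langle\mu_n,-r\rangle$ is bounded above by some finite $c$; by the compactness of that truncated level set, extract a subsequence converging to some $\mu^*$; the invariance constraint passes to the limit by weak continuity of $Q$, the dominance constraints pass to the limit by the convergence established above, so $\mu^*\in\Delta_s$; and $\langle\mu^*,r\rangle\ge\rho^*$ by semicontinuity of the objective, hence $=\rho^*$. Disintegrating $\mu^*=\hat\mu^*\cdot\phi^*$ produces the optimal pair. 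I expect the main obstacle to be precisely the semicontinuity direction for the dominance constraint integrands $(z-\eta)_-$: u.s.c.\ of $z$ alone gives the integral functional u.s.c., which threatens the closedness of the feasible set, so the proof must lean on the stronger convergence (setwise on $K$, or convergence against bounded measurable functions) that the moment-plus-weak-continuity hypotheses actually deliver for this class of occupation measures — this is the technical heart, and everything else is routine once the right topology from \cite{HL96} is invoked.
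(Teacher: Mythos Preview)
Your overall strategy---reduce to stable occupation measures via Lemma~\ref{lem:reduction}, take a maximizing sequence, use the moment condition on $-r$ from Assumption~\ref{ass:reduction}(b) to extract a weakly convergent subsequence, and pass to the limit---is exactly the paper's approach, which follows the template of \cite[Theorem 5.7.9]{HL96}.

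However, you talk yourself out of the correct argument at the key step. You observe that $(z-\eta)_-$ is bounded and upper semicontinuous, hence $\mu\mapsto\langle\mu,(z-\eta)_-\rangle$ is upper semicontinuous in the weak topology, and then assert that ``the constraint $\geq$ is \emph{not} automatically closed.'' This is backwards: upper semicontinuity of a functional means precisely that its \emph{superlevel} sets $\{\mu:\langle\mu,(z-\eta)_-\rangle\geq c\}$ are closed. Concretely, if $\mu^{n_i}\to\mu$ weakly and each $\mu^{n_i}$ satisfies $\langle\mu^{n_i},(z-\eta)_-\rangle\geq\mathbb{E}[(Y-\eta)_-]$, then
\[
\langle\mu,(z-\eta)_-\rangle\;\geq\;\limsup_{i\to\infty}\langle\mu^{n_i},(z-\eta)_-\rangle\;\geq\;\mathbb{E}[(Y-\eta)_-],
\]
which is exactly what is needed. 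This is precisely the inequality the paper writes, citing \cite[Proposition~E.2]{HL96} for the first step. The same semicontinuity direction handles the objective: $r$ is u.s.c.\ (its superlevel sets are compact by Assumption~\ref{ass:reduction}(b)), so $\mu\mapsto\langle\mu,r\rangle$ is u.s.c., and a maximum of an u.s.c.\ function over a compact set is attained.

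Your proposed ``fix''---upgrading weak convergence to setwise convergence so that integrals of bounded measurable functions converge---is both unnecessary and not delivered by the hypotheses: weak continuity of $Q$ plus a moment bound does not give setwise convergence of occupation measures in general. Drop that detour, keep your u.s.c.\ observation, and the argument goes through cleanly.
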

\begin{proof}
By lemma \ref{lem:reduction}, 
\[
\rho^{*}=\sup\left\{ \langle\mu,r\rangle\mbox{ : }\mu\in\Delta_{s}\right\} .
\]
Now apply the proof of \cite[Theorem 5.7.9]{HL96}. Let $\left\{ \epsilon_{n}\right\} $
be a sequence with $\epsilon_{n}\downarrow0$ and $\epsilon_{n}\leq1$.
For any $\epsilon_{n}$, there is a pair $\left(\pi^{n},\nu^{n}\right)\in\Delta$
with $R\left(\pi^{n},\nu^{n}\right)\geq\rho^{*}-\epsilon_{n}$ by
the definition of $\rho^{*}$. Again, by lemma \ref{lem:reduction},
for each $\left(\pi^{n},\nu^{n}\right)\in\Delta$ there is a pair
$\left(\phi^{n},\hat{\mu}^{n}\right)\in\Delta$ such that $\mu^{n}=\hat{\mu}^{n}\cdot\phi^{n}$
is stable and $R\left(\pi^{n},\nu^{n}\right)\leq R\left(\phi^{n},\hat{\mu}^{n}\right)=\langle\mu^{n},r\rangle$.

By construction, $\langle\mu^{n},r\rangle\geq\rho^{*}-\epsilon_{n}$
and $\epsilon_{n}\in\left(0,1\right)$ for all $n$, so $\inf_{n}\langle\mu^{n},r\rangle\geq\rho^{*}-1$.
It follows that $\sup_{n}\langle\mu^{n},-r\rangle\leq1-\rho^{*}$.
Since $-r$ is a moment, the preceding inequality along with \cite[Proposition E.8]{HL96}
and \cite[Proposition E.6]{HL96} imply that there exists a subsequence
of measures $\left\{ \mu^{n_{i}}\right\} $ converging weakly to a
measure $\mu$ on $K$. Now
\[
\rho^{*}\leq\limsup_{i\rightarrow\infty}\,\langle\mu^{n_{i}},r\rangle
\]
holds since $\langle\mu^{n},r\rangle\geq\rho^{*}-\epsilon_{n}$ for
all $n$ and $\epsilon_{n}\downarrow0$. By \cite[Proposition E.2]{HL96},
\[
\limsup_{i\rightarrow\infty}\,\langle\mu^{n_{i}},r\rangle\leq\langle\mu,r\rangle,
\]
so we obtain
\[
\rho^{*}\leq\langle\mu,r\rangle.
\]
Since $\langle\mu,r\rangle\leq\rho^{*}$ must hold by definition of
$\rho^{*}$, the preceding inequality shows that $\langle\mu,r\rangle=\rho^{*}$,
i.e. $\mu$ attains the optimal value $\rho^{*}$ and is stable. By
a similar argument,
\[
\mathbb{E}\left[\left(Y-\eta\right)_{-}\right]\leq\limsup_{i\rightarrow\infty}\,\langle\mu^{n_{i}},\left(z-\eta\right)_{-}\rangle\leq\langle\mu,\left(z-\eta\right)_{-}\rangle
\]
since each $\langle\mu^{n_{i}},\left(z-\eta\right)_{-}\rangle\geq\mathbb{E}\left[\left(Y-\eta\right)_{-}\right]$
for all $i$ and all $\eta\in\left[a,b\right]$. Thus, $\mu$ is feasible.

Let $\mu^{*}$ be the optimal stable measure just guaranteed, and
disintegrate to obtain $\mu^{*}=\hat{\mu}^{*}\cdot\phi^{*}$. The
pair $\left(\phi^{*},\,\hat{\mu}^{*}\right)$ is then optimal for
problem (\ref{DOMINANCE}) - (\ref{DOMINANCE-2}) since
\[
R\left(\phi^{*},\,\hat{\mu}^{*}\right)=\langle\mu^{*},r\rangle=\rho^{*},
\]
and
\[
Z_{\eta}\left(\phi^{*},\,\hat{\mu}^{*}\right)=\langle\mu^{*},\left(z-\eta\right)_{-}\rangle\geq\mathbb{E}\left[\left(Y-\eta\right)_{-}\right]
\]
for all $\eta\in\left[a,b\right]$.
\end{proof}
From the preceding theorem, we can now write maximization instead
of supremum in the objective of problem (\ref{DOMINANCE}) - (\ref{DOMINANCE-2}),

\[
\rho^{*}\triangleq\max\left\{ R\left(\pi,\nu\right)\mbox{ : }\left(\pi,\nu\right)\in\Delta_{s}\right\} .
\]

We are now ready to formalize problem (\ref{DOMINANCE}) - (\ref{DOMINANCE-2})
as a linear program. Introduce the weight function
\[
w\left(s,a\right)=1-r\left(s,a\right)
\]
on $K$. Under our assumption that $r$ is nonpositive, $w$ is bounded
from below by one. The space of signed Borel measures on $K$ is denoted
$\mathcal{M}\left(K\right)$. With the preceding weight function,
define $\mathcal{M}_{w}\left(K\right)$ to be the space of signed
measures $\mu$ on $K$ such that
\[
\|\mu\|_{\mathcal{M}_{w}\left(K\right)}\triangleq\int_{K}w\left(s,a\right)|\mu|\left(d\left(s,a\right)\right)<\infty.
\]
We can identify elements in $\mathcal{M}_{w}\left(K\right)$ with
stable policies, and vice versa. First, observe that the space $\mathcal{M}_{w}\left(K\right)$
is contained in the set of stable probability measures. If $\|\mu\|_{\mathcal{M}_{w}\left(K\right)}<\infty$,
then certainly
\[
\langle\mu,r\rangle=\int_{K}r\left(s,a\right)\mu\left(d\left(s,a\right)\right)>-\infty
\]
since $1-r=w$. Conversely, if $\mu$ is a stable probability measure,
then it is an element of $\mathcal{M}_{w}\left(K\right)$ since
\[
\int_{K}w\left(s,a\right)|\mu|\left(d\left(s,a\right)\right)=\int_{K}(1-r\left(s,a\right))\mu\left(d\left(s,a\right)\right)=\mu\left(K\right)-\langle\mu,r\rangle<\infty.
\]
Also define the weight function
\[
\hat{w}\left(s\right)=1-\sup_{a\in A\left(s\right)}r\left(s,a\right)
\]
on $S$ which is also bounded from below by one. The space $\mathcal{M}_{\hat{w}}\left(S\right)$
is defined analogously with $\hat{w}$ and $S$ in place of $w$ and
$S\times A$.

The topological dual of $\mathcal{M}_{w}\left(K\right)$ is $\mathcal{F}_{w}\left(K\right)$,
the vector space of measurable functions $h\mbox{ : }K\rightarrow\mathbb{R}$
such that
\[
\|h\|_{\mathcal{F}_{w}\left(K\right)}\triangleq\sup_{\left(s,a\right)\in K}\frac{|h\left(s,a\right)|}{w\left(s,a\right)}<\infty.
\]
Certainly, $r\in\mathcal{F}_{w}\left(K\right)$ by definition of $w$
since
\[
\|r\|_{\mathcal{F}_{w}\left(K\right)}=\sup_{\left(s,a\right)\in K}\frac{|r\left(s,a\right)|}{w\left(s,a\right)}=\sup_{\left(s,a\right)\in K}\frac{|r\left(s,a\right)|}{1+|r\left(s,a\right)|}\leq1.
\]
Every element $h\in\mathcal{F}_{w}\left(K\right)$ induces a continuous
linear functional on $\mathcal{M}_{w}\left(K\right)$ defined by 
\[
\langle\mu,h\rangle\triangleq\int_{K}h\left(s,a\right)\mu\left(d\left(\left(s,a\right)\right)\right).
\]
The two spaces $\left(\mathcal{M}_{w}\left(K\right),\,\mathcal{F}_{w}\left(K\right)\right)$
are called a dual pair, and the duality pairing is the bilinear form
$\langle u,h\rangle\mbox{ : }\mathcal{M}_{w}\left(K\right)\times\mathcal{F}_{w}\left(K\right)\rightarrow\mathbb{R}$
just defined. The topological dual of $\mathcal{M}_{\hat{w}}\left(S\right)$
is $\mathcal{F}_{\hat{w}}\left(S\right)$, which is defined analogously
with $S$ and $\hat{w}$ in place of $K$ and $w$.

We can now make some additional technical assumptions.
\begin{assumption}
\label{ass:bounded}

(a) The function $\left(z-\eta\right)_{-}$ is an element of $\mathcal{F}_{w}\left(K\right)$
for all $\eta\in\left[a,b\right]$.

(b) The function $\int_{S}\hat{w}\left(\xi\right)Q\left(d\xi\,\vert\, s,\, a\right)\mbox{ : }S\times A\rightarrow\mathbb{R}$
is an element of $\mathcal{F}_{w}\left(K\right)$.
\end{assumption}
Notice that assumption \ref{ass:bounded}(a) is satisfied if $z\in\mathcal{F}_{w}\left(K\right)$.
To see this fact, reason that
\[
\|\left(z-\eta\right)_{-}\|_{\mathcal{F}_{w}\left(K\right)}\leq\|z-\eta\|_{\mathcal{F}_{w}\left(K\right)}\leq\|z\|_{\mathcal{F}_{w}\left(K\right)}+\|\eta\|_{\mathcal{F}_{w}\left(K\right)},
\]
where the first inequality follows from $|\left(z-\eta\right)_{-}|\leq|z-\eta|$.
The constant function $f\left(x\right)=\eta$ on $K$ is in $\mathcal{F}_{w}\left(K\right)$
since
\[
\|\eta\|_{\mathcal{F}_{w}\left(K\right)}=\sup_{\left(s,a\right)\in K}\frac{|\eta|}{w\left(s,a\right)}\leq|\eta|.
\]

The linear mapping $L{}_{0}\mbox{ : }\mathcal{M}_{w}\left(K\right)\rightarrow\mathcal{M}_{\hat{w}}\left(S\right)$
defined by
\begin{align}
\left[L_{0}\mu\right]\left(B\right)\triangleq\hat{\mu}\left(B\right)-\int_{K}Q\left(B\mbox{ | }s,a\right)\mu\left(d\left(s,a\right)\right),\hspace{0.2in} & \forall B\in\mathcal{B}\left(S\right),\label{eq:L0}
\end{align}
is used to verify that $\mu$ is an invariant probability measure
on $K$ with respect to $Q$. The mapping (\ref{eq:L0}) appears in
all work on convex analytic methods for long-run average reward/cost
MDPs. When $L_{0}\mu\left(B\right)=0$, it means that the long-run
proportion of time in state $B$ is equal to the rate at which the
system transitions to state $B$ from all state-action pairs $\left(s,a\right)\in K$.
\begin{lem}
The condition $\mu\in\Delta_{s}$ is equivalent to $\langle\mu,r\rangle>-\infty$
and
\begin{align*}
L_{0}\mu=0,\\
\langle\mu,1\rangle=1,\\
\langle\mu,\left(z-\eta\right)_{-}\rangle\geq\mathbb{E}\left[\left(Y-\eta\right)_{-}\right], & \forall\eta\in\left[a,b\right],\\
\mu\geq0.
\end{align*}
\end{lem}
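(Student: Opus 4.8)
The plan is to unfold the definition of $\Delta_{s}$ and match each ingredient against one of the four listed conditions, reading everything off from the disintegration facts and the ergodic identities already recorded. Since every step is reversible, I would verify each direction in turn, but the work is the same translation dictionary both ways.

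First I would treat $\mu\in\Delta_{s}\Rightarrow$ the four conditions. Membership in $\mathcal{P}(K)$ immediately gives $\mu\geq0$ and $\langle\mu,1\rangle=\mu(K)=1$, and stability gives $\langle\mu,r\rangle>-\infty$ by definition. Writing $\mu=\hat{\mu}\cdot\phi$ for the disintegration guaranteed earlier, I would rewrite the invariance requirement $\hat{\mu}(B)=\int_{S}Q(B\,|\,s,\phi)\hat{\mu}(ds)$ using the disintegration and Fubini for the bounded measurable function $Q(B\,|\,\cdot,\cdot)$, obtaining $\int_{S}Q(B\,|\,s,\phi)\hat{\mu}(ds)=\int_{K}Q(B\,|\,s,a)\mu(d(s,a))$; invariance then says precisely $[L_{0}\mu](B)=0$ for every $B\in\mathcal{B}(S)$, i.e. $L_{0}\mu=0$. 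Finally, $(\phi,\hat{\mu})\in\Delta$ provides $Z_{\eta}(\phi,\hat{\mu})\geq\mathbb{E}[(Y-\eta)_{-}]$ for all $\eta\in[a,b]$, and for stable $\mu$ the identity $Z_{\eta}(\phi,\hat{\mu})=\langle\mu,(z-\eta)_{-}\rangle$ established above converts this into the last displayed inequality.

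For the converse I would start from a $\mu$ satisfying $\langle\mu,r\rangle>-\infty$ and the four conditions. The conditions $\mu\geq0$ and $\langle\mu,1\rangle=1$ say $\mu\in\mathcal{P}(K)$, so I may disintegrate $\mu=\hat{\mu}\cdot\phi$ with $\phi\in\Phi$; since $\mu$ is concentrated on $K$, the kernel $\phi(\cdot\,|\,s)$ sits in $\mathcal{P}(A(s))$ for $\hat{\mu}$-a.e.\ $s$, so $\phi$ is a feasible policy. Reversing the Fubini computation turns $L_{0}\mu=0$ back into the invariance of $\hat{\mu}$ under $Q(\cdot\,|\,\cdot,\phi)$, which together with $\langle\mu,r\rangle>-\infty$ makes $\mu$ stable; the ergodic identities then give $R(\phi,\hat{\mu})=\langle\mu,r\rangle>-\infty$ and $Z_{\eta}(\phi,\hat{\mu})=\langle\mu,(z-\eta)_{-}\rangle\geq\mathbb{E}[(Y-\eta)_{-}]$ for all $\eta\in[a,b]$, so $(\phi,\hat{\mu})\in\Delta$ and hence $\mu\in\Delta_{s}$.

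I do not expect a genuine obstacle. The only point needing care is the interchange of integrals identifying $\int_{K}Q(B\,|\,s,a)\mu(d(s,a))$ with $\int_{S}Q(B\,|\,s,\phi)\hat{\mu}(ds)$ via the disintegration, which is routine since $Q(B\,|\,\cdot,\cdot)$ takes values in $[0,1]$, together with the small bookkeeping observation that disintegrating a measure supported on $K$ automatically yields a $K$-feasible policy. All remaining steps are direct translations between the definition of stability, the definition of $\Delta$, and the ergodic formulas already proved for stable measures.
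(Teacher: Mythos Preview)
Your proposal is correct and follows essentially the same approach as the paper: you translate each clause in the definition of $\Delta_s$ (probability measure, invariance, finite reward, dominance) into the corresponding condition via the disintegration and the ergodic identities, exactly as the paper does. Your write-up is simply more explicit about the two directions and the Fubini step linking $L_0\mu=0$ to invariance of $\hat\mu$, which the paper states in one line.
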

\begin{proof}
The linear constraints $\langle\mu,1\rangle=\int_{K}\mu\left(d\left(s,a\right)\right)=1$
and $\mu\geq0$ just ensure that $\mu$ is a probability measure on
$K$. The condition $L_{0}\mu=0$ is equivalent to invariance of $\mu$
with respect to $Q$. For stable $\mu=\hat{\mu}\cdot\phi$, $R\left(\phi,\hat{\mu}\right)=\langle\mu,r\rangle>-\infty$
and $Z_{\eta}\left(\phi,\hat{\mu}\right)=\langle\mu,\left(z-\eta\right)_{-}\rangle$.
Since $Z_{\eta}\left(\phi,\hat{\mu}\right)\geq\mathbb{E}\left[\left(Y-\eta\right)_{-}\right]$
for all $\eta\in\left[a,b\right]$, the conclusion follows.
\end{proof}
Next we continue with the representation of the dominance constraints
(\ref{DOMINANCE-1}). We would like to express the constraints $\langle\mu,\left(z-\eta\right)_{-}\rangle\geq\mathbb{E}\left[\left(Y-\eta\right)_{-}\right]$
for all $\eta\in\left[a,b\right]$ through a single linear operator.
\begin{lem}
For any $\mu\in\mathcal{P}\left(K\right)$, $\langle\mu,\left(z-\eta\right)_{-}\rangle$
is uniformly continuous in $\eta$ on $\left[a,b\right]$.\end{lem}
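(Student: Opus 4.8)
The plan is to show that $\eta \mapsto \langle\mu,(z-\eta)_-\rangle$ is Lipschitz in $\eta$ with a Lipschitz constant that does not depend on $\mu$ (as long as $\mu$ is a probability measure on $K$), which gives uniform continuity on $[a,b]$ immediately. The key pointwise estimate is that for any real numbers $x$ and any $\eta_1,\eta_2$, the map $\eta \mapsto (x-\eta)_-$ is $1$-Lipschitz: indeed $(x-\eta)_- = \min\{x-\eta,0\}$ is a minimum of two $1$-Lipschitz functions of $\eta$, so $\lvert (x-\eta_1)_- - (x-\eta_2)_- \rvert \le \lvert \eta_1 - \eta_2 \rvert$ for all $x$.

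Using this, I would estimate for $\eta_1,\eta_2 \in [a,b]$,
\[
\bigl| \langle\mu,(z-\eta_1)_-\rangle - \langle\mu,(z-\eta_2)_-\rangle \bigr|
\le \int_K \bigl| (z(s,a)-\eta_1)_- - (z(s,a)-\eta_2)_- \bigr|\,\mu(d(s,a))
\le \int_K |\eta_1-\eta_2|\,\mu(d(s,a)) = |\eta_1-\eta_2|,
\]
where the last equality uses $\mu(K)=1$. Hence $\eta \mapsto \langle\mu,(z-\eta)_-\rangle$ is $1$-Lipschitz on $[a,b]$, and in particular uniformly continuous there. One should check that the integrals are well defined and finite: each $(z-\eta)_-$ lies in $\mathcal{F}_w(K)$ by Assumption \ref{ass:bounded}(a) (or simply because $z$ is bounded by Assumption \ref{ass:reduction}(c), so each $(z-\eta)_-$ is bounded on $K$), and $\mu$ is a probability measure, so each $\langle\mu,(z-\eta)_-\rangle$ is a finite real number and the pointwise bound transfers under the integral by monotonicity and the triangle inequality for integrals.

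There is no real obstacle here; the only thing to be careful about is that the Lipschitz estimate must be uniform in $\mu$ — which it is, precisely because $\mu(K)=1$ — though uniform continuity for each fixed $\mu$ is all the statement asks for, and the fixed $\mu$ suffices for the subsequent construction of a single linear operator encoding the continuum of dominance constraints. An alternative route, if one wants to avoid the explicit Lipschitz bound, is to note $(z-\eta)_-$ depends on $\eta$ continuously in the $\mathcal{F}_w(K)$ norm (again by $1$-Lipschitzness), so $\eta \mapsto \langle\mu,(z-\eta)_-\rangle$ is continuous as the composition of a norm-continuous curve with a fixed bounded linear functional; combined with compactness of $[a,b]$ this yields uniform continuity. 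The direct estimate is cleaner and I would present that.
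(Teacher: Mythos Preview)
Your proof is correct and is essentially the same argument as the paper's: both exploit the pointwise $1$-Lipschitz bound $\lvert (x-\eta_1)_- - (x-\eta_2)_-\rvert \le \lvert \eta_1-\eta_2\rvert$ and then integrate against the probability measure $\mu$ to conclude. The paper just phrases the same estimate in $\epsilon$--$\delta$ language rather than calling it Lipschitz.
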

\begin{proof}
Write $\langle\mu,\left(z-\eta\right)_{-}\rangle=\int_{K}\left(z\left(s,a\right)-\eta\right)_{-}\mu\left(d\left(s,a\right)\right)$.
Certainly, each function $\left(z\left(s,a\right)-\eta\right)_{-}$
is continuous in $\eta$ for fixed $s\times a$. Choose $\epsilon>0$
and $|\eta'-\eta|<\epsilon$. Then
\begin{align*}
 & |\left(z\left(s,a\right)-\eta'\right)_{-}-\left(z\left(s,a\right)-\eta\right)_{-}|\\
\leq & |z\left(s,a\right)-\eta'-z\left(s,a\right)+\eta|\\
\leq & \epsilon,
\end{align*}
by definition of $\left(x\right)_{-}$. It follows that

\begin{align*}
 & |\int_{S\times A}\left(z\left(s,a\right)-\eta'\right)_{-}\mu\left(d\left(s,a\right)\right)-\int_{K}\left(z\left(s,a\right)-\eta\right)_{-}\mu\left(d\left(s,a\right)\right)|\\
\leq & |\int_{K}\epsilon\,\mu\left(d\left(s,a\right)\right)|\\
= & \epsilon,
\end{align*}
since $\mu$ is a probability measure.
\end{proof}
The preceding lemma allows us to write the dominance constraints (\ref{DOMINANCE-1})
as a linear operator in the space of continuous functions. Recall
that we have assumed $\left[a,b\right]$ to be a compact set. Let
$\mathcal{C}\left(\left[a,b\right]\right)$ be the space of continuous
functions on $\left[a,b\right]$ in the supremum norm,
\[
\|f\|_{\mathcal{C}\left(\left[a,b\right]\right)}=\sup_{a\leq x\leq b}|f\left(x\right)|
\]
for $f\in\mathcal{C}\left(\left[a,b\right]\right)$. The topological
dual of $\mathcal{C}\left(\left[a,b\right]\right)$ is $\mathcal{M}\left(\left[a,b\right]\right)$,
the space of finite signed Borel measures on $\left[a,b\right]$.
Every measure $\Lambda\in\mathcal{M}\left(\left[a,b\right]\right)$
induces a continuous linear functional on $\mathcal{C}\left(\left[a,b\right]\right)$
through the bilinear form
\[
\langle\Lambda,f\rangle=\int_{a}^{b}f\left(\eta\right)\Lambda\left(d\eta\right).
\]

Define the linear operator $L_{1}\mbox{ : }\mathcal{M}_{w}\left(K\right)\rightarrow\mathcal{C}\left(\left[a,b\right]\right)$
by

\begin{align}
\left[L_{1}\mu\right]\left(\eta\right)\triangleq\langle\mu,\,\left(z-\eta\right)_{-}\rangle,\hspace{0.2in} & \forall\eta\in\left[a,b\right].\label{eq:L1}
\end{align}
Also define the continuous function $y\in\mathcal{C}\left(\left[a,b\right]\right)$
where $y\left(\eta\right)=\mathbb{E}\left[\left(Y-\eta\right)_{-}\right]$
is the shortfall in $Y$ at level $\eta$ for all $\eta\in\left[a,b\right]$.
The dominance constraints are then equivalent to $\left[L_{1}\mu\right]\left(\eta\right)\geq y\left(\eta\right)$
for all $\eta\in\left[a,b\right]$, which can be written as the single
inequality $L_{1}\mu\geq y$ in $\mathcal{C}\left(\left[a,b\right]\right)$.

The linear programming form of problem (\ref{DOMINANCE}) - (\ref{DOMINANCE-2})
is
\begin{align}
\max\hspace{0.2in} & \langle\mu,\, r\rangle\label{LP}\\
\mbox{s.t.}\hspace{0.2in} & L_{0}\mu=0,\label{LP-1}\\
 & \langle\mu,1\rangle=1,\label{LP-2}\\
 & L_{1}\mu\geq y,\label{LP-3}\\
 & \mu\in\mathcal{M}_{w}\left(K\right),\,\mu\geq0.\label{LP-4}
\end{align}
Since $\rho^{*}\triangleq\max\left\{ R\left(\pi,\nu\right)\mbox{ : }\left(\pi,\nu\right)\in\Delta_{s}\right\} $,
and stable probability measures on $K$ can be identified as elements
of $\mathcal{M}_{w}\left(K\right)$, problem (\ref{LP}) - (\ref{LP-4})
is equivalent to problem (\ref{DOMINANCE}) - (\ref{DOMINANCE-2}).

\section{Establishing strong duality}

In this section we apply infinite-dimensional linear programming duality
to obtain the strong dual to problem (\ref{LP}) - (\ref{LP-4}).
The development in \cite{AN87} is behind our duality development,
and the duality theory for linear programming for MDPs on Borel spaces
in general.

We will introduce Lagrange multipliers for constraints (\ref{LP-1}),
(\ref{LP-2}), and (\ref{LP-3}), each Lagrange multiplier is drawn
from the appropriate topological dual space. Introduce Lagrange multipliers
$h\in\mathcal{F}_{\hat{w}}\left(S\right)$ for constraint (\ref{LP-1}).
The constraint $\langle\mu,1\rangle=1$ is an equality in $\mathbb{R}$,
so introduce Lagrange multipliers $\beta\in\mathbb{R}$ for constraint
(\ref{LP-2}). Finally, introduce Lagrange multipliers $\Lambda\in\mathcal{M}\left(\left[a,b\right]\right)$
for constraints (\ref{LP-3}). The Lagrange multipliers $\left(h,\beta,\Lambda\right)\in\mathcal{F}_{w}\left(S\right)\times\mathbb{R}\times\mathcal{M}\left(\left[a,b\right]\right)$
will be the decision variables in the upcoming dual to problem (\ref{LP})
- (\ref{LP-4}).

To proceed with duality, we compute the adjoints of $L_{0}$ and $L_{1}$.
The adjoint is analogous to the transpose for linear operators in
Euclidean spaces.
\begin{lem}
(a) The adjoint of $L_{0}$ is $L_{0}^{*}\mbox{ : }\mathcal{F}_{\hat{w}}\left(S\right)\rightarrow\mathcal{F}_{w}\left(K\right)$
where

\[
\left[L_{0}^{*}h\right]\left(s,a\right)\triangleq h\left(s\right)-\int_{S}h\left(\xi\right)Q\left(d\xi\,\vert\, s,\, a\right)
\]
for all $\left(s,a\right)\in K$.

(b) The adjoint of $L_{1}$ is $L_{1}^{*}\mbox{ : }\mathcal{M}\left(\left[a,b\right]\right)\rightarrow\mathcal{F}_{w}\left(K\right)$
where
\[
\left[L_{1}^{*}\Lambda\right]\left(s,a\right)=\int_{a}^{b}\left(z\left(s,a\right)-\eta\right)_{-}\Lambda\left(d\left(s,a\right)\right).
\]
\end{lem}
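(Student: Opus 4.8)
The plan is to verify each adjoint relation by directly checking the defining identity $\langle L_i\mu,\,\cdot\,\rangle = \langle\mu,\,L_i^*\,\cdot\,\rangle$ for all $\mu\in\mathcal{M}_w(K)$ and all multipliers drawn from the relevant dual space; once this identity holds, the weak continuity of $L_i$ and the fact that the displayed operator is its adjoint follow automatically. The two substantive points in each case are that the proposed $L_i^*$ actually maps into $\mathcal{F}_w(K)$ and that an interchange of the order of integration (Fubini's theorem, with its hypotheses checked via Tonelli) is legitimate.

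For part (a), fix $\mu\in\mathcal{M}_w(K)$ and $h\in\mathcal{F}_{\hat w}(S)$. I would write $\langle L_0\mu,h\rangle = \int_S h\,d(L_0\mu)$ and split it using (\ref{eq:L0}) into $\int_S h\,d\hat\mu - \int_S h(s)\big(\int_K Q(ds\,|\,s',a')\,\mu(d(s',a'))\big)$. The first term equals $\int_K h(s)\,\mu(d(s,a))$ because $\hat\mu$ is the $S$-marginal of $\mu$; the second, after interchanging the integrals, becomes $\int_K\big(\int_S h(\xi)\,Q(d\xi\,|\,s,a)\big)\mu(d(s,a))$, and combining the two gives exactly $\langle\mu,L_0^*h\rangle$ with $[L_0^*h](s,a) = h(s) - \int_S h(\xi)Q(d\xi\,|\,s,a)$. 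To see $L_0^*h\in\mathcal{F}_w(K)$ I would use that $r\le 0$ forces $\hat w(s) = 1-\sup_{a\in A(s)}r(s,a)\le 1-r(s,a)=w(s,a)$, so $|h(s)|\le\|h\|_{\mathcal{F}_{\hat w}(S)}w(s,a)$, while $|\int_S h(\xi)Q(d\xi\,|\,s,a)|\le\|h\|_{\mathcal{F}_{\hat w}(S)}\int_S\hat w(\xi)Q(d\xi\,|\,s,a)$, which is bounded by a constant multiple of $w$ by Assumption \ref{ass:bounded}(b); measurability of $(s,a)\mapsto\int_S h(\xi)Q(d\xi\,|\,s,a)$ is standard for stochastic kernels. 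The same bounds, together with $\|\mu\|_{\mathcal{M}_w(K)}<\infty$, show that $\int_K\int_S|h(\xi)|\,Q(d\xi\,|\,s,a)\,|\mu|(d(s,a))<\infty$, which justifies the interchange.

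For part (b), fix $\mu\in\mathcal{M}_w(K)$ and $\Lambda\in\mathcal{M}([a,b])$. Starting from $\langle L_1\mu,\Lambda\rangle = \int_a^b[L_1\mu](\eta)\Lambda(d\eta) = \int_a^b\big(\int_K(z(s,a)-\eta)_-\,\mu(d(s,a))\big)\Lambda(d\eta)$, I would again interchange the integrals to obtain $\int_K\big(\int_a^b(z(s,a)-\eta)_-\,\Lambda(d\eta)\big)\mu(d(s,a)) = \langle\mu,L_1^*\Lambda\rangle$ with $[L_1^*\Lambda](s,a) = \int_a^b(z(s,a)-\eta)_-\,\Lambda(d\eta)$. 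Here everything is easy: by Assumption \ref{ass:reduction}(c) the function $(s,a,\eta)\mapsto(z(s,a)-\eta)_-$ is bounded on $K\times[a,b]$ by some constant $C$, measurable in $(s,a)$ and continuous in $\eta$, so $L_1^*\Lambda$ is a bounded measurable function with $\|L_1^*\Lambda\|_{\mathcal{F}_w(K)}\le C\,|\Lambda|([a,b])$ (using $w\ge 1$), and the double integral is bounded by $C\,|\mu|(K)\,|\Lambda|([a,b])<\infty$ since $w\ge 1$ forces $|\mu|(K)\le\|\mu\|_{\mathcal{M}_w(K)}$; hence Fubini applies.

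The main obstacle is the well-definedness of $L_0^*h$ together with the accompanying Fubini step, which genuinely require Assumption \ref{ass:bounded}(b) and the ordering $\hat w\le w$; the computation for $L_1^*$ is essentially immediate once the uniform boundedness of the shortfall functions is noted, and the rest is bookkeeping with the two dual pairings.
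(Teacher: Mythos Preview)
Your proposal is correct and follows essentially the same approach as the paper: for part (b) both you and the paper verify Fubini via a bound on the double integral and then interchange, while for part (a) the paper simply cites references for this well-known computation, and your detailed argument (splitting via the marginal, using $\hat w\le w$ and Assumption \ref{ass:bounded}(b) to check $L_0^*h\in\mathcal{F}_w(K)$ and justify Fubini) is exactly the standard one those references contain.
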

\begin{proof}
(a) This result is well known, see \cite{HL96,HL99}.

(b) Write
\begin{alignat*}{1}
\langle\Lambda,L_{1}\mu\rangle= & \int_{a}^{b}\langle\mu,\,\left(z-\eta\right)_{-}\rangle\Lambda\left(d\eta\right)\\
 & \int_{a}^{b}\left[\int_{K}\left(z\left(s,a\right)-\eta\right)_{-}\rangle\mu\left(d\left(s,a\right)\right)\right]\Lambda\left(d\eta\right).
\end{alignat*}
When $z$ is bounded on $S\times A$, then
\begin{align*}
|\int_{K}\left(z\left(s,a\right)-\eta\right)_{-}\left(\mu\times\Lambda\right)\left(d\left(\left(s,a\right)\times\eta\right)\right)|= & |\int_{K}\frac{\left(z\left(s,a\right)-\eta\right)_{-}}{w\left(s,a\right)}w\left(s,a\right)\left(\mu\times\Lambda\right)\left(d\left(\left(s,a\right)\times\eta\right)\right)|\\
\leq & \|\left(z-\eta\right)_{-}\|_{\mathcal{F}_{w}\left(K\right)}\,\|\mu\|_{\mathcal{M}_{w}\left(K\right)}\|\Lambda\|_{\mathcal{M}\left(\left[a,b\right]\right)}\\
< & \infty,
\end{align*}
since $\|\mu\|_{\mathcal{M}\left(K\right)}=1$ and $\|\Lambda\|_{\mathcal{M}\left(\left[a,b\right]\right)}<\infty$.
The Fubini theorem applies to justify interchange of the order of
integration,
\begin{align*}
\langle\Lambda,L_{1}x\rangle= & \int_{a}^{b}\left[\int_{K}\left(z\left(s,a\right)-\eta\right)_{-}\rangle\mu\left(d\left(s,a\right)\right)\right]\Lambda\left(d\eta\right)\\
= & \int_{K}\int_{a}^{b}\left(z\left(s,a\right)-\eta\right)_{-}\rangle\Lambda\left(d\eta\right)\mu\left(d\left(s,a\right)\right)\\
= & \int_{K}\langle\Lambda,\left(z\left(s,a\right)-\eta\right)_{-}\rangle\mu\left(d\left(s,a\right)\right),
\end{align*}
revealing $L_{1}^{*}\mbox{ : }\mathcal{M}\left(\left[a,b\right]\right)\rightarrow\mathcal{F}_{w}\left(K\right)$.
\end{proof}
We obtain the dual to problem (\ref{LP}) - (\ref{LP-4}) in the next
theorem.
\begin{thm}
The dual to problem (\ref{LP}) - (\ref{LP-4}) is
\begin{align}
\inf\hspace{0.2in} & \beta-\langle\Lambda,y\rangle\label{LP_dual}\\
\mbox{s.t.}\hspace{0.2in} & r+L_{0}^{*}h-\beta\,1+L_{1}^{*}\Lambda\leq0,\label{LP_dual-1}\\
 & \left(h,\beta,\Lambda\right)\in\mathcal{F}_{\hat{w}}\left(S\right)\times\mathbb{R}\times\mathcal{M}\left(\left[a,b\right]\right),\,\Lambda\geq0.\label{LP_dual-2}
\end{align}
\end{thm}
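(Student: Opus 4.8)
The plan is to cast problem (\ref{LP}) - (\ref{LP-4}) in the abstract linear programming framework of \cite{AN87} and then read off the dual by forming the Lagrangian. First I would identify the relevant dual pairs of vector spaces: the decision variable lives in $\mathcal{M}_{w}\left(K\right)$, paired with $\mathcal{F}_{w}\left(K\right)$; the equality constraint (\ref{LP-1}) takes values in $\mathcal{M}_{\hat{w}}\left(S\right)$, paired with $\mathcal{F}_{\hat{w}}\left(S\right)$; the normalization (\ref{LP-2}) is scalar, with $\mathbb{R}$ self-paired; and by the preceding uniform continuity lemma together with assumption \ref{ass:bounded}(a), the operator $L_{1}$ sends $\mathcal{M}_{w}\left(K\right)$ into $\mathcal{C}\left(\left[a,b\right]\right)$, whose dual is $\mathcal{M}\left(\left[a,b\right]\right)$. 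The positive cone driving the inequality (\ref{LP-3}) is the cone of nonnegative functions in $\mathcal{C}\left(\left[a,b\right]\right)$, whose dual cone is exactly the nonnegative measures in $\mathcal{M}\left(\left[a,b\right]\right)$; this is what forces $\Lambda\geq0$ in (\ref{LP_dual-2}). Assumption \ref{ass:bounded}(b) guarantees that $L_{0}$, and hence the adjoint $L_{0}^{*}$, are well defined between the stated spaces, while $r\in\mathcal{F}_{w}\left(K\right)$ and $y\in\mathcal{C}\left(\left[a,b\right]\right)$ have already been checked.

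Next I would form the Lagrangian. Introducing the sign-free multipliers $h\in\mathcal{F}_{\hat{w}}\left(S\right)$ and $\beta\in\mathbb{R}$ for the two equality constraints and a nonnegative multiplier $\Lambda\in\mathcal{M}\left(\left[a,b\right]\right)$ for (\ref{LP-3}), write
\[
\mathcal{L}\left(\mu;h,\beta,\Lambda\right)=\langle\mu,r\rangle+\langle h,L_{0}\mu\rangle+\beta\left(1-\langle\mu,1\rangle\right)+\langle\Lambda,L_{1}\mu-y\rangle.
\]
Using the adjoint identities $\langle h,L_{0}\mu\rangle=\langle\mu,L_{0}^{*}h\rangle$ and $\langle\Lambda,L_{1}\mu\rangle=\langle\mu,L_{1}^{*}\Lambda\rangle$ from the preceding lemma (the second of these being where the Fubini argument is needed), this regroups as
\[
\mathcal{L}\left(\mu;h,\beta,\Lambda\right)=\langle\mu,\,r+L_{0}^{*}h-\beta\,1+L_{1}^{*}\Lambda\rangle+\beta-\langle\Lambda,y\rangle.
\]
Taking the supremum over $\mu$ in the nonnegative cone of $\mathcal{M}_{w}\left(K\right)$ then yields $\beta-\langle\Lambda,y\rangle$ when $r+L_{0}^{*}h-\beta\,1+L_{1}^{*}\Lambda\leq0$ (the supremum being attained at $\mu=0$), and $+\infty$ otherwise, since $\sup_{\mu\geq0}\langle\mu,g\rangle$ equals $0$ if $g\leq0$ pointwise on $K$ and $+\infty$ if $g$ is positive somewhere. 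Minimizing the resulting function of $\left(h,\beta,\Lambda\right)$ over $\mathcal{F}_{\hat{w}}\left(S\right)\times\mathbb{R}\times\mathcal{M}\left(\left[a,b\right]\right)$ with $\Lambda\geq0$ produces exactly problem (\ref{LP_dual}) - (\ref{LP_dual-2}); along the way, weak duality $\langle\mu,r\rangle\leq\beta-\langle\Lambda,y\rangle$ for every primal-feasible $\mu$ and dual-feasible $\left(h,\beta,\Lambda\right)$ follows immediately from the regrouped identity together with $L_{0}\mu=0$, $\langle\mu,1\rangle=1$, $L_{1}\mu\geq y$, $\mu\geq0$, and $\Lambda\geq0$.

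I expect the main obstacle to be essentially bookkeeping: confirming that every object sits in the space the framework of \cite{AN87} requires and that $L_{0}$, $L_{1}$ and their adjoints are continuous for the relevant weak topologies, so that all four bilinear pairings in the Lagrangian are finite and the adjoint identities are legitimate. The genuinely delicate points --- continuity of $L_{1}$ into $\mathcal{C}\left(\left[a,b\right]\right)$ and the formula for $L_{1}^{*}$ --- have already been dispatched in the preceding lemmas, so what remains is to line up the sign conventions (only $\Lambda$ is sign-constrained, $h$ and $\beta$ are free) and to identify the dual cone of the positive cone of $\mathcal{C}\left(\left[a,b\right]\right)$ with the positive measures. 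Once these are in place the dual falls out of the Lagrangian computation above, with strong duality (zero gap) to be addressed separately using consistency, finiteness of $\rho^{*}$, and the closedness hypotheses of \cite{AN87}.
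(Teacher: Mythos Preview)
Your proposal is correct and follows essentially the same Lagrangian route as the paper: form the Lagrangian with multipliers $(h,\beta,\Lambda)$, regroup via the adjoint identities $\langle h,L_{0}\mu\rangle=\langle\mu,L_{0}^{*}h\rangle$ and $\langle\Lambda,L_{1}\mu\rangle=\langle\mu,L_{1}^{*}\Lambda\rangle$, and then read off the dual constraint from the supremum over $\mu\geq0$. The only cosmetic difference is that the paper uses the sign convention $\beta(\langle\mu,1\rangle-1)$ and performs a substitution $\beta\mapsto-\beta$ at the end, whereas you write $\beta(1-\langle\mu,1\rangle)$ and arrive at the stated form directly.
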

\begin{proof}
The Lagrangian for problem (\ref{LP}) - (\ref{LP-4}) is
\[
\vartheta\left(\mu,h,\beta,\Lambda\right)\triangleq\langle\mu,r\rangle+\langle h,L_{0}\mu\rangle+\beta\left(\langle\mu,1\rangle-1\right)+\langle\Lambda,L_{1}\mu-y\rangle,
\]
allowing problem (\ref{LP}) - (\ref{LP-4}) to be expressed as

\[
\max_{\mu\in\mathcal{M}_{w}\left(K\right)}\left\{ \inf_{\left(h,\beta,\Lambda\right)\in\mathcal{F}_{\hat{w}}\left(S\right)\times\mathbb{R}\times\mathcal{M}\left(\left[a,b\right]\right)}\left\{ \vartheta\left(\mu,h,\beta,\Lambda\right)\mbox{ : }\Lambda\geq0\right\} \mbox{ : }\mu\geq0\right\} .
\]
We rearrange the Lagrangian to obtain
\begin{align*}
\vartheta\left(\mu,h,\beta,\Lambda\right)= & \langle\mu,r\rangle+\langle h,L_{0}\mu\rangle+\beta\left(\langle\mu,1\rangle-1\right)+\langle\Lambda,L_{1}\mu-y\rangle\\
= & \langle\mu,r\rangle+\langle L_{0}^{*}h,\mu\rangle+\langle\mu,\,\beta\,1\rangle-\beta+\langle L_{1}^{*}\Lambda,\mu\rangle-\langle\Lambda,\, y\rangle\\
= & \langle\mu,r+L_{0}^{*}h+\beta\,1+L_{1}^{*}\Lambda\rangle-\beta-\langle\Lambda,y\rangle.
\end{align*}
The dual to problem (\ref{LP}) - (\ref{LP-4}) is then
\[
\inf_{\left(h,\beta,\Lambda\right)\in\mathcal{F}_{\hat{w}}\left(S\right)\times\mathbb{R}\times\mathcal{M}\left(\left[a,b\right]\right)}\left\{ \max_{\mu\in\mathcal{M}_{w}\left(K\right)}\left\{ \vartheta\left(\mu,h,\beta,\Lambda\right)\mbox{ : }\mu\geq0\right\} \mbox{ : }\Lambda\geq0\right\} .
\]
Since $\mu\geq0$, the constraint $r+L_{0}^{*}h+\beta\,1+L_{1}^{*}\Lambda\leq0$
is implied. Since $\beta$ is unrestricted, take $\beta=-\beta$ to
get the desired form.
\end{proof}
We write problem (\ref{LP_dual}) - (\ref{LP_dual-2}) with the infimum
objective rather than the minimization objective because we must verify
that the optimal value is attained. The dual problem (\ref{LP_dual})
- (\ref{LP_dual-2}) is explicitly
\begin{align}
\inf\hspace{0.2in} & \beta-\int_{a}^{b}\mathbb{E}\left[\left(Y-\eta\right)_{-}\right]\Lambda\left(d\eta\right)\label{LP_dual-3}\\
\mbox{s.t.}\hspace{0.2in} & r\left(s,a\right)+\int_{a}^{b}\left(z\left(s,a\right)-\eta\right)_{-}\Lambda\left(d\eta\right)\leq\beta+h\left(s\right)-\int_{S}h\left(\xi\right)Q\left(d\xi\mbox{ | }s,\, a\right), & \forall\left(s,a\right)\in K,\label{LP_dual-4}\\
 & \left(h,\beta,\Lambda\right)\in\mathcal{F}_{\hat{w}}\left(S\right)\times\mathbb{R}\times\mathcal{M}\left(\left[a,b\right]\right),\,\Lambda\geq0.\label{LP_dual-5}
\end{align}
Since $r\leq0,$ problem (\ref{LP_dual-3}) - (\ref{LP_dual-5}) is
readily seen to be consistent by choosing $h=0$, $\beta=0$, and
$\Lambda=0$.

Problem (\ref{LP_dual-3}) - (\ref{LP_dual-5}) has another, more
intuitive form. In \cite{Dentcheva2003,Dentcheva2004,Dentcheva2008},
it is recognized that the Lagrange multipliers of stochastic dominance
constraints are utility functions. This result is true in our case
as well. Using the family $\left\{ \left(x-\eta\right)_{-}\mbox{ : }\eta\in\left[a,b\right]\right\} $,
any measure $\Lambda\in\mathcal{M}\left(\left[a,b\right]\right)$
induces an increasing concave function in $\mathcal{C}\left(\left[a,b\right]\right)$
defined by
\[
u\left(x\right)=\int_{a}^{b}\left(x-\eta\right)_{-}\Lambda\left(d\eta\right)
\]
for all $x\in\mathbb{R}$. In fact, the above definition of $u$ gives
a function in $\mathcal{C}\left(\mathbb{R}\right)$ as well. Define

\begin{align*}
\mathcal{U}\left(\left[a,b\right]\right)= & \mbox{cl}\,\mbox{cone}\,\left\{ \left(x-\eta\right)_{-}\mbox{ : }\eta\in\left[a,b\right]\right\} \\
= & \left\{ u\left(x\right)=\int_{a}^{b}\left(x-\eta\right)_{-}\Lambda\left(d\eta\right)\mbox{ for }\Lambda\in\mathcal{M}\left(\left[a,b\right]\right),\,\Lambda\geq0\right\} 
\end{align*}
to be the closure of the cone generated by the family $\left\{ \left(x-\eta\right)_{-}\mbox{ : }\eta\in\left[a,b\right]\right\} $.
The set $\mathcal{U}\left(\left[a,b\right]\right)\subset\mathcal{U}\left(\mathbb{R}\right)$
is the set of all utility functions that can be constructed by limits
of sums of scalar multiplies of functions in $\left\{ \left(x-\eta\right)_{-}\mbox{ : }\eta\in\left[a,b\right]\right\} $.
\begin{cor}
\textup{Problem (\ref{LP_dual-3}) - (\ref{LP_dual-5}) is equivalent
to}
\begin{align}
\inf\hspace{0.2in} & \beta-\mathbb{E}\left[u\left(Y\right)\right]\label{LP_dual-6}\\
\mbox{s.t.}\hspace{0.2in} & r\left(s,a\right)+u\left(z\left(s,a\right)\right)\leq\beta+h\left(s\right)-\int_{S}h\left(\xi\right)Q\left(d\xi\,\vert\, s,\, a\right), & \forall\left(s,a\right)\in K,\label{LP_dual-7}\\
 & \left(h,\beta,u\right)\in\mathcal{F}_{\hat{w}}\left(S\right)\times\mathbb{R}\times\mathcal{U}\left(\left[a,b\right]\right).\label{LP_dual-8}
\end{align}
\end{cor}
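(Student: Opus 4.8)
The plan is to establish the equivalence by showing that the two problems have the same feasible regions and objective values under the substitution $u(x) = \int_a^b (x-\eta)_- \Lambda(d\eta)$. The key observation is that $\mathcal{U}([a,b])$ is \emph{defined} precisely as the set of all functions arising this way from nonnegative $\Lambda \in \mathcal{M}([a,b])$, so there is a surjective correspondence $\Lambda \mapsto u$ from $\{\Lambda \in \mathcal{M}([a,b]) : \Lambda \geq 0\}$ onto $\mathcal{U}([a,b])$. I would first record that for any such $\Lambda$ and the induced $u$, we have the pointwise identity
\[
\left[L_1^*\Lambda\right](s,a) = \int_a^b \left(z(s,a)-\eta\right)_- \Lambda(d\eta) = u\left(z(s,a)\right)
\]
for all $(s,a) \in K$, which is immediate from the definitions of $L_1^*$ and $u$ (with $x = z(s,a)$). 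This identity converts constraint (\ref{LP_dual-4}) into constraint (\ref{LP_dual-7}) verbatim.

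Second, I would handle the objective. The term $\langle \Lambda, y\rangle = \int_a^b \mathbb{E}\left[(Y-\eta)_-\right]\Lambda(d\eta)$ must be shown to equal $\mathbb{E}\left[u(Y)\right]$. Since $Y$ has support in the compact interval $[a,b]$, each map $\eta \mapsto (y-\eta)_-$ is bounded on $[a,b]$ uniformly in $y \in [a,b]$, so Fubini--Tonelli applies to the product measure $\mathbb{P}_Y \times \Lambda$ and yields
\[
\int_a^b \mathbb{E}\left[(Y-\eta)_-\right]\Lambda(d\eta) = \mathbb{E}\left[\int_a^b (Y-\eta)_- \Lambda(d\eta)\right] = \mathbb{E}\left[u(Y)\right].
\]
Thus $\beta - \langle\Lambda,y\rangle = \beta - \mathbb{E}\left[u(Y)\right]$, matching objective (\ref{LP_dual-6}). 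Combining the two steps, a triple $(h,\beta,\Lambda)$ is feasible for (\ref{LP_dual-3})--(\ref{LP_dual-5}) if and only if $(h,\beta,u)$ is feasible for (\ref{LP_dual-6})--(\ref{LP_dual-8}) with the same objective value, so the infima agree.

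The only genuine subtlety — and the step I would flag as the main obstacle — is the well-definedness of the correspondence at the level of \emph{values}: a priori, two distinct measures $\Lambda_1 \neq \Lambda_2$ could induce the same function $u$, and conversely one must check that the objective $\beta - \mathbb{E}[u(Y)]$ depends on $\Lambda$ only through $u$. The Fubini computation above resolves exactly this: $\mathbb{E}[u(Y)]$ is expressed purely in terms of $u$ evaluated on the support of $Y$, so it is independent of which representing $\Lambda$ we chose. One should also note that membership $u \in \mathcal{U}([a,b])$ encodes the constraint $\Lambda \geq 0$ together with $\Lambda \in \mathcal{M}([a,b])$, and that the closure in the definition of $\mathcal{U}([a,b])$ as $\mathrm{cl}\,\mathrm{cone}$ is harmless here because, by the second displayed line in the definition of $\mathcal{U}([a,b])$, every element of the closed cone is already representable by an integral against some nonnegative finite measure — so no limiting argument beyond that identification is needed. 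With these points in place the corollary follows directly from the preceding theorem.
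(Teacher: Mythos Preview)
Your proposal is correct and follows essentially the same route as the paper: identify $\int_a^b (z(s,a)-\eta)_-\,\Lambda(d\eta)$ with $u(z(s,a))$ to convert the constraint, and apply Fubini to rewrite $\int_a^b \mathbb{E}[(Y-\eta)_-]\,\Lambda(d\eta)$ as $\mathbb{E}[u(Y)]$ in the objective. Your additional remarks on well-definedness of the objective under the surjection $\Lambda\mapsto u$ and on the closure in the definition of $\mathcal{U}([a,b])$ are sound elaborations that the paper leaves implicit.
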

\begin{proof}
Notice that the function
\[
u\left(x\right)=\int_{a}^{b}\left(x-\eta\right)_{-}\Lambda\left(d\eta\right)
\]
is an increasing concave function in $x$ for any $\Lambda\in\mathcal{M}\left(\left[a,b\right]\right)$
with $\Lambda\geq0$. By using this definition of $u$, we see that
for each state-action pair $\left(s,a\right)$,
\[
\langle\Lambda,\left(z\left(s,a\right)-\eta\right)_{-}\rangle=\int_{a}^{b}\left(z\left(s,a\right)-\eta\right)_{-}\Lambda\left(d\eta\right)=u\left(z\left(s,a\right)\right).
\]
Further, we can apply the Fubini theorem again to obtain
\[
\langle\Lambda,y\rangle=\int_{a}^{b}\mathbb{E}\left[\left(Y-\eta\right)_{-}\right]\Lambda\left(d\eta\right)=\mathbb{E}\left[\int_{a}^{b}\left(Y-\eta\right)_{-}\Lambda\left(d\eta\right)\right]=\mathbb{E}\left[u\left(Y\right)\right].
\]

\end{proof}
Next we verify that there is no duality gap between the primal problem
(\ref{LP}) - (\ref{LP-4}) and its dual (\ref{LP_dual}) - (\ref{LP_dual-2}).
All three dual problems (\ref{LP_dual}) - (\ref{LP_dual-2}), (\ref{LP_dual-3})
- (\ref{LP_dual-5}), and (\ref{LP_dual-6}) - (\ref{LP_dual-8})
are equivalent so the upcoming results apply to all of them.

The following result states that the optimal values of problems (\ref{LP})
- (\ref{LP-4}) and (\ref{LP_dual}) - (\ref{LP_dual-2}) are equal.
Afterwards, we will show that the optimal value of problem (\ref{LP_dual})
- (\ref{LP_dual-2}) is attained, establishing strong duality.
\begin{thm}
The optimal values of problems (\ref{LP}) - (\ref{LP-4}) and (\ref{LP_dual})
- (\ref{LP_dual-2}) are equal,
\begin{align*}
\rho^{*}= & \max\left\{ R\left(\pi,\nu\right)\mbox{ : }\left(\pi,\nu\right)\in\Delta\right\} \\
= & \inf\left\{ \beta-\langle\Lambda,y\rangle\mbox{ : }(\ref{LP_dual-1}),\,\left(h,\beta,\Lambda\right)\in\mathcal{F}_{\hat{w}}\left(S\right)\times\mathbb{R}\times\mathcal{M}\left(\left[a,b\right]\right),\,\Lambda\geq0\right\} .
\end{align*}
\end{thm}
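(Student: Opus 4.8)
The plan is to prove the two inequalities separately: weak duality, a one-line pairing argument, gives $\rho^{*}\le\inf\{\beta-\langle\Lambda,y\rangle:\ldots\}$, and the absence of a duality gap — the reverse inequality — follows from the infinite-dimensional linear programming duality theorem of \cite{AN87}, whose hypotheses are met thanks to Assumptions \ref{ass:reduction} and \ref{ass:bounded}.

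For weak duality, fix a primal-feasible $\mu$ and a dual-feasible triple $(h,\beta,\Lambda)$. Pairing the dual inequality $r+L_{0}^{*}h-\beta\,1+L_{1}^{*}\Lambda\le0$ against $\mu\ge0$ and using the adjoint identities $\langle\mu,L_{0}^{*}h\rangle=\langle L_{0}\mu,h\rangle$ and $\langle\mu,L_{1}^{*}\Lambda\rangle=\langle L_{1}\mu,\Lambda\rangle$ gives
\[
\langle\mu,r\rangle+\langle L_{0}\mu,h\rangle-\beta\langle\mu,1\rangle+\langle L_{1}\mu,\Lambda\rangle\le0 .
\]
Inserting the primal constraints $L_{0}\mu=0$ and $\langle\mu,1\rangle=1$ and using $\langle\Lambda,L_{1}\mu-y\rangle\ge0$ (valid since $L_{1}\mu\ge y$ and $\Lambda\ge0$) yields $\langle\mu,r\rangle\le\beta-\langle\Lambda,y\rangle$; taking the supremum over primal-feasible $\mu$ and the infimum over dual-feasible triples gives $\rho^{*}\le\inf\{\beta-\langle\Lambda,y\rangle:\ldots\}$. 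Both values are finite: $\rho^{*}>-\infty$ because $\Delta_{s}\neq\emptyset$ (Lemma \ref{lem:reduction} together with Assumption \ref{ass:reduction}(a)) and every stable measure has $\langle\mu,r\rangle>-\infty$, while the dual is consistent with value $0$ at $(h,\beta,\Lambda)=(0,0,0)$.

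For the no-gap direction, write problem (\ref{LP})--(\ref{LP-4}) in the conic form underlying \cite{AN87}: the decision variable $\mu$ ranges over the positive cone of $\mathcal{M}_{w}(K)$, the operator $\mu\mapsto(L_{0}\mu,\ \langle\mu,1\rangle,\ L_{1}\mu)$ maps into $\mathcal{M}_{\hat{w}}(S)\times\mathbb{R}\times\mathcal{C}([a,b])$ (Assumption \ref{ass:bounded} guarantees this operator and its adjoint act between the stated spaces), the right-hand side is $(0,1,y)$, and the last block is an inequality in the cone of nonnegative elements of $\mathcal{C}([a,b])$. By \cite{AN87}, there is no duality gap provided the primal is consistent and has finite value — both already checked — and the perturbed value function
\[
\psi(\delta_{0},\delta_{1},\delta_{2})\triangleq\sup\bigl\{\langle\mu,r\rangle:\ L_{0}\mu=\delta_{0},\ \langle\mu,1\rangle=1+\delta_{1},\ L_{1}\mu\ge y+\delta_{2},\ \mu\ge0\bigr\}
\]
is upper semicontinuous at the origin (equivalently, the associated consequence cone is weak$^{*}$-closed). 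This last point is the crux of the proof, and I expect it to be the main obstacle.

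To establish upper semicontinuity of $\psi$ at the origin I would reuse the compactness argument from the proof of solvability. Given perturbations $(\delta_{0}^{n},\delta_{1}^{n},\delta_{2}^{n})\to0$, choose $\mu^{n}$ feasible for $\psi(\delta_{0}^{n},\delta_{1}^{n},\delta_{2}^{n})$ with $\langle\mu^{n},r\rangle$ within $1/n$ of the supremum; then $\langle\mu^{n},r\rangle\ge\limsup_{n}\psi(\delta_{0}^{n},\delta_{1}^{n},\delta_{2}^{n})-o(1)$ keeps $\sup_{n}\langle\mu^{n},-r\rangle$ bounded, and since $-r$ is a moment (Assumption \ref{ass:reduction}(b)), \cite[Propositions E.6 and E.8]{HL96} produce a subsequence converging weakly to some measure $\mu$ on $K$. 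Weak continuity of $Q$ (Assumption \ref{ass:reduction}(d)) passes $L_{0}\mu^{n_{i}}=\delta_{0}^{n_{i}}\to0$ to $L_{0}\mu=0$; continuity of the constant function $1$ passes $\langle\mu^{n_{i}},1\rangle=1+\delta_{1}^{n_{i}}$ to $\langle\mu,1\rangle=1$; upper semicontinuity of $z$ (Assumption \ref{ass:reduction}(c)) makes each $(z-\eta)_{-}$ upper semicontinuous and bounded above, so, using the uniform-continuity lemma in $\eta$ established above and compactness of $[a,b]$, the inequalities $\langle\mu^{n_{i}},(z-\eta)_{-}\rangle\ge y(\eta)+\delta_{2}^{n_{i}}(\eta)$ pass to $L_{1}\mu\ge y$; and \cite[Proposition E.2]{HL96} gives $\limsup_{i}\langle\mu^{n_{i}},r\rangle\le\langle\mu,r\rangle$. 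Hence $\limsup_{n}\psi(\delta_{0}^{n},\delta_{1}^{n},\delta_{2}^{n})\le\langle\mu,r\rangle\le\psi(0,0,0)=\rho^{*}$, which is the required upper semicontinuity. With this in hand, \cite{AN87} yields $\rho^{*}=\inf\{\beta-\langle\Lambda,y\rangle:\ldots\}$, completing the proof; all four parts of Assumption \ref{ass:reduction} are used simultaneously in this final step.
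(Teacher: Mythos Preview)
Your approach and the paper's are essentially the same route: both invoke the infinite-dimensional LP duality theorem of \cite{AN87} (the paper via \cite[Theorem~12.3.4]{HL99}) and reduce the no-gap assertion to a closedness/upper-semicontinuity condition that is then verified using the moment property of $-r$ (Assumption~\ref{ass:reduction}(b)) together with weak continuity of $Q$ and upper semicontinuity of $z$. The paper phrases the hypothesis as weak closedness of the consequence cone
\[
H=\bigl\{(L_{0}\mu,\ \langle\mu,1\rangle,\ L_{1}\mu-\alpha,\ \langle\mu,r\rangle-\zeta):\mu\ge0,\ \alpha\ge0,\ \zeta\ge0\bigr\},
\]
checks it with nets, and delegates the components not involving $L_{1}$ directly to \cite[Theorem~12.3.4]{HL99}; only the new slack block $L_{1}\mu-\alpha$ coming from the dominance constraint receives a separate argument. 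You instead verify the equivalent upper semicontinuity of the perturbation function $\psi$ at the origin and spell out the full compactness step yourself. Your version is more self-contained; the paper's is shorter because it quotes the unconstrained average-reward result.

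One technical caution: your u.s.c.\ argument is sequential (you take $(\delta_{0}^{n},\delta_{1}^{n},\delta_{2}^{n})\to0$), but the perturbation space $\mathcal{M}_{\hat w}(S)\times\mathbb{R}\times\mathcal{C}([a,b])$ carries a weak topology that is not first-countable in general, so sequences alone do not test upper semicontinuity (this is why the paper works with nets indexed by a directed set $D$). You can close this either by rewriting your argument with nets, or---more economically---by noting that the moment bound $\sup_{n}\langle\mu^{n},-r\rangle<\infty$ confines the relevant $\mu^{n}$ to a tight and hence metrizable weakly compact set in $\mathcal{M}_{w}(K)$ (\cite[Propositions~E.6, E.8]{HL96}), so that sequential extraction on the primal side is legitimate and drives the rest of the limit passage.
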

\begin{proof}
Apply \cite[Theorem 12.3.4]{HL99}, which in turn follows from \cite[Theorem 3.9]{AN87}.
Introduce slack variables $\alpha\in\mathcal{C}\left(\left[a,b\right]\right)$
for the dominance constraints $L_{1}\mu\geq y$. We must show that
the set
\[
H\triangleq\left\{ \left(L_{0}\mu,\,\langle\mu,1\rangle,\, L_{1}x-\alpha,\,\langle\mu,r\rangle-\zeta\right)\mbox{ : }\mu\geq0,\,\alpha\geq0,\,\zeta\geq0\right\} 
\]
is weakly closed (closed in the weak topology). Let $\left(D,\,\leq\right)$
be a directed (partially ordered) set, and consider a net
\[
\left\{ \left(\mu_{\kappa},\alpha_{\kappa},\zeta_{\kappa}\right)\mbox{ : }\kappa\in D\right\} 
\]
where $\mu_{\kappa}\geq0$, $\alpha_{\kappa}\geq0$, and $\zeta_{\kappa}\geq0$
in $\mathcal{M}_{w}\left(K\right)\times\mathbb{R}\times\mathcal{C}\left(\left[a,b\right]\right)$
such that 
\[
\left(L_{0}\mu_{\kappa},\,\langle\mu_{\kappa},1\rangle,\, L_{1}\mu_{\kappa}-\alpha_{\kappa},\,\langle\mu_{\kappa},r\rangle-\zeta_{\kappa}\right)
\]
has weak limit $\left(\nu^{*},\gamma^{*},f^{*},\rho^{*}\right)\in\mathcal{M}_{\hat{w}}\left(S\right)\times\mathbb{R}\times\mathcal{C}\left(\left[a,b\right]\right)\times\mathbb{R}$.
Specifically,
\[
\langle\mu_{\kappa},1\rangle\rightarrow\gamma^{*}
\]
and
\[
\langle\mu_{\kappa},r\rangle-\zeta_{\kappa}\rightarrow\rho^{*},
\]
since weak convergence on $\mathbb{R}$ is equivalent to the usual
notion of convergence,
\[
\langle L_{0}\mu_{\kappa},g\rangle\rightarrow\langle\nu^{*},g\rangle
\]
for all $g\in\mathcal{F}_{\hat{w}}\left(S\right)$, and
\[
\langle L_{1}\mu_{\kappa}-\alpha_{\kappa},\Lambda\rangle\rightarrow\langle f^{*},\Lambda\rangle
\]
for all $\Lambda\in\mathcal{M}\left(\left[a,b\right]\right)$. We
must show that $\left(\nu^{*},\gamma^{*},f^{*},\rho^{*}\right)\in H$
under these conditions, i.e. that there exist $x\geq0$, $\alpha\geq0$,
and $\zeta\geq0$ such that
\[
\nu^{*}=L_{0}\mu,\,\gamma^{*}=\langle\mu,1\rangle,\, f^{*}=L_{1}\mu-\alpha,\,\rho^{*}=\langle\mu,r\rangle-\zeta.
\]
The fact that there exist $\mu\geq0$ and $\zeta\geq0$ such that
\[
\nu^{*}=L_{0}\mu,\,\gamma^{*}=\langle\mu,1\rangle,\,\rho^{*}=\langle\mu,r\rangle-\zeta,
\]
is already established in \cite[Theorem 12.3.4]{HL99}, and applies
to our setting without modification.

It remains to verify that there exists $\alpha\in\mathcal{C}\left(\left[a,b\right]\right)$
with $\alpha\geq0$ and $f^{*}=L_{1}\mu-\alpha$. Choose $\Lambda=\delta_{\eta}$
for the Dirac delta function at $\eta\in\left[a,b\right]$ to see
that
\[
\left[L_{1}\mu_{\kappa}\right]\left(\eta\right)-\alpha_{\kappa}\left(\eta\right)\rightarrow f^{*}\left(\eta\right)
\]
for all $\eta\in\left[a,b\right]$, establishing pointwise convergence.
Pointwise convergence on a compact set implies uniform convergence,
so in fact
\[
L_{1}\mu_{\kappa}-\alpha_{\kappa}\rightarrow f^{*}
\]
in the supremum norm topology on $\mathcal{C}\left(\left[a,b\right]\right)$.
Since $L_{1}\mu_{\kappa}\in\mathcal{C}\left(\left[a,b\right]\right)$
and $f^{*}\in\mathcal{C}\left(\left[a,b\right]\right)$, it follows
that $L_{1}\mu_{\kappa}-f^{*}\in\mathcal{C}\left(\left[a,b\right]\right)$
for any $\kappa$. Define $\alpha_{\kappa}=L_{1}\mu_{\kappa}-f^{*}$
and $\alpha=L_{1}\mu-f^{*}$, and notice that $\alpha\geq0$ necessarily.
\end{proof}
The next theorem shows that the dual problem (\ref{LP_dual}) - (\ref{LP_dual-2})
is solvable, i.e. there exists $\left(h^{*},\beta^{*},\Lambda^{*}\right)$
satisfying $r+L_{0}^{*}h^{*}-\beta^{*}\,1+L_{1}^{*}\Lambda^{*}\leq0$
that attain the optimal value
\[
\beta^{*}-\langle\Lambda^{*},y\rangle=\rho^{*}.
\]
When problem (\ref{LP_dual}) - (\ref{LP_dual-2}) is solvable, we
are justified in saying that strong duality holds: the optimal values
of both problems (\ref{LP}) - (\ref{LP-4}) and (\ref{LP_dual})
- (\ref{LP_dual-2}) are equal and both problems attain their optimal
value.

To continue we make some assumptions in line with \cite{HGL03}.
\begin{assumption}
\label{ass:strong} There exists a minimizing sequence $\left(h^{n},\beta^{n},\Lambda^{n}\right)$
in problem (\ref{LP_dual}) - (\ref{LP_dual-2}) such that

(a) $\left\{ \beta^{n}\right\} $ is bounded in $\mathbb{R}$,

(b) $\left\{ h^{n}\right\} $ is bounded in $\mathcal{F}_{\hat{w}}\left(S\right)$,
and

(c) $\left\{ \Lambda^{n}\right\} $ is bounded in the weak{*} topology
on $\mathcal{M}\left(\left[a,b\right]\right)$.
\end{assumption}
We establish strong duality next. To reiterate, strong duality holds
when the optimal values of problems (\ref{LP}) - (\ref{LP-4}) and
(\ref{LP_dual}) - (\ref{LP_dual-2}) are equal, and both problems
are solvable.
\begin{thm}
Suppose assumption \ref{ass:strong} holds. Strong duality holds between
problem (\ref{LP}) - (\ref{LP-4}) and problem (\ref{LP_dual}) -
(\ref{LP_dual-2}).\end{thm}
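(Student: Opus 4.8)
The plan is to combine the no-duality-gap result just established (the optimal values of (\ref{LP})~-~(\ref{LP-4}) and (\ref{LP_dual})~-~(\ref{LP_dual-2}) coincide) with a compactness argument showing that the infimum in the dual is attained. By Assumption~\ref{ass:strong}, there is a minimizing sequence $\left(h^{n},\beta^{n},\Lambda^{n}\right)$ with $\left\{ \beta^{n}\right\}$ bounded in $\mathbb{R}$, $\left\{ h^{n}\right\}$ bounded in $\mathcal{F}_{\hat{w}}\left(S\right)$, and $\left\{ \Lambda^{n}\right\}$ bounded in the weak$^{*}$ topology on $\mathcal{M}\left(\left[a,b\right]\right)$. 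I would extract convergent subsequences: for $\left\{ \beta^{n}\right\}$, a subsequence converging to some $\beta^{*}\in\mathbb{R}$ by Bolzano--Weierstrass; for $\left\{ \Lambda^{n}\right\}$, a weak$^{*}$-convergent subsequence $\Lambda^{n_{i}}\to\Lambda^{*}$ by the Banach--Alaoglu theorem, since $\mathcal{M}\left(\left[a,b\right]\right)$ is the dual of the separable Banach space $\mathcal{C}\left(\left[a,b\right]\right)$, so closed bounded sets are weak$^{*}$-sequentially compact; and for $\left\{ h^{n}\right\}$, a limit $h^{*}\in\mathcal{F}_{\hat{w}}\left(S\right)$ along a further subsequence in the appropriate (weak$^{*}$-type) topology, exactly as in the proof of \cite[Theorem 12.3.4]{HL99}. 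Passing to a common subsequence, I obtain a candidate optimal triple $\left(h^{*},\beta^{*},\Lambda^{*}\right)$ with $\Lambda^{*}\geq0$.

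Next I would verify that $\left(h^{*},\beta^{*},\Lambda^{*}\right)$ is dual-feasible, i.e. that $r+L_{0}^{*}h^{*}-\beta^{*}\,1+L_{1}^{*}\Lambda^{*}\leq0$ on $K$. This is a pointwise inequality in $\left(s,a\right)$, so it suffices to pass to the limit in each constraint
\[
r\left(s,a\right)+\int_{a}^{b}\left(z\left(s,a\right)-\eta\right)_{-}\Lambda^{n_{i}}\left(d\eta\right)\leq\beta^{n_{i}}+h^{n_{i}}\left(s\right)-\int_{S}h^{n_{i}}\left(\xi\right)Q\left(d\xi\mbox{ | }s,a\right).
\]
The left side converges because $\eta\mapsto\left(z\left(s,a\right)-\eta\right)_{-}$ is continuous on $\left[a,b\right]$ and $\Lambda^{n_{i}}\to\Lambda^{*}$ weak$^{*}$; the right side converges (or behaves with the correct inequality direction after taking $\liminf$) by the convergence of $\left\{ \beta^{n}\right\}$ and the convergence properties of $\left\{ h^{n}\right\}$ together with the weak continuity of $Q$ from Assumption~\ref{ass:reduction}(d) and Assumption~\ref{ass:bounded}(b), mirroring the corresponding step in \cite[Theorem 12.3.4]{HL99}. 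Then I would check that the objective value is attained in the limit: $\beta^{n_{i}}\to\beta^{*}$ and, by weak$^{*}$ convergence applied to the fixed continuous function $y\in\mathcal{C}\left(\left[a,b\right]\right)$, $\langle\Lambda^{n_{i}},y\rangle\to\langle\Lambda^{*},y\rangle$, so $\beta^{n_{i}}-\langle\Lambda^{n_{i}},y\rangle\to\beta^{*}-\langle\Lambda^{*},y\rangle$, which equals the dual optimal value since $\left(h^{n},\beta^{n},\Lambda^{n}\right)$ was a minimizing sequence. Combined with feasibility of $\left(h^{*},\beta^{*},\Lambda^{*}\right)$ and the no-gap theorem, this gives $\beta^{*}-\langle\Lambda^{*},y\rangle=\rho^{*}$, so the dual is solvable; primal solvability was already proved; hence strong duality holds.

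The main obstacle I anticipate is handling the $h$-component carefully: $\mathcal{F}_{\hat{w}}\left(S\right)$ is a weighted sup-norm space of functions on a noncompact Borel space, and extracting a limit point $h^{*}$ in a topology strong enough to pass to the limit in the term $h\left(s\right)-\int_{S}h\left(\xi\right)Q\left(d\xi\mbox{ | }s,a\right)$ pointwise is delicate. The right way to do this is to invoke the machinery of \cite[Theorem 12.3.4]{HL99} (in turn \cite[Theorem 3.9]{AN87}), where precisely this extraction and limit-passing for the $L_{0}^{*}$ term is carried out under boundedness hypotheses on $\left\{ h^{n}\right\}$; our only genuinely new ingredient is the $\Lambda$-component, for which weak$^{*}$ compactness of bounded sets in $\mathcal{M}\left(\left[a,b\right]\right)$ and continuity of $y$ and of $\eta\mapsto\left(z\left(s,a\right)-\eta\right)_{-}$ make the limit-passing routine. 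So the proof is essentially: cite \cite[Theorem 12.3.4]{HL99} for the $\left(h,\beta\right)$ part verbatim, and supply the short additional weak$^{*}$-compactness argument for $\Lambda$.
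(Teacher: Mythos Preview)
Your approach is essentially the paper's: extract limit points of $\beta^{n}$ by Bolzano--Weierstrass, of $\Lambda^{n}$ by Banach--Alaoglu together with separability of $\mathcal{C}\left(\left[a,b\right]\right)$, and of $h^{n}$, and then pass to the limit in the constraint and the objective. The one point to correct is the handling of $h$ that you flagged as delicate: the paper does \emph{not} invoke \cite[Theorem~12.3.4]{HL99} here (that citation belongs to the preceding no-gap theorem, not to dual solvability), but simply defines $h^{*}\left(s\right)\triangleq\liminf_{n}h^{n}\left(s\right)$ pointwise and applies Fatou's lemma to get $\liminf_{n}\int_{S}h^{n}\left(\xi\right)Q\left(d\xi\mbox{ | }s,a\right)\geq\int_{S}h^{*}\left(\xi\right)Q\left(d\xi\mbox{ | }s,a\right)$, which, combined with $\beta^{n}\to\beta^{*}$ and the weak$^{*}$ convergence of $\Lambda^{n}$, yields feasibility and hence optimality of $\left(h^{*},\beta^{*},\Lambda^{*}\right)$.
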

\begin{proof}
Let $\left(h^{n},\beta^{n},\Lambda^{n}\right)\in\mathcal{F}_{\hat{w}}\left(S\right)\times\mathbb{R}\times\mathcal{M}\left(\left[a,b\right]\right)$
for $n\geq0$ be a minimizing sequence of triples given in the preceding
assumption \ref{ass:strong}:

\begin{align*}
r\left(s,a\right)+\int_{a}^{b}\left(z\left(s,a\right)-\eta\right)_{-}\Lambda^{n}\left(d\eta\right)\leq\beta^{n}+h^{n}\left(s\right)-\int_{S}h^{n}\left(\xi\right)Q\left(d\xi\mbox{ | }s,\, a\right),\hspace{0.2in} & \forall\left(s,a\right)\in K,
\end{align*}
for all $n\geq0$ and

\[
\beta^{n}-\int_{a}^{b}\mathbb{E}\left[\left(Y-\eta\right)_{-}\right]\Lambda^{n}\left(d\eta\right)\downarrow\rho^{*}.
\]
Since the sequence $\left\{ \beta^{n}\right\} $ is bounded, it has
a convergent subsequence with $\lim_{n\rightarrow\infty}\beta^{n}=\beta^{*}$.

Now $\left\{ \Lambda^{n}\right\} $ is bounded in $\mathcal{M}\left(\left[a,b\right]\right)$
in the weak{*} topology induced by $\mathcal{C}\left(\left[a,b\right]\right)$
by assumption. Since $\left\{ \Lambda^{n}\right\} $ is bounded, the
sequence can be scaled to lie in the closed unit ball of $\mathcal{M}\left(\left[a,b\right]\right)$
in the weak{*} topology. Since $\mathcal{C}\left(\left[a,b\right]\right)$
is separable (there exists a countable dense set, i.e. the polynomials
with rational coefficients), the weak{*} topology on $\mathcal{M}\left(\left[a,b\right]\right)$
is metrizable. By the Banach-Alaoglu theorem, it follows that $\left\{ \Lambda^{n}\right\} $
has a subsequence that converges to some $\Lambda^{*}$ in the weak{*}
topology, i.e.

\[
\langle\Lambda^{n},f\rangle\rightarrow\langle\Lambda^{*},f\rangle
\]
for all $f\in\mathcal{C}\left(\left[a,b\right]\right)$. In particular,
since $\mathbb{E}\left[\left(Y-\eta\right)_{-}\right]$ and $\left(z\left(s,a\right)-\eta\right)_{-}$
are continuous functions on $\left[a,b\right]$ for all $\left(s,a\right)\in K$,
it follows that

\[
\lim_{n\rightarrow\infty}\int_{a}^{b}\mathbb{E}\left[\left(Y-\eta\right)_{-}\right]\Lambda^{n}\left(d\eta\right)=\int_{a}^{b}\mathbb{E}\left[\left(Y-\eta\right)_{-}\right]\Lambda^{*}\left(d\eta\right)
\]
and
\[
\lim_{n\rightarrow\infty}\int_{a}^{b}\left(z\left(s,a\right)-\eta\right)_{-}\Lambda^{n}\left(d\eta\right)=\int_{a}^{b}\left(z\left(s,a\right)-\eta\right)_{-}\Lambda^{*}\left(d\eta\right).
\]

Finally, since $\left\{ h^{n}\right\} $ is bounded in $\mathcal{F}_{\hat{w}}\left(S\right)$
we can define 
\[
h^{*}\left(s\right)\triangleq\liminf_{m\rightarrow\infty}h^{n}\left(s\right)
\]
for all $s\in S$. Then the function $h^{*}\left(s\right)$ is bounded
in $\mathcal{F}_{\hat{w}}\left(S\right)$, and
\[
\liminf_{n\rightarrow\infty}\int_{S}h^{n}\left(\xi\right)Q\left(d\xi\mbox{ | }s,a\right)\geq\int_{S}h^{*}\left(\xi\right)Q\left(d\xi\mbox{ | }s,a\right)
\]
by Fatou's lemma. Taking the limit, it follows that $\left(h^{*},\beta^{*},\Lambda^{*}\right)$
is an optimal solution to the dual problem.
\end{proof}
The role of the utility function $u$ in problem (\ref{LP_dual-6})
- (\ref{LP_dual-8}) is fairly intuitive. The function $u$ serves
as an additional pricing variable for the performance function $z\left(s,a\right)$,
and the total reward is treated as if it were $r\left(s,a\right)+u\left(z\left(s,a\right)\right)$.
Problem (\ref{LP_dual-6}) - (\ref{LP_dual-8}) leads to a new version
of the optimality equations for average reward based on infinite-dimensional
linear programming complementary slackness.
\begin{thm}
Let $\mu^{*}=\hat{\mu}^{*}\cdot\phi^{*}$ be an optimal solution to
problem (\ref{LP}) - (\ref{LP-4}), and $\left(h^{*},\beta^{*},u^{*}\right)$
be an optimal solution to problem (\ref{LP_dual}) - (\ref{LP_dual-2}).
Then
\[
\langle\mu^{*},u^{*}\left(z\right)\rangle=\mathbb{E}\left[u^{*}\left(Y\right)\right],
\]
and
\[
\beta^{*}+h^{*}\left(s\right)=\sup_{a\in A\left(s\right)}\left\{ r\left(s,a\right)+u^{*}\left(z\left(s,a\right)\right)+\int_{S}h^{*}\left(\xi\right)Q\left(d\xi\,\vert\, s,\, a\right)\right\} 
\]
for $\hat{\mu}^{*}-$almost all $s\in S$.\end{thm}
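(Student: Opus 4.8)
The plan is to run the standard infinite-dimensional complementary slackness argument, exploiting the sign structure of the primal and dual feasible sets together with the strong duality equality just established. Let $\Lambda^{*}\geq 0$ be the measure in $\mathcal{M}\left(\left[a,b\right]\right)$ corresponding to $u^{*}$ via $u^{*}\left(x\right)=\int_{a}^{b}\left(x-\eta\right)_{-}\Lambda^{*}\left(d\eta\right)$. First I would pair the nonnegative primal optimum $\mu^{*}$ against the dual constraint (\ref{LP_dual-1}): since $\mu^{*}\geq 0$ and $r+L_{0}^{*}h^{*}-\beta^{*}\,1+L_{1}^{*}\Lambda^{*}\leq 0$,
\[
\langle\mu^{*},r\rangle+\langle\mu^{*},L_{0}^{*}h^{*}\rangle-\beta^{*}\langle\mu^{*},1\rangle+\langle\mu^{*},L_{1}^{*}\Lambda^{*}\rangle\leq 0 .
\]
Substituting the primal feasibility relations $L_{0}\mu^{*}=0$, $\langle\mu^{*},1\rangle=1$, the adjoint identities $\langle\mu^{*},L_{0}^{*}h^{*}\rangle=\langle L_{0}\mu^{*},h^{*}\rangle=0$ and $\langle\mu^{*},L_{1}^{*}\Lambda^{*}\rangle=\langle\Lambda^{*},L_{1}\mu^{*}\rangle$, and splitting $\langle\Lambda^{*},L_{1}\mu^{*}\rangle=\langle\Lambda^{*},L_{1}\mu^{*}-y\rangle+\langle\Lambda^{*},y\rangle$, this becomes
\[
\langle\mu^{*},r\rangle\leq\beta^{*}-\langle\Lambda^{*},y\rangle-\langle\Lambda^{*},L_{1}\mu^{*}-y\rangle\leq\beta^{*}-\langle\Lambda^{*},y\rangle ,
\]
where the last inequality uses $\Lambda^{*}\geq 0$ and the primal feasibility $L_{1}\mu^{*}-y\geq 0$.

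Next I would invoke the strong duality theorem, which forces $\langle\mu^{*},r\rangle=\beta^{*}-\langle\Lambda^{*},y\rangle$; hence both displayed inequalities are in fact equalities. The equality $\langle\Lambda^{*},L_{1}\mu^{*}-y\rangle=0$ gives $\langle\mu^{*},L_{1}^{*}\Lambda^{*}\rangle=\langle\Lambda^{*},L_{1}\mu^{*}\rangle=\langle\Lambda^{*},y\rangle$, and using the two identities from the proof of the Corollary, namely $\left[L_{1}^{*}\Lambda^{*}\right]\left(s,a\right)=\int_{a}^{b}\left(z\left(s,a\right)-\eta\right)_{-}\Lambda^{*}\left(d\eta\right)=u^{*}\left(z\left(s,a\right)\right)$ and $\langle\Lambda^{*},y\rangle=\mathbb{E}\left[u^{*}\left(Y\right)\right]$ (Fubini), this reads $\langle\mu^{*},u^{*}\left(z\right)\rangle=\mathbb{E}\left[u^{*}\left(Y\right)\right]$, the first assertion. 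The second equality forces $\langle\mu^{*},\,r+L_{0}^{*}h^{*}-\beta^{*}\,1+L_{1}^{*}\Lambda^{*}\rangle=0$; since the integrand is $\leq 0$ on all of $K$ and $\mu^{*}\geq 0$, it must vanish $\mu^{*}$-almost everywhere. Expanding $\left[L_{0}^{*}h^{*}\right]\left(s,a\right)=h^{*}\left(s\right)-\int_{S}h^{*}\left(\xi\right)Q\left(d\xi\mid s,a\right)$ and $\left[L_{1}^{*}\Lambda^{*}\right]\left(s,a\right)=u^{*}\left(z\left(s,a\right)\right)$, this says that
\[
\beta^{*}+h^{*}\left(s\right)=r\left(s,a\right)+u^{*}\left(z\left(s,a\right)\right)+\int_{S}h^{*}\left(\xi\right)Q\left(d\xi\,\vert\,s,a\right)
\]
for $\mu^{*}$-almost all $\left(s,a\right)\in K$, whereas dual feasibility (\ref{LP_dual-7}) gives the $\geq$ version of the same relation for every $\left(s,a\right)\in K$.

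Finally, to upgrade the $\mu^{*}$-a.e. equality to the $\hat{\mu}^{*}$-a.e. statement about the supremum, I would disintegrate $\mu^{*}=\hat{\mu}^{*}\cdot\phi^{*}$. Writing $G\subset K$ for the set on which equality holds, so $\mu^{*}\left(G\right)=1$, disintegration gives $\int_{S}\phi^{*}\left(G_{s}\mid s\right)\hat{\mu}^{*}\left(ds\right)=1$ with $G_{s}=\left\{ a\in A\left(s\right):\left(s,a\right)\in G\right\}$; since $\phi^{*}\left(A\left(s\right)\mid s\right)=1$, this forces $\phi^{*}\left(G_{s}\mid s\right)=1$, in particular $G_{s}\neq\emptyset$, for $\hat{\mu}^{*}$-almost all $s$. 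For such $s$, any $a\in G_{s}$ attains $\beta^{*}+h^{*}\left(s\right)=r\left(s,a\right)+u^{*}\left(z\left(s,a\right)\right)+\int_{S}h^{*}\left(\xi\right)Q\left(d\xi\mid s,a\right)$, and combining with the upper bound from (\ref{LP_dual-7}) over all $a'\in A\left(s\right)$ yields $\beta^{*}+h^{*}\left(s\right)=\sup_{a\in A\left(s\right)}\left\{ r\left(s,a\right)+u^{*}\left(z\left(s,a\right)\right)+\int_{S}h^{*}\left(\xi\right)Q\left(d\xi\mid s,a\right)\right\}$. The main obstacle is this last disintegration step, i.e. ensuring the conditional kernels $\phi^{*}\left(\cdot\mid s\right)$ put full mass on $G_{s}$ off a $\hat{\mu}^{*}$-null set, together with checking that all the pairings are legitimate: $u^{*}\left(z\right)$ is bounded since $u^{*}\in\mathcal{C}\left(\mathbb{R}\right)$ and $z$ is bounded, and $\int_{S}h^{*}\left(\xi\right)Q\left(d\xi\mid s,a\right)\in\mathcal{F}_{w}\left(K\right)$ because $h^{*}\in\mathcal{F}_{\hat{w}}\left(S\right)$ and Assumption \ref{ass:bounded}(b) holds, so the adjoint manipulations and Fubini applications above are all justified.
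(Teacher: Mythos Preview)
Your proof is correct and follows essentially the same complementary slackness approach as the paper, which simply invokes the two relations $\langle\Lambda^{*},L_{1}\mu^{*}-y\rangle=0$ and $\langle\mu^{*},r+L_{0}^{*}h^{*}-\beta^{*}1+L_{1}^{*}\Lambda^{*}\rangle=0$ directly and then concludes. You have filled in considerably more detail than the paper does: in particular, you explicitly derive complementary slackness from strong duality and the sign structure, and you carry out the disintegration step upgrading the $\mu^{*}$-a.e.\ equality to the $\hat{\mu}^{*}$-a.e.\ supremum statement, which the paper leaves entirely implicit behind the phrase ``which yields the second statement.''
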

\begin{proof}
There is a corresponding optimal solution $\left(h^{*},\beta^{*},\Lambda^{*}\right)$
to problem (\ref{LP_dual}) - (\ref{LP_dual-2}). Complementary slackness
between problems (\ref{LP}) - (\ref{LP-4}) and (\ref{LP_dual})
- (\ref{LP_dual-2}) gives $\langle\Lambda^{*},\, L_{1}\mu^{*}-y\rangle=0$,
where $\left(h^{*},\beta^{*},u^{*}\right)$ is a corresponding optimal
solution of problem (\ref{LP_dual}) - (\ref{LP_dual-2}). Then
\[
\langle\Lambda^{*},\, L_{1}\mu^{*}\rangle=\langle L_{1}^{*}\Lambda^{*},\mu^{*}\rangle=\langle\mu^{*},u^{*}\left(z\right)\rangle
\]
and $\langle\Lambda^{*},\, y\rangle=\mathbb{E}\left[u^{*}\left(Y\right)\right]$.

Complementary slackness also gives 
\[
\langle r+L_{0}^{*}h^{*}-\beta^{*}\,1+L_{1}^{*}\Lambda^{*},\,\mu^{*}\rangle=0,
\]
which yields the second statement since $\mu^{*}\geq0$ and $r+L_{0}^{*}h^{*}-\beta^{*}\,1+L_{1}^{*}\Lambda^{*}\leq0$.
\end{proof}

\section{Variations and extensions}

\subsection{Multivariate integral stochastic orders}

We extend our repertoire in this section to include some additional
stochastic orders. Integral stochastic orders (see \cite{Muller2002})
refer to stochastic orders that are defined in terms of families of
functions. The increasing concave stochastic order is an example of
an integral stochastic order, because it is defined in terms of the
family of increasing concave functions. We now give attention to some
multivariate integral stochastic orders. So far, we have considered
a $z\mbox{ : }K\rightarrow\mathbb{R}$ that is a scalar-valued function.
In practice there are usually many system performance measures of
interest, so it is logical to consider vector valued $z\mbox{ : }K\rightarrow\mathbb{R}^{n}$
as well. For example, $z\left(s,a\right)$ may represent the service
rate to $n$ customers in a wireless network. The empirical distribution
$\lim_{T\rightarrow\infty}\frac{1}{T}\sum_{t=0}^{T-1}z\left(s_{t},a_{t}\right)$
is now a vector-valued random variable on $\mathbb{R}^{n}$.

Recall the multivariate increasing concave stochastic order. For random
vectors $X,\, Y\in\mathbb{R}^{n}$, $X$ dominates $Y$ in the increasing
concave stochastic order, written $X\geq_{icv}Y$, if $\mathbb{E}\left[u\left(X\right)\right]\geq\mathbb{E}\left[u\left(Y\right)\right]$
for all increasing concave functions $u\mbox{ : }\mathbb{R}^{n}\rightarrow\mathbb{R}$
such that both expectations exist. Unlike univariate $\geq_{icv}$,
there is no parametrized family of functions (like $\left(x-\eta\right)_{-}$)
that generates all the multivariate increasing concave functions.
This result rests on the fact that the set of extreme points of the
increasing concave functions on $\mathbb{R}^{n}$ to $\mathbb{R}$
is dense for $n\geq2$, see \cite{Johansen1974,Bronshtein1978}.

As in \cite{Dentcheva2009}, we can relax the condition $X\geq_{icv}Y$
by constructing a tractable parametrized family of increasing concave
functions. Let $u\left(\cdot;\,\xi\right)\mbox{ : }\mathbb{R}^{n}\rightarrow\mathbb{R}$
represent a family of increasing concave functions parametrized by
$\xi\in\Xi\subset\mathbb{R}^{p}$ where $\Xi$ is compact. Then, the
family of functions $\left\{ u\left(\cdot;\,\xi\right)\right\} _{\xi\in\Xi}$
is a subset of all increasing concave functions and leads to a relaxation
of $\geq_{icv}$. We say $X$ dominates $Y$ with respect to the integral
stochastic order generated by $\left\{ u\left(\cdot;\,\xi\right)\right\} _{\xi\in\Xi}$
if $\mathbb{E}\left[u\left(X;\,\xi\right)\right]\geq\mathbb{E}\left[u\left(Y;\,\xi\right)\right]$
for all $\xi\in\Xi$. Define
\[
Z_{\xi}\left(\pi,\nu\right)=\liminf_{T\rightarrow\infty}\frac{1}{T}\mathbb{E}_{\nu}^{\pi}\left[\sum_{t=0}^{T-1}u\left(z\left(s_{t},a_{t}\right);\,\xi\right)\right]
\]
for all $\xi\in\Xi$. For convenience, we assume $u\left(x;\,\xi\right)$
is continuous in $\xi\in\Xi$ for any $x\in\mathbb{R}^{n}$.

We propose the multivariate dominance-constrained MDP:

\begin{align}
\sup\hspace{0.2in} & R\left(\pi,\nu\right)\label{MULTI}\\
\mbox{s.t.}\hspace{0.2in} & Z_{\xi}\left(\pi,\nu\right)\geq\mathbb{E}\left[u\left(Y;\,\xi\right)\right], & \forall\xi\in\Xi,\label{MULTI-1}\\
 & \pi\in\Pi.\label{MULTI-2}
\end{align}
using $\left\{ u\left(\cdot;\,\xi\right)\right\} _{\xi\in\Xi}$.

By the same reasoning as earlier,

\[
Z_{\xi}\left(\phi,\hat{\mu}\right)=\langle\mu,\, u\left(z\left(s,a\right);\,\xi\right)\rangle=\int_{S}\left[\int_{A}u\left(z\left(s,a\right);\,\xi\right)\phi\left(da\mbox{ | }s\right)\right]\hat{\mu}\left(ds\right)
\]
for all $\xi\in\Xi$ when $\mu=\hat{\mu}\cdot\phi\in\Delta_{s}$.
\begin{lem}
For any $\mu\in\mathcal{P}\left(K\right)$, $\langle\mu,u\left(z;\,\xi\right)\rangle$
is continuous in $\xi$.\end{lem}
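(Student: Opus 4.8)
The plan is to mimic the proof of the earlier lemma showing $\langle\mu,(z-\eta)_-\rangle$ is uniformly continuous in $\eta$, but now exploiting the assumed continuity of $u(x;\xi)$ in $\xi$ together with the boundedness of $z$ and the compactness of $\Xi$. First I would fix $\mu\in\mathcal{P}(K)$ and a convergent sequence (or net) $\xi_k\to\xi$ in $\Xi$, and write $\langle\mu,u(z;\,\xi_k)\rangle-\langle\mu,u(z;\,\xi)\rangle=\int_K\bigl[u(z(s,a);\,\xi_k)-u(z(s,a);\,\xi)\bigr]\mu(d(s,a))$. For each fixed $(s,a)$, the integrand tends to $0$ as $k\to\infty$ by the assumed continuity of $u(x;\cdot)$ with $x=z(s,a)$.

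Next I would argue that we may pass the limit inside the integral by dominated convergence. Here is where the standing hypotheses are needed: by Assumption~\ref{ass:reduction}(c) the function $z$ is bounded, so $z(K)$ is contained in a compact set $Z_0\subset\mathbb{R}^n$; then $u$ restricted to the compact set $Z_0\times\Xi$ is continuous, hence bounded, say $|u(x;\xi)|\le M$ for all $x\in Z_0$ and $\xi\in\Xi$. Since $\mu$ is a probability measure, the constant $M$ is $\mu$-integrable, so the dominated convergence theorem applies and yields $\langle\mu,u(z;\,\xi_k)\rangle\to\langle\mu,u(z;\,\xi)\rangle$. This establishes continuity of $\xi\mapsto\langle\mu,u(z;\,\xi)\rangle$ on $\Xi$.

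One could alternatively give the uniform-continuity version directly: on the compact set $Z_0\times\Xi$ the function $u$ is \emph{uniformly} continuous, so for any $\varepsilon>0$ there is $\delta>0$ with $|u(x;\xi')-u(x;\xi)|<\varepsilon$ whenever $|\xi'-\xi|<\delta$, uniformly in $x\in Z_0$; integrating against the probability measure $\mu$ gives $|\langle\mu,u(z;\,\xi')\rangle-\langle\mu,u(z;\,\xi)\rangle|\le\varepsilon$, which is exactly the pattern used for the $(z-\eta)_-$ lemma. Either route is short.

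The only mild obstacle is bookkeeping about where boundedness comes from: one must explicitly invoke that $z$ is bounded (Assumption~\ref{ass:reduction}(c)) so that the relevant values of the first argument of $u$ lie in a \emph{compact} set, and then combine this with compactness of $\Xi$ and continuity of $u(x;\xi)$ in $\xi$ to get a uniform bound (and uniform continuity) on the product. Without the boundedness of $z$ the family $\{u(\cdot;\xi)\}$ need not be uniformly bounded on the range of $z$ and the dominated-convergence step would fail; so the proof should flag this dependence clearly. Everything else is a routine application of dominated convergence (or uniform continuity on a compact set) exactly as in the earlier lemma.
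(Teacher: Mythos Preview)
Your approach is correct and essentially the same as the paper's: both note that $u(z(s,a);\xi)$ is continuous in $\xi$ for each fixed $(s,a)$ and then pass to continuity of the integral. The paper's proof is much terser---it simply asserts that finiteness of $\mu$ suffices---whereas you supply the missing justification via dominated convergence (or uniform continuity on $Z_0\times\Xi$), making explicit use of the boundedness of $z$ from Assumption~\ref{ass:reduction}(c); your version is the more careful of the two.
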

\begin{proof}
Write $\langle\mu,u\left(z;\,\xi\right)\rangle=\int_{K}u\left(z\left(s,a\right);\,\xi\right)\mu\left(d\left(s,a\right)\right)$.
Certainly each function $u\left(z\left(s,a\right);\,\xi\right)$ is
continuous in $\xi$ for any fixed $s\times a$. Since $\mu$ is finite,
it follows that the integral of $u\left(z\left(s,a\right);\,\xi\right)$
with respect to $\mu$ is continuous in $\xi$.
\end{proof}
Let $\mathcal{C}\left(\Xi\right)$ be the space of continuous functions
on $\Xi$ in the supremum norm,
\[
\|f\|_{\mathcal{C}\left(\Xi\right)}\triangleq\sup_{x\in\Xi}|f\left(\xi\right)|.
\]
We will express the dominance constraints (\ref{MULTI-1}) as a linear
operator in $\mathcal{C}\left(\Xi\right)$. This operator depends
on the parametrization $u\left(\cdot;\,\xi\right)$. The preceding
lemma justifies defining $L_{1}\mbox{ : }\mathcal{M}\left(S\times A\right)\rightarrow\mathcal{C}\left(\Xi\right)$
by

\begin{align}
\left[L_{1}x\right]\left(\xi\right)\triangleq\langle x,\, u\left(z;\,\xi\right)\rangle,\hspace{0.2in} & \xi\in\Xi.\label{eq:L1_multi}
\end{align}
Also define the continuous function $y\in\mathcal{C}\left(\Xi\right)$
by $y\left(\xi\right)=\mathbb{E}\left[u\left(Y;\,\xi\right)\right]$
for all $\xi\in\Xi$ to represent the benchmark.

The steady-state version of problem (\ref{MULTI}) - (\ref{MULTI-2})
is the modified linear program:

\begin{align}
\max\hspace{0.2in} & \langle\mu,\, r\rangle\label{LP_multi}\\
\mbox{s.t.}\hspace{0.2in} & L_{0}\mu=0,\label{LP_multi-1}\\
 & \langle\mu,1\rangle=1,\label{LP_multi-2}\\
 & L_{1}\mu\geq y,\label{LP_multi-3}\\
 & \mu\in\mathcal{M}_{w}\left(K\right),\,\mu\geq0.\label{LP_multi-4}
\end{align}
Problem (\ref{LP_multi}) - (\ref{LP_multi-4}) is almost the same
as problem (\ref{LP}) - (\ref{LP-4}), except that now $L_{1}\mu$
is an element in $\mathcal{C}\left(\Xi\right)$ to reflect the multivariate
dominance constraint.

We now compute the adjoint of $L_{1}$, which depends on the choice
of family $\left\{ u\left(\cdot;\,\xi\right)\mbox{ : }\xi\in\Xi\right\} $.
The parametrization $u\left(\cdot;\,\xi\right)$ will appear explicitly
in this computation.
\begin{lem}
The adjoint of $L_{1}$ is $L_{1}^{*}\mbox{ : }\mathcal{M}\left(\Xi\right)\rightarrow\mathcal{F}_{w}\left(K\right)$
where

\[
\left[L_{1}^{*}\Lambda\right]\left(s,a\right)\triangleq\int_{\Xi}u\left(z\left(s,a\right);\,\xi\right)\Lambda\left(d\xi\right).
\]
\end{lem}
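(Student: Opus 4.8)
The plan is to mimic the computation of the adjoint of $L_1$ from the scalar case, now carrying the parametrized family $u(\cdot;\xi)$ through the bilinear pairing. First I would write out $\langle\Lambda,L_1\mu\rangle$ for $\Lambda\in\mathcal{M}(\Xi)$ and $\mu\in\mathcal{M}_w(K)$, substituting the definition \eqref{eq:L1_multi}:
\[
\langle\Lambda,L_1\mu\rangle=\int_\Xi\langle\mu,u(z;\xi)\rangle\,\Lambda(d\xi)=\int_\Xi\left[\int_K u(z(s,a);\xi)\,\mu(d(s,a))\right]\Lambda(d\xi).
\]
Then I would justify interchanging the order of integration via Fubini's theorem on the product measure $\mu\times\Lambda$, exactly as in the scalar adjoint lemma: the key estimate is that $u(z(s,a);\xi)/w(s,a)$ is bounded, so the double integral of $|u(z(s,a);\xi)|$ against $w\,d(\mu\times\Lambda)$ is finite because $\|\mu\|_{\mathcal{M}_w(K)}<\infty$ and $\|\Lambda\|_{\mathcal{M}(\Xi)}<\infty$. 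After swapping,
\[
\langle\Lambda,L_1\mu\rangle=\int_K\left[\int_\Xi u(z(s,a);\xi)\,\Lambda(d\xi)\right]\mu(d(s,a))=\langle\mu,L_1^*\Lambda\rangle,
\]
which identifies $[L_1^*\Lambda](s,a)=\int_\Xi u(z(s,a);\xi)\,\Lambda(d\xi)$ and completes the proof.

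The main obstacle — and the only place requiring care beyond bookkeeping — is verifying that $L_1^*\Lambda$ actually lands in $\mathcal{F}_w(K)$, i.e. that $\|L_1^*\Lambda\|_{\mathcal{F}_w(K)}<\infty$. For this I would need an analog of Assumption \ref{ass:bounded}(a): that $u(z(\cdot,\cdot);\xi)$ lies in $\mathcal{F}_w(K)$ uniformly in $\xi\in\Xi$, which follows if $u(z(s,a);\xi)$ is bounded on $K\times\Xi$ (guaranteed, for instance, when $z$ is bounded, $\Xi$ is compact, and $u(x;\xi)$ is jointly continuous). Under such a hypothesis,
\[
\frac{|[L_1^*\Lambda](s,a)|}{w(s,a)}\leq\int_\Xi\frac{|u(z(s,a);\xi)|}{w(s,a)}\,|\Lambda|(d\xi)\leq\sup_{\xi\in\Xi}\|u(z;\xi)\|_{\mathcal{F}_w(K)}\,\|\Lambda\|_{\mathcal{M}(\Xi)}<\infty,
\]
so the bound is uniform over $(s,a)\in K$. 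The continuity-in-$\xi$ assumption on $u$ already stated in the excerpt, together with compactness of $\Xi$, is exactly what makes this supremum finite, so no new assumption beyond what has been imposed is strictly needed; I would simply remark that these standing hypotheses ensure $u(z(\cdot,\cdot);\xi)\in\mathcal{F}_w(K)$ uniformly in $\xi$, mirroring the remark following Assumption \ref{ass:bounded}. The remainder is a routine transcription of the scalar argument with $(x-\eta)_-$ replaced by $u(x;\xi)$ and $[a,b]$ replaced by $\Xi$.
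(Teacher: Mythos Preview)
Your proposal is correct and follows essentially the same route as the paper: write out $\langle\Lambda,L_1\mu\rangle$ as an iterated integral, invoke Fubini via the bound $\|u(z(\cdot);\xi)\|_{\mathcal{F}_w(K)}\|\mu\|_{\mathcal{M}_w(K)}\|\Lambda\|_{\mathcal{M}(\Xi)}<\infty$, and swap the order of integration to read off $L_1^*$. Your additional paragraph verifying that $L_1^*\Lambda$ lands in $\mathcal{F}_w(K)$ is a detail the paper's proof leaves implicit, so if anything your argument is slightly more complete.
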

\begin{proof}
Write
\begin{align*}
\langle\Lambda,L_{1}\mu\rangle= & \int_{\Xi}\langle\mu,\, u\left(z;\,\xi\right)\rangle\Lambda\left(d\xi\right)\\
= & \int_{\Xi}\left[\int_{K}u\left(z\left(s,a\right);\,\xi\right)\rangle\mu\left(d\left(s,a\right)\right)\right]\Lambda\left(d\xi\right).
\end{align*}
When $z$ is bounded on $S\times A$, then
\[
|\int u\left(z;\,\xi\right)\left(\mu\times\Lambda\right)\left(d\left(\left(s,a\right)\times\xi\right)\right)|\leq\|u\left(z\left(\cdot\right);\,\xi\right)\|_{\mathcal{F}_{w}\left(K\right)}\,\|\mu\|_{\mathcal{M}_{w}\left(K\right)}\|\Lambda\|_{\mathcal{M}\left(\left[a,b\right]\right)}<\infty.
\]
The Fubini theorem applies to justify interchange of the order of
integration,
\begin{align*}
\langle\Lambda,L_{1}\mu\rangle= & \int_{K}\left[\int_{\Xi}u\left(z\left(s,a\right);\,\xi\right)\Lambda\left(d\xi\right)\right]\mu\left(d\left(s,a\right)\right)\\
= & \int_{K}\langle\Lambda,u\left(z\left(s,a\right);\,\xi\right)\rangle\mu\left(d\left(s,a\right)\right).
\end{align*}

\end{proof}
The dual to problem (\ref{LP_multi}) - (\ref{LP_multi-4}) looks
identical to problem (\ref{LP_dual}) - (\ref{LP_dual-2}) and is
now explicitly
\begin{align}
\inf\hspace{0.2in} & \beta-\int_{\Xi}\mathbb{E}\left[u\left(Y;\,\xi\right)\right]\Lambda\left(d\xi\right)\label{LP_multi_dual}\\
\mbox{s.t.}\hspace{0.2in} & r\left(s,a\right)+\int_{\Xi}u\left(z\left(s,a\right);\,\xi\right)\Lambda\left(d\xi\right)\leq\beta+h\left(s\right)-\int_{S}h\left(\xi\right)Q\left(d\xi\mbox{ | }s,\, a\right), & \forall\left(s,a\right)\in K,\label{LP_multi_dual-1}\\
 & \left(h,\beta,\Lambda\right)\in\mathcal{F}_{\hat{w}}\left(S\right)\times\mathbb{R}\times\mathcal{M}\left(\Xi\right),\,\Lambda\geq0.\label{LP_multi_dual-2}
\end{align}
Define
\begin{align*}
\mathcal{U}\left(\Xi\right)= & \mbox{cl}\,\mbox{cone}\,\left\{ u\left(x;\,\xi\right)\mbox{ : }\xi\in\Xi\right\} \\
= & \left\{ u\left(x\right)=\int_{\Xi}u\left(x;\,\xi\right)\Lambda\left(d\xi\right)\mbox{ for }\Lambda\in\mathcal{M}\left(\Xi\right),\,\Lambda\geq0\right\} 
\end{align*}
to be the closure of the cone of functions generated by $\left\{ u\left(x;\,\xi\right)\mbox{ : }\xi\in\Xi\right\} $.
In this case $\mathcal{U}\left(\Xi\right)$ is a family of functions
in $\mathcal{C}\left(\mathbb{R}^{n}\right)$, the space of continuous
functions $f\mbox{ : }\mathbb{R}^{n}\rightarrow\mathbb{R}$. We see
immediately that problem (\ref{LP_multi_dual}) - (\ref{LP_multi_dual-2})
is equivalent to
\begin{align}
\inf\hspace{0.2in} & \beta-\mathbb{E}\left[u\left(Y\right)\right]\label{LP_multi_dual-3}\\
\mbox{s.t.}\hspace{0.2in} & r\left(s,a\right)+u\left(z\left(s,a\right)\right)\leq\beta+h\left(s\right)-\int_{S}h\left(\xi\right)Q\left(d\xi\mbox{ | }s,\, a\right), & \forall\left(s,a\right)\in K,\label{LP_multi_dual-4}\\
 & \left(h,\beta,u\right)\in\mathcal{F}_{\hat{w}}\left(S\right)\times\mathbb{R}\times\mathcal{U}\left(\Xi\right).\label{LP_multi_dual-5}
\end{align}
The variables $u\in\mathcal{U}\left(\Xi\right)$ in problem (\ref{LP_multi_dual-3})
- (\ref{LP_multi_dual-5}) are now pricing variables for the vector
$z$. When our earlier assumptions are suitably adapted, then strong
duality holds between problem (\ref{LP_multi}) - (\ref{LP_multi-4})
and problem (\ref{LP_multi_dual-3}) - (\ref{LP_multi_dual-5}).
\begin{thm}
The optimal values of problems (\ref{LP_multi}) - (\ref{LP_multi-4})
and (\ref{LP_multi_dual}) - (\ref{LP_multi_dual-2}) are equal. Further,
the dual problem (\ref{LP_multi_dual}) - (\ref{LP_multi_dual-2})
is solvable and strong duality holds between problems (\ref{LP_multi})
- (\ref{LP_multi-4}) and (\ref{LP_multi_dual}) - (\ref{LP_multi_dual-2}).
\end{thm}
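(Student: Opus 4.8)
The plan is to mirror the three strong-duality results already proved for the univariate problem (\ref{LP}) - (\ref{LP-4}), since problem (\ref{LP_multi}) - (\ref{LP_multi-4}) differs only in that the range of $L_{1}$ is $\mathcal{C}\left(\Xi\right)$ instead of $\mathcal{C}\left(\left[a,b\right]\right)$, with $\Xi\subset\mathbb{R}^{p}$ again compact. First I would note that the primal is solvable by repeating the earlier solvability argument essentially verbatim, now with the constraints $\langle\mu,u\left(z;\,\xi\right)\rangle\geq y\left(\xi\right)$, $\xi\in\Xi$, in place of $\langle\mu,\left(z-\eta\right)_{-}\rangle\geq y\left(\eta\right)$: each $u\left(z\left(s,a\right);\,\xi\right)$ is bounded (as $z$ is bounded and $u\left(\cdot;\,\xi\right)$ is continuous) and upper semicontinuous in $\left(s,a\right)$ (the composition of the usc $z$ with the increasing continuous $u\left(\cdot;\,\xi\right)$), so the weak-convergence-of-occupation-measures argument goes through unchanged. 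Next I would establish the absence of a duality gap via \cite[Theorem 12.3.4]{HL99}, which rests on \cite[Theorem 3.9]{AN87}, and finally the solvability of the dual via the obvious analog of assumption \ref{ass:strong} together with the Banach--Alaoglu theorem and Fatou's lemma.

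For the no-gap step, introduce the slack $\alpha\in\mathcal{C}\left(\Xi\right)$ for $L_{1}\mu\geq y$ and show that
\[
H\triangleq\left\{ \left(L_{0}\mu,\,\langle\mu,1\rangle,\, L_{1}\mu-\alpha,\,\langle\mu,r\rangle-\zeta\right)\mbox{ : }\mu\geq0,\,\alpha\geq0,\,\zeta\geq0\right\}
\]
is weakly closed in $\mathcal{M}_{\hat{w}}\left(S\right)\times\mathbb{R}\times\mathcal{C}\left(\Xi\right)\times\mathbb{R}$. Given a net $\left\{\left(\mu_{\kappa},\alpha_{\kappa},\zeta_{\kappa}\right)\right\}$ of nonnegative elements whose image under the constraint map converges weakly to $\left(\nu^{*},\gamma^{*},f^{*},\rho^{*}\right)$, the three coordinates $\nu^{*}=L_{0}\mu$, $\gamma^{*}=\langle\mu,1\rangle$, $\rho^{*}=\langle\mu,r\rangle-\zeta$ with $\mu\geq0$, $\zeta\geq0$ come directly from \cite[Theorem 12.3.4]{HL99}; only the $\mathcal{C}\left(\Xi\right)$ coordinate is new. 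Testing the weak limit against the Dirac measures $\delta_{\xi}\in\mathcal{M}\left(\Xi\right)$, $\xi\in\Xi$, gives $\left[L_{1}\mu_{\kappa}\right]\left(\xi\right)-\alpha_{\kappa}\left(\xi\right)\rightarrow f^{*}\left(\xi\right)$ pointwise on the compact set $\Xi$, hence uniformly; since $L_{1}\mu_{\kappa}$ and $f^{*}$ both lie in $\mathcal{C}\left(\Xi\right)$ — the former by the preceding lemma on continuity of $\langle\mu,u\left(z;\,\xi\right)\rangle$ in $\xi$ — setting $\alpha\triangleq L_{1}\mu-f^{*}\in\mathcal{C}\left(\Xi\right)$ produces a necessarily nonnegative slack with $f^{*}=L_{1}\mu-\alpha$, so $\left(\nu^{*},\gamma^{*},f^{*},\rho^{*}\right)\in H$. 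This delivers $\rho^{*}=\inf\left\{\beta-\langle\Lambda,y\rangle\right\}$ over the dual feasible set, i.e. the equality of optimal values.

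For the dual-solvability step, assume a minimizing sequence $\left(h^{n},\beta^{n},\Lambda^{n}\right)$ for (\ref{LP_multi_dual}) - (\ref{LP_multi_dual-2}) with $\left\{\beta^{n}\right\}$ bounded in $\mathbb{R}$, $\left\{h^{n}\right\}$ bounded in $\mathcal{F}_{\hat{w}}\left(S\right)$, and $\left\{\Lambda^{n}\right\}$ bounded in the weak{*} topology on $\mathcal{M}\left(\Xi\right)$. Pass to a subsequence with $\beta^{n}\rightarrow\beta^{*}$. Since $\Xi$ is a compact metric space, $\mathcal{C}\left(\Xi\right)$ is separable (the polynomials with rational coefficients restricted to $\Xi$ are dense by Stone--Weierstrass), so the weak{*} topology on bounded subsets of $\mathcal{M}\left(\Xi\right)$ is metrizable and Banach--Alaoglu yields a further subsequence with $\Lambda^{n}\rightarrow\Lambda^{*}$ weak{*}. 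Because $\xi\mapsto\mathbb{E}\left[u\left(Y;\,\xi\right)\right]$ and $\xi\mapsto u\left(z\left(s,a\right);\,\xi\right)$ are continuous on $\Xi$, the objective term and the left side of constraint (\ref{LP_multi_dual-1}) converge to the corresponding $\Lambda^{*}$-expressions; putting $h^{*}\left(s\right)\triangleq\liminf_{n\rightarrow\infty}h^{n}\left(s\right)$, which stays bounded in $\mathcal{F}_{\hat{w}}\left(S\right)$, and applying Fatou's lemma to $\int_{S}h^{n}\left(\xi\right)Q\left(d\xi\,\vert\, s,a\right)$ shows $\left(h^{*},\beta^{*},\Lambda^{*}\right)$ is dual feasible and attains $\rho^{*}$; writing $u^{*}\left(x\right)=\int_{\Xi}u\left(x;\,\xi\right)\Lambda^{*}\left(d\xi\right)\in\mathcal{U}\left(\Xi\right)$ then gives an optimal solution of the equivalent problem (\ref{LP_multi_dual-3}) - (\ref{LP_multi_dual-5}), so strong duality holds.

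The one genuinely new step is the verification that the $\mathcal{C}\left(\Xi\right)$-valued coordinate of $H$ is preserved under weak limits, i.e. that $f^{*}$ factors as $L_{1}\mu-\alpha$ with $\alpha\geq0$ continuous; this is where compactness of $\Xi$ (for separability of $\mathcal{C}\left(\Xi\right)$ and for passing from pointwise to uniform convergence), boundedness of $z$, and the assumed continuity of $u\left(x;\,\xi\right)$ in $\xi$ all enter. Everything else is a transcription of the univariate arguments; the only additional housekeeping is to record the obvious analogs of assumptions \ref{ass:reduction} and \ref{ass:bounded}, in particular that $u\left(z\left(\cdot\right);\,\xi\right)\in\mathcal{F}_{w}\left(K\right)$ uniformly over the compact parameter set $\Xi$, which makes $L_{1}$ and $L_{1}^{*}$ well defined between the stated spaces.
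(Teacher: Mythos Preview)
Your proposal is correct and follows exactly the approach the paper intends: the paper does not give a separate proof of this theorem but simply asserts it after the sentence ``When our earlier assumptions are suitably adapted, then strong duality holds,'' leaving the reader to transcribe the univariate arguments with $\Xi$ in place of $[a,b]$ and $u(\cdot;\xi)$ in place of $(\cdot-\eta)_{-}$, which is precisely what you have done. Your write-up is in fact more explicit than the paper on the needed adaptations (the analogs of assumptions~\ref{ass:reduction}, \ref{ass:bounded}, and \ref{ass:strong}, separability of $\mathcal{C}(\Xi)$ via Stone--Weierstrass, etc.), and the one step you single out as new---the $\mathcal{C}(\Xi)$ coordinate of the weak-closure set $H$---is handled the same way the paper handles the $\mathcal{C}([a,b])$ coordinate in the earlier no-gap theorem.
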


\subsection{Discounted reward}

We briefly sketch the development for discounted reward, it is mostly
similar. Discounted cost MDPs in Borel spaces with finitely many constraints
are considered in \cite{HG00}. Introduce the discount factor $\delta\in\left(0,1\right)$
and consider the long-run expected discounted reward

\[
R\left(\pi,\nu\right)=\mathbb{E}_{\nu}^{\pi}\left[\sum_{t=0}^{\infty}\delta^{t}r\left(s_{t},a_{t}\right)\right].
\]
We are interested in the distribution of discounted reward $z$,
\[
\sum_{t=0}^{\infty}\delta^{t}z\left(s_{t},a_{t}\right).
\]

Define
\[
Z_{\eta}\left(\pi,\nu\right)\triangleq\mathbb{E}_{\nu}^{\pi}\left[\sum_{t=0}^{\infty}\delta^{t}\left(z\left(s_{t},a_{t}\right)-\eta\right)_{-}\right].
\]
We propose the dominance-constrained MDP:

\begin{align}
\sup\hspace{0.2in} & R\left(\pi,\nu\right)\label{DISCOUNT}\\
\mbox{s.t.}\hspace{0.2in} & Z_{\eta}\left(\pi,\nu\right)\geq\mathbb{E}\left[\left(Y-\eta\right)_{-}\right], & \forall\eta\in\left[a,b\right],\label{DISCOUNT-1}\\
 & \pi\in\Pi.\label{DISCOUNT-2}
\end{align}

We work with the $\delta-$discounted expected occupation measure
\[
\mu_{\nu}^{\pi}\left(\Gamma\right)\triangleq\sum_{t=0}^{\infty}\delta^{t}P_{\nu}^{\pi}\left(\left(s_{t},a_{t}\right)\in\Gamma\right)
\]
for all $\Gamma\in\mathcal{B}\left(S\times A\right)$. Now let
\begin{align}
\left[L_{0}\mu\right]\left(B\right)\triangleq\hat{\mu}\left(B\right)-\delta\int_{S\times A}Q\left(B\mbox{ | }s,a\right)\mu\left(d\left(s,a\right)\right),\hspace{0.2in} & \forall B\in\mathcal{B}\left(S\right),\label{eq:L0_discount}
\end{align}
and
\begin{align}
\left[L_{1}\mu\right]\left(\eta\right)\triangleq\langle\mu,\left(z-\eta\right)_{-}\rangle,\hspace{0.2in} & \forall\eta\in\left[a,b\right].\label{eq:L1_discount}
\end{align}
Also continue to define $y\in\mathcal{C}\left(\left[a,b\right]\right)$
by $y\left(\eta\right)=\mathbb{E}\left[\left(Y-\eta\right)_{-}\right]$
for all $\eta\in\left[a,b\right]$. Problem (\ref{DISCOUNT}) - (\ref{DISCOUNT-2})
is then equivalent to the linear program
\begin{align}
\max\hspace{0.2in} & \langle\mu,\, r\rangle\label{LP_DISCOUNT}\\
\mbox{s.t.}\hspace{0.2in} & L_{0}\mu=\nu,\label{LP_DISCOUNT-1}\\
 & L_{1}\mu\geq y,\label{LP_DISCOUNT-2}\\
 & \mu\in\mathcal{M}\left(K\right),\,\mu\geq0.\label{LP_DISCOUNT-3}
\end{align}

Introduce Lagrange multipliers $h\in\mathcal{F}_{\hat{w}}\left(S\right)$
for constraint $L_{0}\mu=\nu$ and multipliers $\Lambda\in\mathcal{M}\left(\left[a,b\right]\right)$
for constraint $L_{1}\mu\geq y$, the Lagrangian is then 
\[
\vartheta\left(\mu,h,\Lambda\right)=\langle\mu,r\rangle+\langle h,L_{0}\mu-\nu\rangle+\langle\Lambda,L_{1}\mu-y\rangle.
\]
The adjoint of $L_{0}$ is $L_{0}^{*}\mbox{ : }\mathcal{F}_{\hat{w}}\left(S\right)\rightarrow\mathcal{F}_{w}\left(S\times A\right)$
defined by
\[
\left[L_{0}^{*}h\right]\left(s,a\right)\triangleq h\left(s\right)-\delta\int_{S}h\left(\xi\right)Q\left(d\xi\mbox{ | }s,a\right).
\]
The adjoint of $L_{1}$ is still $L_{1}^{*}\mbox{ : }\mathcal{M}\left(\left[a,b\right]\right)\rightarrow\mathcal{F}_{w}\left(S\times A\right)$
where

\[
\left[L_{1}^{*}\Lambda\right]\left(s,a\right)\triangleq\int_{a}^{b}\left(z\left(s,a\right)-\eta\right)_{-}\Lambda\left(d\eta\right).
\]
The form of the dual follows.
\begin{thm}
The dual to problem (\ref{LP_DISCOUNT}) - (\ref{LP_DISCOUNT-3})
is
\begin{align}
\min\hspace{0.2in} & \langle h,\nu\rangle-\langle\Lambda,y\rangle\label{LP_DISCOUNT_dual}\\
\mbox{s.t.}\hspace{0.2in} & r+L_{0}^{*}h+L_{1}^{*}\Lambda\geq0,\label{LP_DISCOUNT_dual-1}\\
 & h\in\mathcal{F}_{w}\left(K\right),\,\Lambda\in\mathcal{M}\left(\left[a,b\right]\right),\,\Lambda\geq0.\label{LP_DISCOUNT_dual-2}
\end{align}
The optimal values of problems (\ref{LP_DISCOUNT}) - (\ref{LP_DISCOUNT-3})
and (\ref{LP_DISCOUNT_dual}) - (\ref{LP_DISCOUNT_dual-2}) are equal,
and problem (\ref{LP_DISCOUNT_dual}) - (\ref{LP_DISCOUNT_dual-2})
is solvable.
\end{thm}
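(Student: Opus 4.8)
The plan is to reproduce, in the discounted setting, the three-part development already carried out for the average reward problem: derive the dual from the Lagrangian, establish zero duality gap via an infinite-dimensional linear programming duality theorem, and then show the dual is solvable. For the form of the dual, I would start from the stated Lagrangian $\vartheta\left(\mu,h,\Lambda\right)=\langle\mu,r\rangle+\langle h,L_{0}\mu-\nu\rangle+\langle\Lambda,L_{1}\mu-y\rangle$ and regroup it through the two adjoints just recorded, obtaining
\[
\vartheta\left(\mu,h,\Lambda\right)=\langle\mu,\,r+L_{0}^{*}h+L_{1}^{*}\Lambda\rangle-\langle h,\nu\rangle-\langle\Lambda,y\rangle.
\]
Problem (\ref{LP_DISCOUNT})--(\ref{LP_DISCOUNT-3}) is $\max_{\mu\geq0}\inf_{h;\,\Lambda\geq0}\vartheta$; interchanging to form $\inf_{h;\,\Lambda\geq0}\max_{\mu\geq0}\vartheta$, the inner maximum over $\mu\geq0$ is finite only when the coefficient of $\mu$ is appropriately signed, and in that case it contributes $-\langle h,\nu\rangle-\langle\Lambda,y\rangle$, so after the sign change $h\mapsto-h$ that normalizes the objective one arrives at (\ref{LP_DISCOUNT_dual})--(\ref{LP_DISCOUNT_dual-2}). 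Consistency of the dual then follows by inspection, using $r\leq0$ (the discounted analog of Assumption \ref{ass:reduction}(b)).

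For equality of the optimal values, I would invoke the same infinite-dimensional linear programming duality result used before, \cite[Theorem 12.3.4]{HL99} (ultimately \cite[Theorem 3.9]{AN87}), now in the discounted formulation; the parallel analysis of discounted constrained MDPs on Borel spaces is exactly \cite{HG00}. Introducing slack variables $\alpha\in\mathcal{C}\left(\left[a,b\right]\right)$ with $\alpha\geq0$ for $L_{1}\mu\geq y$ and $\zeta\geq0$ for the objective, the crux is to show that
\[
H\triangleq\left\{ \left(L_{0}\mu,\;L_{1}\mu-\alpha,\;\langle\mu,r\rangle-\zeta\right)\mbox{ : }\mu\geq0,\,\alpha\geq0,\,\zeta\geq0\right\}
\]
is weakly closed in $\mathcal{M}_{\hat{w}}\left(S\right)\times\mathcal{C}\left(\left[a,b\right]\right)\times\mathbb{R}$. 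The $\left(L_{0}\mu,\langle\mu,r\rangle-\zeta\right)$ components close up exactly as in \cite{HG00} and the discounted counterpart of \cite[Theorem 12.3.4]{HL99}: the discount factor forces every feasible $\mu$ to have total mass $1/\left(1-\delta\right)$, which streamlines matters relative to the average case, though the moment condition on $-r$ is still what yields the weak compactness used to extract limit points. The genuinely new piece is the $\mathcal{C}\left(\left[a,b\right]\right)$-valued component, handled verbatim as in the no-gap theorem for the average case: test weak convergence of $L_{1}\mu_{\kappa}-\alpha_{\kappa}$ against Dirac measures $\delta_{\eta}$, $\eta\in\left[a,b\right]$, to get pointwise convergence, upgrade this to uniform convergence since $\left[a,b\right]$ is compact, and read off that the limit equals $L_{1}\mu-\alpha$ with $\alpha\geq0$.

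For solvability of the dual, I would copy the proof of the earlier strong-duality theorem under the discounted analog of Assumption \ref{ass:strong}, now with no $\beta$ variable present: take a minimizing sequence $\left(h^{n},\Lambda^{n}\right)$ with $\left\{ h^{n}\right\} $ bounded in $\mathcal{F}_{\hat{w}}\left(S\right)$ and $\left\{ \Lambda^{n}\right\} $ bounded in the weak* topology on $\mathcal{M}\left(\left[a,b\right]\right)$. Separability of $\mathcal{C}\left(\left[a,b\right]\right)$ and the Banach--Alaoglu theorem give a weak*-convergent subsequence $\Lambda^{n}\to\Lambda^{*}$, hence $\langle\Lambda^{n},y\rangle\to\langle\Lambda^{*},y\rangle$ and $\int_{a}^{b}\left(z\left(s,a\right)-\eta\right)_{-}\Lambda^{n}\left(d\eta\right)\to\int_{a}^{b}\left(z\left(s,a\right)-\eta\right)_{-}\Lambda^{*}\left(d\eta\right)$ for every $\left(s,a\right)$; setting $h^{*}\left(s\right)\triangleq\liminf_{n}h^{n}\left(s\right)$, which remains in $\mathcal{F}_{\hat{w}}\left(S\right)$, an application of Fatou's lemma to $\int_{S}h^{n}\left(\xi\right)Q\left(d\xi\,\vert\,s,a\right)$ lets me pass to the limit in (\ref{LP_DISCOUNT_dual-1}) and in the objective and conclude that $\left(h^{*},\Lambda^{*}\right)$ is optimal. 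I expect the main obstacle to be the weak closedness of $H$ --- specifically, tracking which of the discounted versions of the standing assumptions are actually used so that the $L_{0}$- and $r$-components close; by contrast, the dominance component closes for free because $\left[a,b\right]$ is compact.
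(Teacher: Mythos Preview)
Your proposal is correct and matches the paper's approach: the paper gives no explicit proof for this theorem, stating only that the discounted development ``is mostly similar'' and that ``the form of the dual follows,'' so the intended argument is precisely the three-part parallel you sketch --- Lagrangian rearrangement via the adjoints, weak closedness of the image set $H$ (with the $\mathcal{C}([a,b])$ component handled exactly as in the average-reward no-gap theorem, and the remaining components via \cite{HG00}), and solvability by extracting limits of a bounded minimizing sequence using Banach--Alaoglu and Fatou. Your observation that the discounted case dispenses with the $\beta$ variable and fixes the total mass of $\mu$ at $1/(1-\delta)$ is the only structural difference, and you have identified it correctly.
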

This dual is explicitly
\begin{align}
\min\hspace{0.2in} & \langle h,\nu\rangle-\mathbb{E}\left[u\left(Y\right)\right]\label{LP_DISCOUNT_dual-3}\\
\mbox{s.t.}\hspace{0.2in} & r\left(s,a\right)+u\left(z\left(s,a\right)\right)\leq h\left(s\right)-\delta\int_{S}h\left(\xi\right)Q\left(d\xi\mbox{ | }s,\, a\right), & \forall\left(s,a\right)\in K,\label{LP_DISCOUNT_dual-4}\\
 & h\in\mathcal{F}_{w}\left(K\right),\, u\in\mathcal{U}\left(\left[a,b\right]\right).\label{LP_DISCOUNT_dual-5}
\end{align}
Problem (\ref{LP_DISCOUNT_dual-3}) - (\ref{LP_DISCOUNT_dual-5})
leads to a modified set of optimality equations for the infinite horizon
discounted reward case, namely
\[
h\left(s\right)=\max_{a\in A\left(s\right)}\left\{ r\left(s,a\right)+u\left(z\left(s,a\right)\right)+\delta\int_{S}h\left(\xi\right)Q\left(d\xi\mbox{ | }s,\, a\right)\right\} 
\]
for all $s\in S$.

\subsection{Approximate linear programming}

Various approaches have been put forward for solving infinite-dimensional
LPs with sequences of finite-dimensional LPs, such as in \cite{hernandez-lerma98,mendiondo08}.
Approximate linear programming (ALP) has been put forward as an approach
to the curse of dimensionality, and it can be applied to our present
setting. The average reward linear program (\ref{LP}) - (\ref{LP-4})
and the discounted reward linear program (\ref{LP_DISCOUNT}) - (\ref{LP_DISCOUNT-3})
generally have uncountably many variables and constraints.

ALP for average cost dynamic programming is developed in \cite{FR06}.
Previous work on ALP for dynamic programming has focused on approximating
the cost-to-go function $h$ rather than the steady-state occupation
measure $\mu$. It is more intuitive to design basis functions for
the cost-to-go function than the occupation measure. For problem (\ref{LP})
- (\ref{LP-4}), we approximate the cost-to-go function $h\in\mathcal{F}_{\hat{w}}\left(S\right)$
with the basis functions $\left\{ \phi_{1},\ldots,\phi_{m}\right\} \subset\mathcal{F}_{\hat{w}}\left(S\right)$.
We approximate the pricing variable $u\in\mathcal{U}\left(\left[a,b\right]\right)$
with basis functions $\left\{ u_{1},\ldots,u_{n}\right\} \subset\mathcal{U}\left(\left[a,b\right]\right)$.
The resulting approximate linear program is
\begin{align}
\min\hspace{0.2in} & \beta-\mathbb{E}\left[\sum_{i=1}^{n}\alpha_{i}u_{i}\left(Y\right)\right]\label{ALP}\\
\mbox{s.t.}\hspace{0.2in} & r\left(s,a\right)+\sum_{i=1}^{n}\alpha_{i}u_{i}\left(z\left(s,a\right)\right)\leq\beta+\sum_{j=1}^{m}\gamma_{j}h_{j}\left(s\right)-\int_{S}\left[\sum_{j=1}^{m}\gamma_{j}h_{j}\right]\left(\xi\right)Q\left(d\xi\,\vert\, s,\, a\right), & \forall\left(s,a\right)\in K,\label{ALP-1}\\
 & \left(\gamma,\beta,\alpha\right)\in\mathbb{R}^{m}\times\mathbb{R}\times\mathbb{R}^{n}.\label{ALP-2}
\end{align}
We are justified in writing minimization instead of infimum in problem
(\ref{ALP}) - (\ref{ALP-2}) because there are only finitely many
decision variables. ALP has been studied extensively for the linear
programming representation of the optimality equations for discounted
infinite horizon dynamic programming (see \cite{FR03,FR04,DFM09}).
The discounted approximate linear program is
\begin{align}
\min\hspace{0.2in} & \langle h,\nu\rangle-\mathbb{E}\left[\sum_{i=1}^{n}\alpha_{i}u_{i}\left(Y\right)\right]\label{ALP_DISCOUNT}\\
\mbox{s.t.}\hspace{0.2in} & r\left(s,a\right)+\sum_{i=1}^{n}\alpha_{i}u_{i}\left(z\left(s,a\right)\right)\leq\sum_{j=1}^{m}\gamma_{j}h_{j}\left(s\right)-\delta\int_{S}\left[\sum_{j=1}^{m}\gamma_{j}h_{j}\right]\left(\xi\right)Q\left(d\xi\mbox{ | }s,\, a\right), & \forall\left(s,a\right)\in K,\label{ALP_DISCOUNT-1}\\
 & \left(\gamma,\alpha\right)\in\mathbb{R}^{m}\times\mathbb{R}^{n}.\label{ALP_DISCOUNT-2}
\end{align}
Both problems (\ref{ALP}) - (\ref{ALP-2}) and (\ref{ALP_DISCOUNT})
- (\ref{ALP_DISCOUNT-2}) are restrictions of the corresponding problems
(\ref{LP_dual-6}) - (\ref{LP_dual-8}) and (\ref{LP_DISCOUNT_dual-3})
- (\ref{LP_DISCOUNT_dual-5}).

Problems (\ref{ALP}) - (\ref{ALP-2}) and (\ref{ALP_DISCOUNT}) -
(\ref{ALP_DISCOUNT-2}) have a manageable number of decision variables
but an intractable number of constraints. Constraint sampling has
been a prominent tool in ALP, and we cite a relevant result now. Let

\begin{align}
\langle\gamma_{z},r\rangle+\kappa_{z}\geq0,\hspace{0.2in} & \forall z\in\mathcal{L},\label{eq:linearconstraints}
\end{align}
be a set of linear inequalities in the variables $r\in\mathbb{R}^{k}$
indexed by an arbitrary set $\mathcal{L}$. Let $\psi$ be a probability
distribution on $\mathcal{L}$, we would like to take i.i.d. samples
from $\mathcal{L}$ to construct a set $\mathcal{W}\subseteq\mathcal{L}$
with
\[
\sup_{\left\{ r\mbox{ | }\langle\gamma_{z},r\rangle+\kappa_{z}\geq0,\,\forall z\in\mathcal{W}\right\} }\psi\left(\left\{ y\mbox{ : }\langle\gamma_{y},r\rangle+\kappa_{y}<0\right\} \right)\leq\epsilon.
\]

\begin{thm}
\cite[Theorem 2.1]{FR04} For any $\delta\in\left(0,1\right)$ and
$\epsilon\in\left(0,1\right)$, and

\[
m\geq\frac{4}{\epsilon}\left(k\,\ln\frac{12}{\epsilon}+\ln\frac{2}{\delta}\right),
\]
a set $\mathcal{W}$ of $m$ i.i.d. samples drawn from $\mathcal{L}$
according to distribution $\psi$, satisfies
\[
\sup_{\left\{ r\,\vert\,\langle\gamma_{z},r\rangle+\kappa_{z}\geq0,\,\forall z\in\mathcal{W}\right\} }\psi\left(\left\{ y\mbox{ : }\langle\gamma_{y},r\rangle+\kappa_{y}<0\right\} \right)\leq\epsilon
\]
with probability at least $1-\delta$.
\end{thm}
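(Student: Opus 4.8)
The plan is to recognize this statement as an instance of the $\epsilon$-net theorem from Vapnik--Chervonenkis theory. Given the family of linear inequalities $\langle\gamma_{z},r\rangle+\kappa_{z}\geq0$, $z\in\mathcal{L}$, with decision variable $r\in\mathbb{R}^{k}$, associate to every $r$ its \emph{violation set} $B_{r}\triangleq\left\{ y\in\mathcal{L}\mbox{ : }\langle\gamma_{y},r\rangle+\kappa_{y}<0\right\}$, and set $\mathcal{R}\triangleq\left\{ B_{r}\mbox{ : }r\in\mathbb{R}^{k}\right\}$. A point $r$ satisfies all constraints indexed by a finite sample $\mathcal{W}\subseteq\mathcal{L}$ precisely when $\mathcal{W}\cap B_{r}=\emptyset$, so the asserted conclusion is exactly that $\mathcal{W}$ is an $\epsilon$-net for the range space $\left(\mathcal{L},\mathcal{R}\right)$ with respect to $\psi$: every range of $\psi$-measure exceeding $\epsilon$ meets $\mathcal{W}$. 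The whole statement thus reduces to (i) bounding the combinatorial complexity of $\mathcal{R}$, and (ii) invoking the quantitative $\epsilon$-net theorem with that bound.

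For (i), fix any finite $\left\{ y_{1},\ldots,y_{n}\right\} \subseteq\mathcal{L}$. For each $i$ the map $r\mapsto\langle\gamma_{y_{i}},r\rangle+\kappa_{y_{i}}$ is affine on $\mathbb{R}^{k}$, so the distinct traces $B_{r}\cap\left\{ y_{1},\ldots,y_{n}\right\}$ are in bijection with the sign patterns realized by points of a hyperplane arrangement of at most $n$ hyperplanes in $\mathbb{R}^{k}$. The number of such patterns is bounded by the face count of the arrangement, which is $O\!\left(n^{k}\right)$ for fixed $k$; equivalently, the shatter function of $\mathcal{R}$ is a polynomial of degree $k$ in the sample size (so $\mathrm{VCdim}(\mathcal{R})\leq k+1$). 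It is this polynomial degree $k$ that produces the factor $k$ in the final sample bound.

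For (ii), I would run the classical symmetrization/double-sampling argument. If an i.i.d. sample $\mathcal{W}=\left(z_{1},\ldots,z_{m}\right)$ from $\psi$ fails to be an $\epsilon$-net, pick a witness $r$ with $\psi(B_{r})>\epsilon$ and $\mathcal{W}\cap B_{r}=\emptyset$; draw an independent ghost sample $\mathcal{W}'=\left(z_{1}',\ldots,z_{m}'\right)$, and note that a binomial tail bound makes $\left|\mathcal{W}'\cap B_{r}\right|\geq\epsilon m/2$ with probability at least $1/2$ once $m$ is large enough, as guaranteed by the hypothesis. Hence the failure probability is at most twice the probability that some $B_{r}$ avoids $\mathcal{W}$ yet receives at least $\epsilon m/2$ points of $\mathcal{W}'$. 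This last event depends on $B_{r}$ only through its intersection with the $2m$ points $\mathcal{W}\cup\mathcal{W}'$, so there are at most $O\!\left((2m)^{k}\right)$ distinct events; by exchangeability of the $2m$ points between $\mathcal{W}$ and $\mathcal{W}'$, each such event has conditional probability at most $2^{-\epsilon m/2}$. A union bound gives failure probability at most $2\cdot O\!\left((2m)^{k}\right)\cdot2^{-\epsilon m/2}$, and forcing this below $\delta$ yields the stated threshold $m\geq\frac{4}{\epsilon}\left(k\ln\frac{12}{\epsilon}+\ln\frac{2}{\delta}\right)$ after standard algebra. I expect the main obstacle to be precisely this final bookkeeping: pinning down the implicit constant in the $O\!\left((2m)^{k}\right)$ shatter bound and then inverting the self-referential inequality $m\gtrsim\tfrac{1}{\epsilon}\!\left(k\ln m+\ln\tfrac{1}{\delta}\right)$ cleanly enough to land on the specific constants $4$, $12$, and $2$ — which is where one must appeal to the sharp form of the $\epsilon$-net theorem (as in \cite{FR04} and the learning-theory references therein) rather than a crude estimate.
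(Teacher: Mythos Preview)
Your proposal is a correct and standard proof sketch via VC theory and the $\epsilon$-net theorem, but there is nothing to compare it against: the paper does not prove this statement at all. The theorem is stated with the citation \cite[Theorem 2.1]{FR04} and immediately applied, with no proof given; it is imported wholesale from de Farias and Van Roy as a known constraint-sampling result. Your outline --- bounding the shatter function of the violation sets $B_{r}$ by the face count of a $k$-dimensional hyperplane arrangement, then running the double-sampling/symmetrization argument --- is precisely the content of the cited source, so you have reconstructed the proof that the present paper deliberately omits. Your own caveat about the final bookkeeping (extracting the specific constants $4$, $12$, $2$) is accurate: that step does require the sharp $\epsilon$-net bound rather than the crude union-bound estimate you wrote down, but since the paper defers the entire argument to \cite{FR04} this is not a discrepancy with anything here.
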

Thus, we can sample state-action pairs from any distribution $\psi$
on $K$ to obtain tractable relaxations of problems (\ref{ALP}) -
(\ref{ALP-2}) and (\ref{ALP_DISCOUNT}) - (\ref{ALP_DISCOUNT-2})
with probabilistic feasibility guarantees. Note that the number of
samples required is $O\left(\frac{1}{\epsilon}\ln\frac{1}{\epsilon},\ln\frac{1}{\delta}\right)$.

\subsection{Finite state and action spaces}

The development for finite state and action spaces is much simpler.
Now both problems (\ref{LP}) - (\ref{LP-4}) and (\ref{LP_dual})
- (\ref{LP_dual-2}) are usual linear programming problems with finitely
many variables and constraints. The usual linear programming duality
theory applies immediately to establish strong duality between these
two problems.

For this section, let $x$ denote an occupation measure on $K$ to
emphasize that it is finite-dimensional. Also suppose the benchmark
$Y$ has finite support $\mbox{supp}\, Y=\left\{ \eta_{1},\ldots,\eta_{q}\right\} \subset\mathbb{R}$,
so that constraint (\ref{DOMINANCE-2}) is equivalent to

\begin{align}
\mathbb{E}_{x}\left[\left(z\left(s,a\right)-\eta\right)_{-}\right]\geq\mathbb{E}\left[\left(Y-\eta\right)_{-}\right],\hspace{0.2in} & \forall\eta\in\mbox{supp}\, Y,\label{eq:average_dominance}
\end{align}
by \cite[Proposition 3.2]{Dentcheva2003}. Each expectation 
\[
\mathbb{E}_{x}\left[\left(z\left(s,a\right)-\eta\right)_{-}\right]=\sum_{\left(s,a\right)\in K}x\left(s,a\right)\left(z\left(s,a\right)-\eta\right)_{-}
\]
is a linear function of $x$.

For finite state and action spaces, the steady-state version of problem
(\ref{DOMINANCE}) - (\ref{DOMINANCE-2}) is:

\begin{align}
\max\hspace{0.2in} & \sum_{\left(s,a\right)\in K}r\left(s,a\right)x\left(s,a\right)\label{FINITE}\\
\mbox{s.t.}\hspace{0.2in} & \sum_{a\in A_{s}}x\left(j,a\right)-\sum_{s\in S}\sum_{a\in A\left(s\right)}P\left(j\mbox{ | }s,a\right)x\left(s,a\right)=0, & \forall j\in S,\label{FINITE-1}\\
 & \sum_{\left(s,a\right)\in\Psi}x\left(s,a\right)=1,\label{FINITE-2}\\
 & \mathbb{E}_{x}\left[\left(z\left(s,a\right)-\eta\right)_{-}\right]\geq\mathbb{E}\left[\left(Y-\eta\right)_{-}\right], & \forall\eta\in\mbox{supp}\, Y,\label{FINITE-3}\\
 & x\geq0.\label{FINITE-4}
\end{align}
Duality for problem (\ref{LP}) - (\ref{LP-4}) is immediate from
linear programming duality. As discussed in \cite[Chapter 8]{Put05},
the dual of the linear programming problem without the dominance constraints
is

\begin{align*}
\min\hspace{0.2in} & g\\
\mbox{s.t.}\hspace{0.2in} & g+h\left(s\right)-\sum_{j\in S}P\left(j\mbox{ | }s,\, a\right)h\left(j\right)\geq r\left(s,a\right), & \forall\left(s,a\right)\in K,\\
 & g\in\mathbb{R},\, h\in\mathbb{R}^{|s|}.
\end{align*}
The vector $h$ is interpreted as the average cost-to-go function.
To proceed with the dual for problem (\ref{LP}) - (\ref{LP-4}),
let $\lambda\in\mathbb{R}^{|\mathcal{Y}|}$ with $\lambda\geq0$ and
consider the piecewise linear increasing concave function
\[
u\left(\xi\right)=\sum_{\eta\in\mathcal{Y}}\lambda\left(\eta\right)\left(\xi-\eta\right)_{-}
\]
with breakpoints at $\eta\in\mathcal{Y}$. The above function $u\left(\xi\right)$
can be interpreted as a utility function for a risk-averse decision
maker. We define
\begin{align*}
\mathcal{U}\left(\mathcal{Y}\right)= & \mbox{cl}\,\mbox{cone}\,\left\{ \left(x-\eta\right)_{-}\mbox{ : }\eta\in\mathcal{Y}\right\} \\
= & \left\{ u\left(x\right)=\sum_{\eta\in\mathcal{Y}}\lambda\left(\eta\right)\left(x-\eta\right)_{-}\mbox{ for }\lambda\in\mathbb{R}^{|\mathcal{Y}|},\,\lambda\geq0\right\} 
\end{align*}
to be the set of all such functions. Since $\mathcal{Y}$ is assumed
to be finite, $\mathcal{U}\left(\mathcal{Y}\right)$ is a finite dimensional
set.
\begin{thm}
The dual to problem (\ref{FINITE}) - (\ref{FINITE-4}) is

\begin{align}
\min\hspace{0.2in} & g-\mathbb{E}\left[u\left(Y\right)\right]\label{FINITE_dual}\\
\mbox{s.t.}\hspace{0.2in} & r\left(s,a\right)+u\left(z\left(s,a\right)\right)\leq g+h\left(s\right)-\sum_{j\in S}P\left(j\,\vert\, s,\, a\right)h\left(j\right), & \forall\left(s,a\right)\in K,\label{FINITE_dual-1}\\
 & g\in\mathbb{R},\, h\in\mathbb{R}^{|S|},\, u\in\mathcal{U}\left(\mathcal{Y}\right).\label{FINITE_dual-2}
\end{align}
Strong duality holds between problem (\ref{FINITE}) - (\ref{FINITE-4})
and problem (\ref{FINITE_dual}) - (\ref{FINITE_dual-2}).\end{thm}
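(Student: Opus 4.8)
The plan is to treat problem (\ref{FINITE}) -- (\ref{FINITE-4}) as an ordinary finite-dimensional linear program in the variable $x\in\mathbb{R}^{|K|}$ and to obtain the dual by the same Lagrangian computation that produced the dual to problem (\ref{LP}) -- (\ref{LP-4}), now with every space finite-dimensional. First I would record the primal data: (\ref{FINITE-1}) is the system of $|S|$ linear equalities $L_0x=0$, (\ref{FINITE-2}) is the single equality $\langle x,1\rangle=1$, and (\ref{FINITE-3}) is the finite family of ``$\geq$'' inequalities indexed by $\eta\in\mathcal{Y}=\mbox{supp}\,Y$, each linear in $x$ since $\mathbb{E}_x[(z(s,a)-\eta)_-]=\sum_{(s,a)\in K}x(s,a)(z(s,a)-\eta)_-$; the passage from the continuum (\ref{DOMINANCE-1}) to this finite family is \cite[Proposition 3.2]{Dentcheva2003}. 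Introduce multipliers $h\in\mathbb{R}^{|S|}$ for (\ref{FINITE-1}), a free scalar $g\in\mathbb{R}$ for (\ref{FINITE-2}), and $\lambda\in\mathbb{R}^{|\mathcal{Y}|}$ with $\lambda\geq0$ for (\ref{FINITE-3}), and form
\[
\vartheta(x,h,g,\lambda)=\langle x,r\rangle+\langle L_0^{*}h,x\rangle+g\,(\langle x,1\rangle-1)+\sum_{\eta\in\mathcal{Y}}\lambda(\eta)\bigl(\mathbb{E}_x[(z-\eta)_-]-\mathbb{E}[(Y-\eta)_-]\bigr),
\]
where $[L_0^{*}h](s,a)=h(s)-\sum_{j\in S}P(j\,|\,s,a)h(j)$.

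Second, I would collect the coefficient of $x(s,a)$ in $\vartheta$; because the primal maximizes over $x\geq0$, the inner supremum $\sup_{x\geq0}\vartheta$ is finite exactly when $r(s,a)+[L_0^{*}h](s,a)+g+\sum_{\eta}\lambda(\eta)(z(s,a)-\eta)_-\leq0$ for all $(s,a)\in K$. Setting $u(\xi)=\sum_{\eta\in\mathcal{Y}}\lambda(\eta)(\xi-\eta)_-$ we have $u\in\mathcal{U}(\mathcal{Y})$ by definition of that cone, every $u\in\mathcal{U}(\mathcal{Y})$ arises this way, $\sum_{\eta}\lambda(\eta)(z(s,a)-\eta)_-=u(z(s,a))$, and $\sum_{\eta}\lambda(\eta)\mathbb{E}[(Y-\eta)_-]=\mathbb{E}[u(Y)]$ by linearity of expectation. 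The dual problem is therefore $\inf\{-g-\mathbb{E}[u(Y)]\}$ subject to $r(s,a)+u(z(s,a))\leq-g-[L_0^{*}h](s,a)$; relabelling the free variables $g\mapsto-g$ and $h\mapsto-h$ (exactly as the $\beta\mapsto-\beta$ step in the general dual) puts this in the form (\ref{FINITE_dual}) -- (\ref{FINITE_dual-2}), and its unconstrained part agrees with the classical average-reward LP dual of \cite[Chapter 8]{Put05}.

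For strong duality I would invoke the fundamental theorem of finite-dimensional linear programming: problem (\ref{FINITE}) -- (\ref{FINITE-4}) is feasible by Assumption \ref{ass:reduction}(a), and its feasible set is bounded because (\ref{FINITE-2}) together with $x\geq0$ confines $x$ to the probability simplex on $K$, so the primal attains a finite optimal value and hence its dual (\ref{FINITE_dual}) -- (\ref{FINITE_dual-2}) is solvable with the same optimal value (alternatively, one may specialize the earlier general solvability theorem). I do not expect a real obstacle here; the only things that need care are the sign conventions --- $g$ free because (\ref{FINITE-2}) is an equality while $\lambda\geq0$ because (\ref{FINITE-3}) is a ``$\geq$'' constraint in a maximization --- and the bookkeeping that reassembles the scalars $\lambda(\eta)$ into the single piecewise-linear increasing concave pricing function $u$.
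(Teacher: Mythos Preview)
Your proposal is correct and follows essentially the same route as the paper: form the Lagrangian with multipliers $h$, $g$, and $\lambda\geq0$, collect the coefficient of each $x(s,a)$, repackage $\sum_{\eta}\lambda(\eta)(\cdot-\eta)_-$ as a single $u\in\mathcal{U}(\mathcal{Y})$, and then flip the signs of the free multipliers $g$ and $h$ to reach the stated form. Your treatment of strong duality (finite LP, feasible primal with feasible region contained in the simplex on $K$, hence finite optimum and dual attainment) is slightly more explicit than the paper's, which simply appeals to finite-dimensional linear programming duality, but the argument is the same in substance.
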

\begin{proof}
Introduce the Lagrangian

\begin{align*}
L\left(x,g,h,\lambda\right)\triangleq & \sum_{\left(s,a\right)\in K}r\left(s,a\right)x\left(s,a\right)+g\left[\sum_{\left(s,a\right)\in K}x\left(s,a\right)-1\right]\\
 & +\sum_{j\in S}h\left(j\right)\left[\sum_{a\in A\left(s\right)}x\left(j,a\right)-\sum_{s\in S}\sum_{a\in A\left(s\right)}P\left(j\mbox{ | }s,a\right)x\left(s,a\right)\right]\\
 & +\sum_{\eta\in\mathcal{Y}}\lambda\left(\eta\right)\left[\left(\sum_{\left(s,a\right)\in K}x\left(s,a\right)\left(z\left(s,a\right)-\eta\right)_{-}-\mathbb{E}\left[\left(Y-\eta\right)_{-}\right]\right)\right].
\end{align*}
Define the increasing concave function

\[
u\left(\xi\right)=\sum_{\eta\in\mathcal{Y}}\lambda\left(\eta\right)\left(\xi-\eta\right)_{-},
\]
then

\begin{align*}
 & \sum_{\eta\in\mathcal{Y}}\lambda\left(\eta\right)\left[\left(\sum_{\left(s,a\right)\in K}x\left(s,a\right)\left[\left(z\left(s,a\right)-\eta\right)_{-}\right]-\mathbb{E}\left[\left(Y-\eta\right)_{-}\right]\right)\right]\\
=\hspace{0.2in} & \sum_{\left(s,a\right)\in K}x\left(s,a\right)\left(u\left(z\left(s,a\right)\right)\right)-\mathbb{E}\left[u\left(Y\right)\right]
\end{align*}
by interchanging finite sums. So, the Lagrangian could also be written
as

\begin{align*}
L\left(x,g,h,u\right)= & \sum_{\left(s,a\right)\in K}r\left(s,a\right)x\left(s,a\right)+g\left[\sum_{\left(s,a\right)\in K}x\left(s,a\right)-1\right]\\
 & +\sum_{j\in S}h\left(j\right)\left[\sum_{a\in A\left(s\right)}x\left(j,a\right)-\sum_{s\in S}\sum_{a\in A\left(s\right)}P\left(j\mbox{ | }s,a\right)x\left(s,a\right)\right]\\
 & +\sum_{\left(s,a\right)\in K}x\left(s,a\right)u\left(z\left(s,a\right)\right)-\mathbb{E}\left[u\left(Y\right)\right],
\end{align*}
for $u\in U$. The dual to problem (\ref{FINITE-4}) - (\ref{FINITE-3})
is defined as

\[
\min_{g\in\mathbb{R},\, h\in\mathbb{R}^{|S|},\, u\in\mathcal{U}\left(\mathcal{Y}\right)}\left\{ \max_{x\geq0}L\left(x,g,h,u\right)\right\} .
\]
Rearranging the Lagrangian gives

\begin{align*}
L\left(x,g,h,u\right)= & \sum_{\left(s,a\right)\in K}x\left(s,a\right)\left[r\left(s,a\right)+g+h\left(s\right)-\sum_{j\in S}P\left(j\mbox{ | }s,\, a\right)h\left(j\right)+u\left(z\left(s,a\right)\right)\right]\\
 & -g-\mathbb{E}\left[u\left(Y\right)\right],
\end{align*}
so that the dual to problem (\ref{FINITE-4}) - (\ref{FINITE-3})
is

\begin{align*}
\min\hspace{0.2in} & -g-\mathbb{E}\left[u\left(Y\right)\right]\\
\mbox{s.t.}\hspace{0.2in} & r\left(s,a\right)+g+h\left(s\right)-\sum_{j\in S}P\left(j\mbox{ | }s,\, a\right)h\left(j\right)+u\left(z\left(s,a\right)\right)\leq0, & \forall\left(s,a\right)\in K,\\
 & g\in\mathbb{R},\, h\in\mathbb{R}^{|S|},\, u\in\mathcal{U}\left(\mathcal{Y}\right).
\end{align*}
Since $g$ and $h$ are unrestricted, take $g=-g$ and $h=-h$ to
get the desired result.
\end{proof}
We used linear programming duality in the preceding proof for illustration.
Alternatively, we could have just applied our general strong duality
result from earlier. It is immediate that problem (\ref{FINITE_dual-1})
- (\ref{FINITE_dual-2}) is the finite-dimensional version of problem
(\ref{LP_dual-6}) - (\ref{LP_dual-8}).

There is no difficulty with the Slater condition for problems (\ref{FINITE-3})
- (\ref{FINITE-4}) and (\ref{FINITE_dual-1}) - (\ref{FINITE_dual-2})
as there is in \cite{Dentcheva2003,Dentcheva2004}. In \cite{Dentcheva2003,Dentcheva2004},
the decision variable in a stochastic program is a random variable
so stochastic dominance constraints are nonlinear. In our case, the
decision variable $x$ is in the space of measures and the dominance
constraints are linear. Linear programming duality does not depend
on the Slater condition.

The development for the discounted case is similar. In terms of discounted
occupation measures $x$, problem (\ref{DISCOUNT-2}) - (\ref{DISCOUNT-1})
is

\begin{align}
\max\hspace{0.2in} & \sum_{\left(s,a\right)\in K}r\left(s,a\right)x\left(s,a\right)\label{FINITE_DISCOUNT}\\
\mbox{s.t.}\hspace{0.2in} & \sum_{a\in A_{s}}x\left(j,a\right)-\sum_{s\in S}\sum_{a\in A\left(s\right)}\gamma\, P\left(j\mbox{ | }s,a\right)x\left(s,a\right)=\alpha\left(j\right), & \forall j\in S,\label{FINITE_DISCOUNT-1}\\
 & \mathbb{E}_{x}\left[\left(z\left(s,a\right)-\eta\right)_{-}\right]\geq\mathbb{E}\left[\left(Y-\eta\right)_{-}\right], & \forall\eta\in\mathcal{Y},\label{FINITE_DISCOUNT-2}\\
 & x\geq0.\label{FINITE_DISCOUNT-3}
\end{align}
We compute the dual to problem (\ref{FINITE_DISCOUNT}) - (\ref{FINITE_DISCOUNT-3})
in the next theorem using the space of utility functions $U$ from
earlier.
\begin{thm}
The dual to problem (\ref{FINITE_DISCOUNT}) - (\ref{FINITE_DISCOUNT-3})
is

\begin{align}
\min\hspace{0.2in} & \sum_{j\in S}\alpha\left(j\right)v\left(j\right)-\mathbb{E}\left[u\left(Y\right)\right]\label{FINITE_DISCOUNT_dual}\\
\mbox{s.t.}\hspace{0.2in} & v\left(s\right)-\sum_{s\in S}\sum_{a\in A_{s}}\gamma\, P\left(j\,\vert\, s,a\right)v\left(j\right)\geq r\left(s,a\right)+u\left(z\left(s,a\right)\right), & \forall\left(s,a\right)\in K,\label{FINITE_DISCOUNT_dual-1}\\
 & v\in\mathbb{R}^{|S|},\, u\in\mathcal{U}\left(\mathcal{Y}\right).\label{FINITE_DISCOUNT_dual-2}
\end{align}
Strong duality holds between problem (\ref{FINITE_DISCOUNT}) - (\ref{FINITE_DISCOUNT-3})
and problem (\ref{FINITE_DISCOUNT_dual}) - (\ref{FINITE_DISCOUNT_dual-2}).
\end{thm}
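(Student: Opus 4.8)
The plan is to mirror the Lagrangian computation carried out for the average-reward finite-dimensional problem. First I would form the Lagrangian of the primal LP (\ref{FINITE_DISCOUNT})--(\ref{FINITE_DISCOUNT-3}), attaching a free multiplier $v\in\mathbb{R}^{|S|}$ to the $|S|$ flow-balance equalities (\ref{FINITE_DISCOUNT-1}) and a nonnegative multiplier $\lambda\in\mathbb{R}^{|\mathcal{Y}|}$, $\lambda\geq0$, to the dominance inequalities (\ref{FINITE_DISCOUNT-2}), keeping $x\geq0$ as the explicit sign constraint. Writing the equality term as $\sum_{j}v(j)\bigl[\alpha(j)-\sum_{a}x(j,a)+\gamma\sum_{s,a}P(j\mid s,a)x(s,a)\bigr]$ and grouping, the coefficient of each $x(s,a)$ becomes $r(s,a)-v(s)+\gamma\sum_{j}P(j\mid s,a)v(j)+\sum_{\eta\in\mathcal{Y}}\lambda(\eta)(z(s,a)-\eta)_{-}$, while the $x$-free remainder is $\sum_{j}\alpha(j)v(j)-\sum_{\eta\in\mathcal{Y}}\lambda(\eta)\mathbb{E}[(Y-\eta)_{-}]$.

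Next I would substitute the piecewise-linear utility $u(\xi)=\sum_{\eta\in\mathcal{Y}}\lambda(\eta)(\xi-\eta)_{-}$ exactly as in the average-reward case, so that $\sum_{\eta}\lambda(\eta)(z(s,a)-\eta)_{-}=u(z(s,a))$ and $\sum_{\eta}\lambda(\eta)\mathbb{E}[(Y-\eta)_{-}]=\mathbb{E}[u(Y)]$ after interchanging finite sums and using linearity of expectation. Since the Lagrangian depends on $\lambda$ only through these linear functionals, and $\mathcal{U}(\mathcal{Y})$ is precisely the image of the orthant $\{\lambda\geq0\}$ under $\lambda\mapsto u$, minimizing over $\lambda\geq0$ is the same as minimizing over $u\in\mathcal{U}(\mathcal{Y})$. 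The dual is $\min_{v,u}\max_{x\geq0}L$; the inner maximum over $x\geq0$ is finite (and equals the $x$-free remainder) exactly when every coefficient of $x(s,a)$ is nonpositive, i.e.\ $r(s,a)+u(z(s,a))\leq v(s)-\gamma\sum_{j}P(j\mid s,a)v(j)$ for all $(s,a)\in K$. This is constraint (\ref{FINITE_DISCOUNT_dual-1}) and the remainder is objective (\ref{FINITE_DISCOUNT_dual}); with this choice of sign for $v$ no sign reversal is needed, because the flow equation already carries the initial distribution $\alpha$ on its right-hand side.

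For the strong-duality claim I would note that both problems are ordinary finite-dimensional linear programs, so it suffices to exhibit a feasible point for each. The primal is feasible by the discounted analog of the consistency hypothesis, and the dual is feasible with $u\equiv0\in\mathcal{U}(\mathcal{Y})$ (take $\lambda=0$) and $v\equiv0$, since $r\leq0$ forces $r(s,a)+0\leq0=v(s)-\gamma\sum_{j}P(j\mid s,a)v(j)$. Finite linear programming duality then yields equality of the optimal values together with attainment on both sides, with no Slater condition required; alternatively one may simply specialize the general strong-duality theorem of Section~4 to finite $S$, $A$, and $\mathcal{Y}$. The only place demanding care is the $\lambda\leftrightarrow u$ reparametrization and the bookkeeping of the $x(s,a)$ coefficients --- in particular keeping the discount factor $\gamma$ attached to the transition term and not to $v(s)$ --- but this is routine rather than a genuine obstacle.
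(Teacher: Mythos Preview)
Your proposal is correct and follows precisely the route the paper intends: the paper omits the proof of this theorem, saying only that ``the development for the discounted case is similar'' to the average-reward Lagrangian computation you reproduce. Your sign convention for the flow multiplier $v$ (dualizing $\alpha(j)-[\text{LHS}]=0$) avoids the final $h\mapsto -h$ substitution the paper performs in the average-reward proof, but this is a cosmetic difference; the derivation of the $x(s,a)$ coefficients, the $\lambda\leftrightarrow u$ reparametrization via $\mathcal{U}(\mathcal{Y})$, and the appeal to finite LP duality (or equivalently specialization of the Section~4 result) are exactly what the paper has in mind.
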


\section{Portfolio optimization}

We use an infinite horizon discounted portfolio optimization problem
to illustrate our ideas in this section. A single period portfolio
optimization with stochastic dominance constraints is analyzed in
\cite{Dentcheva06}. Specifically, the model in \cite{Dentcheva06}
puts a stochastic dominance constraint on the return rate of a portfolio
allocation. We use this model as our motivation for the dynamic setting
and put a stochastic dominance constraint on the discounted infinite
horizon return rate.

Suppose there are $n$ assets whose prices evolve according to a discrete
time Markov chain. We can include a risk-less asset with a constant
return rate in this set. The asset prices at time $t$ are

\[
p_{t}=\left(p_{t}\left(1\right),\ldots,p_{t}\left(n\right)\right)\in\mathbb{R}^{n},
\]
where $ $$p_{t}\left(i\right)$ is the price per share of asset $i$
at time $t$. The portfolio at time $t$ is captured by
\[
x_{t}=\left(x_{t}\left(1\right),\ldots,x_{t}\left(n\right)\right)\in\mathbb{R}^{n},
\]
where $x_{t}\left(i\right)$ is the quantity of shares held of asset
$i$ at time $t$. For a cleaner model, we just treat each $x_{t}\left(i\right)$
as a continuous decision variable. We require $\sum_{i=1}^{n}x_{t}\left(i\right)=1$
and $x_{t}\geq0$ for all $t\geq0$, there is no shorting. The total
wealth at time $t$ is then $\langle p_{t},x_{t}\rangle$.

At each time $t\geq0$, the investor observes the current prices of
the assets and then updates portfolio positions subject to transaction
costs before new prices are realized. Let $a_{t}\subset\mathbb{R}^{n}$
be the buying and selling decisions at time $t$, where $a_{t}\left(i\right)$
is the total change in the number of shares held of asset $i$. Define

\begin{align*}
A\left(p,x\right)\triangleq & \left\{ a\in\mathbb{R}^{n}\mbox{ : }x\left(i\right)+a\left(i\right)\geq0\mbox{ for all }i=1,\ldots,n,\right.\\
 & \left.\sum_{i=1}^{n}p\left(i\right)a\left(i\right)=0\right\} ,
\end{align*}
to be the set of feasible reallocations given prices and holdings
$x$. The constraint $\sum_{i=1}^{n}p\left(i\right)a\left(i\right)=0$
requires the total change in wealth from buying and selling decisions
to be zero in any period. The system dynamic for portfolio positions
is then

\begin{align}
x_{t}\left(t+1\right)=x_{t}\left(i\right)+a_{t}\left(i\right),\hspace{0.2in} & i=1,\ldots,n,\, t\geq0.\label{PORTFOLIO}
\end{align}
The transaction costs $c\mbox{ : }A\rightarrow\mathbb{R}$ are defined
to be
\[
c\left(a\right)\triangleq\sum_{i=1}^{n}a_{t}\left(i\right)^{2},
\]
this cost function is a moment on $S\times A$.

The overall return rate between time $t$ and $t+1$ is
\[
z\left(p_{t},x_{t};\, p_{t+1},x_{t+1}\right)\triangleq\frac{\langle p_{t+1},x_{t+1}\rangle-\langle p_{t},x_{t}\rangle}{\langle p_{t},x_{t}\rangle}.
\]
We make the reasonable assumption that $z\left(p_{t},x_{t};\, p_{t+1},x_{t+1}\right)$
is bounded for this example.

We want to minimize discounted transaction costs
\[
C\left(\pi,\nu\right)\triangleq\mathbb{E}_{\nu}^{\pi}\left[\sum_{t\geq0}\delta^{t}c\left(a_{t}\right)\right]
\]
subject to a stochastic dominance constraint on the discounted return
rate. Define

\[
Z_{\eta}\left(\pi,\nu\right)\triangleq\mathbb{E}_{\nu}^{\pi}\left[\sum_{t=0}^{\infty}\delta^{t}\left(z\left(p_{t},x_{t};\, p_{t+1},x_{t+1}\right)-\eta\right)_{-}\right]
\]
to be the expected discounted shortfall in relative returns at level
$\eta$. We introduce a benchmark $Y$ for the discounted return rate,
and we suppose the support of $Y$ is bounded within $\left[a,b\right]$.
In this example, the benchmark can be taken as any market index.

We absorb the system dynamic (\ref{PORTFOLIO}) into a transition
kernel $Q$. Our resulting portfolio optimization problem is then
\begin{align}
\max_{\pi\in\Pi}\hspace{0.2in} & -C\left(\pi,\nu\right)\label{PORTFOLIO-1}\\
\mbox{s.t.}\hspace{0.2in} & Z_{\eta}\left(\pi,\nu\right)\geq\mathbb{E}\left[\left(Y-\eta\right)_{-}\right], & \eta\in\left[a,b\right].\label{PORTFOLIO-2}
\end{align}
In the linear programming formulation of (\ref{PORTFOLIO-1}) - (\ref{PORTFOLIO-2}),
we simply augment the state space and consider occupation measures
over sequences

\[
\left(p_{t},x_{t},a_{t};\, p_{t+1},x_{t+1},a_{t+1}\right)
\]
to correctly compute $z$.

\section{Conclusion}

We have shown how to use stochastic dominance constraints in infinite
horizon MDPs. Convex analytic methods establish that stochastic dominance
constrained MDPs can be solved via linear programming, and have corresponding
dual linear programming problems. Conditions are given for strong
duality to hold between these two linear programs. Utility functions
appear in the dual as pricing variables corresponding to the stochastic
dominance constraints. This result has intuitive appeal, since our
stochastic dominance constraints are defined in terms of utility functions,
and parallels earlier results \cite{Dentcheva2003,Dentcheva2004,Dentcheva2008}.
Our results are shown to be extendable to many types of stochastic
dominance constraints, particularly multivariate ones.

There are three main directions for our future work. First, we will
consider efficient strategies for computing the optimal policy to
stochastic dominance constrained MDPs. Second, we would like explore
other methods for modeling risk in MDPs using convex analytic methods.
Specifically, we are interested in solving MDPs with convex risk measures
and chance constraints with ``static'' optimization problems as
we have done here. Third, as suggested by the portfolio example, we
will consider online data-driven optimization for the stochastic dominance-constrained
MDPs in this paper. The transition probabilities of underlying MDPs
are not known in practice and must be learned online.

\bibliographystyle{plain}
\bibliography{sdcmdp}

\begin{thebibliography}{10}

\bibitem{Alt99}
Eitan Altman.
\newblock {\em Constrained Markov Decision Processes}.
\newblock Chapman \& Hall/CRC, 1999.

\bibitem{AN87}
Edward~J. Anderson and Peter Nash.
\newblock {\em Linear Programming in Infinite-Dimensional Spaces}.
\newblock John Wiley \& Sons, 1987.

\bibitem{ABF+93}
Aristotle Arapostathis, Vivek~S. Borkar, Emmanuel Fern\'{a}ndez-Gaucherand,
  Mrinal~K. Ghosh, and Steven~I. Marcus.
\newblock Discrete-time controlled markov processes with average cost
  criterion: a survey.
\newblock {\em SIAM J. Control Optim.}, 31(2):282--344, March 1993.

\bibitem{BJ10}
V.~Borkar and R.~Jain.
\newblock {Risk-constrained Markov Decision Processes}.
\newblock In {\em Proc. of the IEEE Control and Decision Conference}, December
  2010.

\bibitem{Bor88}
Vivek~S. Borkar.
\newblock {A Convex Analytic Approach to Markov Decision Processes}.
\newblock {\em Probability Theory and Related Fields}, 78(4):583--602, 1988.

\bibitem{Bor02}
Vivek~S. Borkar.
\newblock {Convex Analytic Methods in Markov Decision Processes}.
\newblock In E.~A. Feinberg, A.~Shwartz, and F.~S. Hillier, editors, {\em
  Handbook of Markov Decision Processes}, volume~40 of {\em International
  Series in Operations Research \& Management Science}, pages 347--375.
  Springer US, 2002.

\bibitem{Bronshtein1978}
E.~M. Bronshtein.
\newblock Extremal convex functions.
\newblock {\em Sibirskii Matematicheskii Zhurnal}, 19(1):10--18, 1978.

\bibitem{FR06}
Daniela~Pucci de~Farias and Benjamin Van~Roy.
\newblock A cost-shaping linear program for average-cost approximate dynamic
  programming with performance guarantees.
\newblock {\em Math. Oper. Res.}, 31(3):597--620, August 2006.

\bibitem{Dentcheva2003}
Darinka Dentcheva and Andrzej Ruszczy{\'n}ski.
\newblock Optimization with stochastic dominance constraints.
\newblock {\em SIAM Journal of Optimization}, 14(2):548--566, 2003.

\bibitem{Dentcheva2004}
Darinka Dentcheva and Andrzej Ruszczy{\'n}ski.
\newblock Optimality and duality theory for stochastic optimization problems
  with nonlinear dominance constraints.
\newblock {\em Mathematical Programming}, 99:329--350, 2004.

\bibitem{Dentcheva06}
Darinka Dentcheva and Andrzej Ruszczy{\'n}ski.
\newblock Portfolio optimization with stochastic dominance constraints.
\newblock {\em Journal of Banking and Finance}, 30(2):433 -- 451, 2006.

\bibitem{Dentcheva2008}
Darinka Dentcheva and Andrzej Ruszczy\'{n}ski.
\newblock Stochastic dynamic optimization with discounted stochastic dominance
  constraints.
\newblock {\em SIAM Journal of Control and Optimization}, 47(5):2540--2556,
  2008.

\bibitem{Dentcheva2009}
Darinka Dentcheva and Andrzej Ruszczy\'{n}ski.
\newblock Optimization with multivariate stochastic dominance constraints.
\newblock {\em Mathematical Programming}, 117:111--127, 2009.

\bibitem{DFM09}
V.~V. {Desai}, V.~F. {Farias}, and C.~C. {Moallemi}.
\newblock {Approximate Dynamic Programming via a Smoothed Linear Program}.
\newblock {\em ArXiv e-prints}, August 2009.

\bibitem{Dynkin_1979}
E.~B. Dynkin and A.~A. Yushkevich.
\newblock {\em Controlled Markov Processes}.
\newblock Springer-Verlag, Berlin, 1979.

\bibitem{FR03}
D.~P.~De Farias and B.~Van Roy.
\newblock The linear programming approach to approximate dynamic programming.
\newblock {\em Operations Research}, 51(6):850--865, 11 2003.

\bibitem{FR04}
Daniela Pucci~de Farias and Benjamin~Van Roy.
\newblock On constraint sampling in the linear programming approach to
  approximate dynamic programming.
\newblock {\em Mathematics of Operations Research}, 29(3):462--478, 08 2004.

\bibitem{FS95}
Eugene~A. Feinberg and Adam Shwartz.
\newblock Constrained {M}arkov decision models with weighted discounted
  rewards.
\newblock {\em Mathematics of Operations Research}, 20(2):302--320, 05 1995.

\bibitem{FS96}
Eugene~A. Feinberg and Adam Shwartz.
\newblock Constrained discounted dynamic programming.
\newblock {\em Mathematics of Operations Research}, 21(4):922--945, 11 1996.

\bibitem{Filar1989}
Jerzy~A. Filar, L.~C.~M. Kallenberg, and Huey-Miin Lee.
\newblock Variance-penalized {M}arkov decision processes.
\newblock {\em Mathematics of Operations Research}, 14(1):147--161, 1989.

\bibitem{HG98}
Onesimo Hernandez-Lerma and Juan Gonzalez-Hernandez.
\newblock Infinite linear programming and multichain {M}arkov control processes
  in uncountable spaces.
\newblock {\em SIAM Journal on Control and Optimization}, 36(1):313--335, 1998.

\bibitem{HG00}
On{\'e}simo Hern{\'a}ndez-Lerma and Juan Gonz{\'a}lez-Hern{\'a}ndez.
\newblock Constrained {M}arkov control processes in {B}orel spaces: the
  discounted case.
\newblock {\em Mathematical Methods of Operations Research}, 52:271--285, 2000.
\newblock 10.1007/s001860000071.

\bibitem{HGL03}
On{\'e}simo Hern\'{a}ndez-Lerma, Juan Gonz\'{a}lez-Hern\'{a}ndez, and
  Raquiel~R. L\'{o}pez-Mart\'{\i}nez.
\newblock Constrained average cost {M}arkov control processes in {B}orel
  spaces.
\newblock {\em SIAM J. Control Optim.}, 42(2):442--468, February 2003.

\bibitem{hernandez-lerma98}
Onesimo Hernandez-Lerma and Jean~B. Lasserre.
\newblock Approximation schemes for infinite linear programs.
\newblock {\em SIAM Journal on Optimization}, 8(4):973--988, 1998.

\bibitem{HL02}
On{\'e}simo Hern{\'a}ndez-Lerma, Jean~B. Lasserre, Eugene~A. Feinberg, and Adam
  Shwartz.
\newblock {\em The Linear Programming Approach}, volume~40, pages 377--407.
\newblock Springer US, 2002.

\bibitem{HL96}
Onesimo Hernandez-Lerma and Jean~Bernard Lasserre.
\newblock {\em Discrete-Time Markov Control Processes: Basic Optimality
  Criteria}.
\newblock Springer-Verlag New York, Inc., 1996.

\bibitem{HL99}
Onesimo Hernandez-Lerma and Jean~Bernard Lasserre.
\newblock {\em Further Topics On Discrete-Time Markov Control Processes}.
\newblock Springer-Verlag New York, Inc., 1999.

\bibitem{Johansen1974}
Soren Johansen.
\newblock The extremal convex functions.
\newblock {\em Math. Scand.}, 34:61--68, 1974.

\bibitem{KKY06}
Yoshinobu Kadota, Masami Kurano, and Masami Yasuda.
\newblock Discounted {M}arkov decision processes with utility constraints.
\newblock {\em Computers \& Mathematics with Applications}, 51(2):279--284,
  2006.

\bibitem{Kal80}
L.~C.~M. Kallenberg.
\newblock Linear programming and finite {M}arkovian control problems.
\newblock {\em Mathematisch Centre Tracts}, 148:1--245, 1983.

\bibitem{mendiondo08}
Marta~Susana Mendiondo and Richard~H. Stockbridge.
\newblock Approximation of infinite-dimensional linear programming problems
  which arise in stochastic control.
\newblock {\em SIAM Journal on Control and Optimization}, 36(4):1448--1472,
  1998.

\bibitem{MH10}
Armando Mendoza-P{\'e}rez and On{\'e}simo Hern{\'a}ndez-Lerma.
\newblock Markov control processes with pathwise constraints.
\newblock {\em Math. Methods of Operations Research}, 71(3):477--502, 2010.

\bibitem{Muller2002}
Alfred Muller and Dietrich Stoyan.
\newblock {\em Comparison Methods for Stochastic Models and Risks}.
\newblock John Wiley and Sons, Inc., 2002.

\bibitem{Put05}
Martin~L. Puterman.
\newblock {\em Markov Decision Processes: Discrete Stochastic Dynamic
  Programming}.
\newblock John Wiley \& Sons, 2005.

\bibitem{Rus10}
Andrzej Ruszczy{\'n}ski.
\newblock Risk-averse dynamic programming for {M}arkov decision processes.
\newblock {\em Mathematical Programming}, 125:235--261, 2010.

\bibitem{Shaked2007}
Moshe Shaked and J.~George Shanthikumar.
\newblock {\em Stochastic Orders}.
\newblock Springer, 2007.

\bibitem{Sobel1994}
Matthew~J. Sobel.
\newblock Mean-variance tradeoffs in an undiscounted {MDP}.
\newblock {\em Operations Research}, 42(1):175--183, 1994.

\bibitem{Yosida80}
Kosaku Yosida.
\newblock {\em Functional Analysis}.
\newblock Springer Berlin / Heidelberg, 1980.

\end{thebibliography}

\end{document}